\newif\ifsiamart
\newcommand{\norm}[1]{\left\lVert#1\right\rVert}
\DeclarePairedDelimiter\myip{\langle}{\rangle}
\DeclareMathOperator{\e}{e}
\DeclareMathOperator*{\argmin}{arg\,min}
\renewcommand{\d}{\mathrm d}
\newcommand{\N}{\mathbb{N}}
\newcommand{\R}{\mathbb R}
\newcommand{\real}{\R}
\newcommand{\T}{\mathbb T}
\newcommand{\torus}{\T}
\newcommand{\nat}{\N}
\newcommand{\proba}{\mathbb P}
\newcommand{\expect}{\mathbb{E}}
\newcommand{\var}{\mathbb{V}}
\newcommand{\ee}{\mathrm e}
\renewcommand{\leq}{\leqslant}
\renewcommand{\geq}{\geqslant}
\renewcommand{\le}{\leqslant}
\renewcommand{\ge}{\geqslant}
\newcommand{\cbdg}[1]{C_{\rm{BDG}}^{(#1)}}
\newtheorem{remark}[theorem]{Remark}
\newtheorem{assumption}{Assumption}
\theoremstyle{plain}
\newtheorem{remark}[theorem]{Remark}
\newtheorem{assumption}{Assumption}
\crefname{lemma}{Lemma}{Lemmas}
\crefname{remark}{Remark}{Remarks}
\crefname{assumption}{Assumption}{Assumptions}
\crefname{proposition}{Proposition}{Propositions}
\crefname{section}{Section}{Sections}
\crefname{subsection}{Subsection}{Subsections}
\crefname{equation}{}{}
\Crefname{equation}{Equation}{Equations}
\newlist{lemmaenum}{enumerate}{3}
\setlist[lemmaenum]{label=(\alph*),ref=\,(\alph*)}
\crefname{lemmaenum}{Lemma}{Lemmas}
\newlist{assumpenum}{enumerate}{5}
\setlist[assumpenum]{label=(\alph*), font={\bfseries}}
\newlist{auxenum}{enumerate}{2}
\setlist[auxenum]{label=(\alph*),ref=(\alph*)}
\crefname{auxenumi}{Item}{Items}
\crefname{enumi}{}{}
\crefname{equation}{}{}
\crefname{assumpenumi}{}{}
\crefname{assumpenumii}{}{}
\Crefname{assumpenumi}{Assumption}{Assumptions}
\Crefname{assumpenumii}{Assumption}{Assumptions}
\Crefname{assumpenumii}{Assumption}{Assumptions}
\Crefname{lemmaenumi}{Part}{Parts}
\Crefname{figure}{Figure}{Figures}
\numberwithin{equation}{section}
\numberwithin{theorem}{section}
\let\oldparagraph=\paragraph
\renewcommand\paragraph[1]{\oldparagraph{#1.}}
\newcommand{\qapprox}[1]{\hat{q}_{#1}}
\newcommand{\qapproxa}[1]{\hat{q}_{#1}^\alpha}
\definecolor{darkred}{rgb}{.7,0,0}
\definecolor{darkgreen}{rgb}{.1,.7,0}
\title{Dynamical reweighting for estimation of fluctuation formulas}
    \author{%
        Raphaël Gastaldello\thanks{%
        CNRS 3522 Bézout, France \& CERMICS, ENPC, Institut Polytechnique de Paris, Marne-la-Vallée, France \& MATHERIALS, Inria Paris, France
            (\email{raphael.gastaldello@enpc.fr}).
        } \and
        Gabriel Stoltz\thanks{%
        CERMICS, ENPC, Institut Polytechnique de Paris, Marne-la-Vallée, France \& MATHERIALS, Inria Paris, France (\email{gabriel.stoltz@enpc.fr}).
        } \and
        Urbain Vaes\thanks{%
            MATHERIALS, Inria Paris, France \& CERMICS, ENPC, Institut Polytechnique de Paris, Marne-la-Vallée, France (\email{urbain.vaes@inria.fr})
        }
    }
    \author[1,2]{R. Gastaldello$^{a,}$}
    \author[1,2]{G. Stoltz$^{b,}$}
    \author[2,1]{U. Vaes $^{c,}$}
    \affil[ ]{\footnotesize $^a$\email{raphael.gastaldello@enpc.fr},
    $^b$\email{gabriel.stoltz@enpc.fr},
    $^c$\email{urbain.vaes@inria.fr}}
    \affil[1]{\footnotesize CERMICS, \'Ecole nationale des Ponts et Chaussées, France}
    \affil[2]{\footnotesize MATHERIALS project-team, Inria Paris, France}
    \date{\today}
\begin{document}
\maketitle
\begin{abstract}
    We propose a variance reduction method for calculating transport coefficients in molecular dynamics using an importance sampling method via Girsanov's theorem applied to Green--Kubo's formula. We optimize the magnitude of the perturbation applied to the reference dynamics by means of a scalar parameter~$\alpha$ and propose an asymptotic analysis to fully characterize the long-time behavior in order to evaluate the possible variance reduction. Theoretical results corroborated by numerical results show that this method allows for some reduction in variance, although rather modest in most situations.
\end{abstract}

\begin{keywords}
    Variance reduction,
    Green--Kubo formula,
    Girsanov theorem
\end{keywords}

\begin{AMS}
    65C40, 
    82C31,  	
    82C05,  	
    82C70  	
\end{AMS}

\section{Introduction}%
\label{sec:Introduction}
Molecular dynamics has been developing rapidly for over 70 years~\cite{battimelli2020computer}. It relies on statistical physics, which offers a path to understand the macroscopic behavior of matter from its microscopic description. Despite severe constraints in terms of time and space scales, it has proved to be an extremely powerful tool for carrying out \emph{in silico} experiments, testing physical theories on a computer and also helping to make up for the absence of analytical theories. Nowadays, numerical simulations are essential for understanding the macroscopic behaviour of matter, particularly when experiments are impossible or costly, such as in high-pressure or high-temperature regimes, or when studying as yet unknown or unsynthesised materials especially for electronic and biological systems on the nanometric scale. Another major objective of molecular simulation is to calculate macroscopic quantities and thermodynamic properties, providing quantitative information about molecular systems. We refer for instance to~\cite{allen2017computer,frenkel2023understanding,tuckerman2023statistical} for a large panel of numerical methods in statistical physics and molecular simulation and~\cite{leimkuhler2015molecular,stoltz2010free} for a mathematical perspective.

One of the aims of molecular dynamics is to understand the dynamical properties of matter. Transport coefficients such as those governing particle diffusion (self-diffusion), heat conduction (thermal conductivity), or momentum transport (shear viscosity), are examples of such properties. They determine the response of a system to an external perturbation; see~\cite{j2007statistical,todd2017nonequilibrium} for an general overview and~\cite[Section~5]{LS} for a mathematical perspective on this subject. Once these coefficients have been estimated through molecular simulation, they can be fed to macroscopic evolution equations like the Navier--Stokes equation.

A standard method for computing transport coefficients, based on molecular dynamics, utilizes the well-known Green–Kubo formula~\cite{green1952markoff,kubo1957statistical}. This formula expresses transport coefficients as integrated time-correlations of relevant observables for the system at equilibrium. Another approach, relying on linear response theory, involves directly simulating non-equilibrium perturbations and measuring the resulting average response, a method known as non-equilibrium molecular dynamics (NEMD).

However, estimators derived from these methods often suffer from significant statistical errors, as discussed in~\cite{Sto1}. Achieving convergence often demands simulating long trajectories, incurring substantial computational costs. Various variance reduction techniques can be used for systems at equilibrium, in particular importance sampling~\cite{chak2023optimalimportancesamplingoverdamped} and control variates methods~\cite{mira2013zero,oates2017control}. Most variance reduction techniques cannot be used as such for nonequilibrium systems, and therefore require a dedicated treatment, see for instance~\cite{roussel2019,Renato&Gab,blassel2024fixing,pavliotis2024neuralnetworkapproachesvariance}.

Concerning importance sampling, one has to resort to dynamical versions where a bias is introduced into the dynamics, and corrected for by some exponential weight based on Girsanov's formula. This is known as ``dynamical reweighting'' or ``Girsanov reweightings'' in the theoretical chemistry literature. For example, in~\cite{chodera2011dynamical} this method is used to compute time-correlation functions, and the references~\cite{DHK,donati2018girsanov,donati2022review} present how to implement the reweighting and use it to simulate proteins, or accelerate the exploration of the phase space. In all cases, the aim is to improve the computation of path ensemble averages.

\paragraph{Objectives of this work}%
The present work aims to study and assess the efficiency of the particular method described in~\cite{DHK,donati2018girsanov}
when used as a variance reduction technique for the calculation of transport coefficients.
Mathematically, these coefficients are expressed as integrated (auto)correlations in the Green--Kubo formalism
and can therefore be reformulated as ensemble averages with respect to the path measure associated with the stochastic dynamics.
In the context of non-degenerate diffusive dynamics,
the key idea of the method we study is to add a drift term in order to bias the dynamics:
\begin{align}
    \label{eq:biased}
    \d q_t^\alpha = b(q_t^\alpha)\,\d t + \alpha \sigma(q_t^\alpha)u(q_t^\alpha)\,\d t + \sigma(q_t^\alpha)\,\d W_t.
\end{align}
This modified dynamics induces a path measure that is different from that associated with the reference dynamics~($\alpha = 0$).
Therefore, for consistency,
statistical estimators based on the modified dynamics~\eqref{eq:biased} must include
Girsanov weights,
which represent the likelihood ratio between the reference and modified path probability measures.
In this work, we wish to understand whether
employing a perturbed dynamics of the type~\eqref{eq:biased} instead of the reference dynamics
can be useful for the calculation of transport coefficients through Green–Kubo’s formula.
\paragraph{Our contributions}
The paper makes the following contributions.
\begin{itemize}
    \item
        We first consider the following stochastic differential equation (SDE),
        \begin{align}
            \label{eq:reference-dynamics}
            \d q_t = b(q_t)\,\d t + \sigma(q_t)\,\d W_t.
        \end{align}
        In order to motivate this study,
        we prove for this reference dynamics that the variance of the Green--Kubo estimator is proportional to the integration time~$T$,
        in the limit as~$T$ tends to infinity (see~\cref{proposition:asymptotic_variance_gk}).
    \item
        We then consider the dynamics~\eqref{eq:biased}
        incorporating an additional drift term $\alpha \sigma u$ compared to~\eqref{eq:reference-dynamics},
        with~$\alpha$ a scalar and $u$ a vector field.
        After defining a weighted Green--Kubo-type estimator in terms of the solution to this equation,
        we show that, for any fixed time $T$,
        there exists a unique $\alpha$ that minimizes the variance of the weighted Green--Kubo's estimator (see~\cref{prop:differentiation}).
    \item
        Finally, we characterize the asymptotic behavior,
        as the integration time~$T$ tends to $+\infty$,
        of the optimal value of $\alpha$ and of the amount of variance reduction obtained with this asymptotically optimal value.
\end{itemize}
We emphasize that the main contribution of this work is the asymptotic analysis of the new estimator for the Green--Kubo formula. We however also explore the non-asymptotic regime through numerical simulations. The so-obtained results are quantitatively similar to those predicted by the asymptotic analysis, therefore achieving only a modest variance reduction. The development of numerical methods based on Girsanov's theorem seems to be more effective for the Markov state models considered in~\cite{DHK}.

\paragraph{Outline of the paper}
In~\cref{sec:replin-and-GK},
we present the general mathematical context of the computation of transport coefficients through the Green--Kubo estimator,
and study the scaling of the variance of this estimator with respect to the integration time.
In~\cref{sec:variance-reduction},
we present and study the dynamical reweighting approach.
Specifically,
we apply Girsanov's theorem in order to construct a new Green--Kubo-like estimator based on the biased dynamics~\eqref{eq:biased} and incorporating Girsanov weights,
depending on the parameter $\alpha$.
We show that there exists a unique value of this parameter which minimizes the estimator variance,
and give its scaling with respect to the integration time. 
In~\cref{sec:numerical-results} we illustrate our main results,
in particular those concerning the amount of variance reduction,
with numerical experiments.
Finally, \cref{sec:conclusion} is reserved for discussions and perspectives for future work.
\section{Mathematical framework}
\label{sec:replin-and-GK}
We first present in~\cref{sec:ref_dyn} the general framework of our study, including a reference stochastic differential equation (SDE)
to describe the evolution of the system. We then present in~\cref{sec:transport_coefficients} the celebrated Green--Kubo formula for the computation of transport coefficients,
and provide a novel asymptotic quantification of the variance of the associated estimator.
\subsection{General setting}
\label{sec:ref_dyn}
We first present the general setting of our study and then specify the analysis to the overdamped Langevin dynamics.
We consider a time-homogeneous SDE defined on the state-space $\mathcal{X}$, where $\mathcal{X}$ is typically either the entire space $\real^d$ or a bounded domain with periodic boundary conditions $\torus^d$ (with $\torus = \real / \mathbb{Z}$ representing the one-dimensional torus):
\begin{align}
    \label{eq:dyn_brow}
    \d q_t = b(q_t)\,\d t + \sigma(q_t)\,\d W_t.
\end{align}
Here
$W_t \in  \real^m$ is a standard Brownian motion,
and the functions $b \colon\mathcal{X} \mapsto \real^d$ and $\sigma \colon\mathcal{X} \mapsto \real^{d\times m}$
are the drift and diffusion coefficients.
We introduce the following assumption for simplicity.
\begin{assumption}
    \label{assumption:ex_mes_inv}
    The state space~$\mathcal{X}$ is the d-dimensional torus~$\torus^d$, the functions~$b\colon \real^d \to \real^d$ and~$\sigma\colon \real^d \to \real^{d \times m}$ are~$C^\infty$, the diffusion matrix~$\sigma \sigma^\top$ is positive definite and uniformly bounded from below over $\torus^d$.
    Consequently, the dynamics~\eqref{eq:dyn_brow} admits a unique invariant probability measure~$\mu$.
\end{assumption}
\begin{remark}
    The existence of an invariant probability measure mentioned in~\cref{assumption:ex_mes_inv} is a consequence of the non-degeneracy of the matrix~$\sigma \sigma^\top$,
    which is reflected in the ellipticity of the infinitesimal generator associated to~\eqref{eq:dyn_brow}.
    A variety of techniques can be employed to show this,
    such as as those based on the classical Doeblin minorization condition.
    See~\cite[Theorem 6.16]{pavliotis2008multiscale} for an ergodicity result that applies directly to our setting, 
    or~\cite{arnold1987unique} for a more general result which also applies to degenerate diffusions.
\end{remark}

Note that \cref{assumption:ex_mes_inv} guarantees the existence and uniqueness of a strong solution of~\eqref{eq:dyn_brow}.
In the sequel, $\langle\cdot,\cdot\rangle$ and $\|\cdot\|$ refer respectively to the inner product and its associated norm in~$L^2(\mu)$ for the  probability measure $\mu$ from~\cref{assumption:ex_mes_inv},
namely
\[
    \forall f,g \in L^2(\mu),\qquad \langle f,g\rangle = \int_{\torus^d} fg\,\d \mu.
\]
We also introduce the projection operator
\begin{align}
    \Pi \varphi = \varphi - \expect_{\mu}[\varphi],
\end{align}
where $\expect_{\mu}[\varphi]$ is the expectation of a function with respect to $\mu$. We denote the Hilbert space
\[
    L^2_0(\mu) = \left\{f \in L^2(\mu) \ \middle| \ \int_{\torus^d} f\,\d\mu = 0 \right\} = \Pi L^2(\mu),
\]
which is the subspace of functions of $L^2(\mu)$ with zero expectation with respect to~$\mu$.
The infinitesimal generator associated to the SDE \eqref{eq:dyn_brow} is given by the following differential operator:
\[
    \mathcal{L} = b^\top \nabla + \frac12 \sigma\sigma^\top : \nabla^2,
\]
where $:$ denotes the Frobenius inner product, and $\nabla ^2$ is the Hessian operator. More explicitly, for a given $C^\infty$ test function $\phi : \real^d \rightarrow \real$, the operator $\mathcal{L}$ acts as
\[
    \mathcal{L}\phi = \sum_{i = 1}^{d}b_i \frac{\partial\phi}{\partial x_i} + \frac12\sum_{i,j = 1}^{d}\sum_{k = 1}^{m}\sigma_{i,k}\sigma_{j,k}\frac{\partial^2\phi}{\partial x_i\partial x_j}.
\]
The Markov semigroup of operators $t \mapsto \operatorname{e}^{t\mathcal{L}}$ associated with~\eqref{eq:dyn_brow},
with generator~$\mathcal L$,
gives the time evolution of expectation of functions~$\varphi\colon \torus^d \to \real$,
henceforth called observables, along the dynamics:
\begin{align}
    \label{eq:def_semigroup}
    \left(\operatorname{e}^{t\mathcal{L}}\varphi\right)(q_0) = \expect^{q_0}\left[\varphi(q_t)\right].
\end{align}
Here, 
the expectation is over all realizations of $W_t$ in \eqref{eq:dyn_brow} for a dynamics starting at a deterministic initial condition~$q_0$.
We now state an assumption on the semigroup that will be essential in the sequel.
It implies in particular that the operator $\mathcal{L}$ is invertible on $L^2_0(\mu)$.
\newcommand{\cdecay}{L}
\begin{assumption}
    \label{assumption:inv_decay}
    Suppose that there exist $L \in \mathbb{R}_+$ and $\lambda > 0$ such that
    \[
        \forall t \ge 0,\qquad \|\mathrm{e}^{t\mathcal{L}}\|_{\mathcal{B}(L^2_0(\mu))}\le \cdecay\mathrm{e}^{-\lambda t}.
    \]
    As a consequence the operator $\mathcal{L}$ is invertible on $L^2_0(\mu)$, with
    \[
        \mathcal{L}^{-1} = -\int_0^{+\infty} \mathrm{e}^{t\mathcal{L}}dt,
    \]
    and
    \[
        \| \mathcal{L}^{-1} \|_{\mathcal{B}(L^2_0(\mu))} \le \frac L \lambda.
    \]
\end{assumption}
A paradigmatic example of a dynamics for which \cref{assumption:ex_mes_inv,assumption:inv_decay} are satisfied is the overdamped Langevin dynamics
\begin{align}
    \label{ovdL}
    \d q_t = -\nabla V (q_t)\,\d t + \sqrt{\frac{2}{\beta}}\,\d W_t,
\end{align}
where $\beta > 0$ is proportional to the inverse temperature, and $V$ is a smooth potential.
The dynamics~\eqref{ovdL} admits the Gibbs probability measure with density
\[
    \mu(q) = \frac1Z \ee^{-\beta V(q)},\qquad Z = \int_\mathcal{X} \ee^{-\beta V} < +\infty,
\]
as its unique invariant probability measure; see \cite{bakry2014analysis} for various results on this dynamics.
\subsection{Computation of transport coefficients and Green--Kubo formula}
\label{sec:transport_coefficients}
Transport coefficients provide insights into how a system responds to perturbations from equilibrium.
One common method for computing them using molecular dynamics involves the famous Green–Kubo formula.
This formula expresses transport coefficients as integrated time-correlations between relevant fluxes within the equilibrium system.
The general form of the Green--Kubo formula is the following:
\begin{align}
\label{green-kubo_coeff}
\rho= \int_0^{+\infty} \mathbb{E }_\mu \left[R(q_t)S(q_0)\right]\d t,
\end{align}
where $R$, $S \in L^2_0(\mu)$ and $\mu$ is the invariant probability measure of \eqref{eq:dyn_brow}.
A natural estimator of~\eqref{green-kubo_coeff} is obtained by first truncating the time integral to some finite time~$T$ and then replacing the expectation by the empirical average over $J$ independent realizations of the equilibrium dynamics~\eqref{ovdL}
with stationary initial conditions~$q^{(j)}_0 \sim \mu$.
This leads to the estimator
\begin{equation}
    \label{eq:estimator_continuous_time}
    \widehat{\rho}_{T,J} = \frac{1}{J}\sum_{j=1}^{J}\int_0^T R\Bigl(q_t^{(j)}\Bigr)S \Bigl(q_0^{(j)}\Bigr) \, \d t.
\end{equation}
The truncation bias $\left\lvert  \expect_{\mu}\left[\widehat{\rho}_{T,J}\right] - \rho\right\rvert$ is exponentially small in the integration time~$T$,
in view of the exponential decay of correlations from~\cref{assumption:inv_decay}. Indeed,
\begin{align*}
    \bigl\lvert  \expect_{\mu}\left[\widehat{\rho}_{T,J}\right] - \rho \bigr\rvert
    = \left\lvert \int_T^{+\infty} \expect_\mu \left[R(q_t)S(q_0)\right]\d t \right\rvert \leq \int_T^{+\infty} \left|\left\langle\ee^{t\mathcal{L}}R,S\right\rangle\right| \d t,
\end{align*}
so that, by the Cauchy--Schwarz inequality,
\begin{align}
\label{bias_time_trunc}
\left\lvert  \expect_{\mu}\left[\widehat{\rho}_{T,J}\right] - \rho\right\rvert \leq \|R\|\|S\|\int_T^{+\infty}L\ee^{-\lambda t}\d t = \frac{\|R\|\|S\|}{\lambda} L\ee^{-\lambda T}.
\end{align}
The above inequality initially suggests that $T$ should be sufficiently large to meet a small error threshold.
However, the estimator $\widehat{\rho}_{T,J}$ of the Green--Kubo formula has a variance that increases linearly with time $T$,
as made precise in the following proposition.
Note that the next result was proven in a similar way in~\cite{pavliotis2024neuralnetworkapproachesvariance} at the same this work was completed.
\begin{proposition}
    \label{proposition:asymptotic_variance_gk}
    Let $R, S \in L^2_0(\mu)$ be two smooth observables. Under~\cref{assumption:inv_decay}, the variance of the estimator $\widehat{\rho}_{T,J}$ given in~\eqref{eq:estimator_continuous_time} scales asymptotically as $T$:
    \begin{align}
        \label{eq:scaling_var_gk}
        \lim\limits_{\substack{T \to \infty}} \frac1T \var \left( \widehat{\rho}_{T,J} \right) = \frac{2}{J} \|S\|^2\left\langle R, -\mathcal{L}^{-1}R \right\rangle.
    \end{align}
\end{proposition}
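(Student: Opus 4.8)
The plan is to reduce the statement to the large-time behaviour of the second moment of the relevant functional along a single stationary trajectory, to expand that second moment as a double time integral, to apply the Markov property twice, and then to split off the leading contribution using the mean-zero projection~$\Pi$.

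First I would use that the $J$ independent trajectories in the estimator~\eqref{eq:estimator_continuous_time} are identically distributed, so that $\var(\widehat{\rho}_{T,J}) = \frac1J \var(Z_T)$ with $Z_T := S(q_0)\int_0^T R(q_t)\,\d t$ along a single solution of~\eqref{eq:dyn_brow} started at $q_0 \sim \mu$. Writing $\var(Z_T) = \expect[Z_T^2] - \expect[Z_T]^2$, stationarity and~\eqref{eq:def_semigroup} give $\expect[Z_T] = \int_0^T \langle \ee^{t\mathcal L}R, S\rangle\,\d t$, which converges to $\langle -\mathcal L^{-1}R, S\rangle$ by~\cref{assumption:inv_decay}; hence $\frac1T\expect[Z_T]^2 \to 0$ and it suffices to prove $\frac1T\expect[Z_T^2] \to 2\|S\|^2\langle R, -\mathcal L^{-1}R\rangle$. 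Since $R$ and $S$ are continuous hence bounded on $\torus^d$, Fubini's theorem and symmetry in $(s,t)$ give
\[
  \expect[Z_T^2] = 2\int_0^T\!\!\int_0^t \expect\bigl[S(q_0)^2 R(q_s) R(q_t)\bigr]\,\d s\,\d t,
\]
and for $0 \le s \le t$, conditioning on $\mathcal F_s$ and using~\eqref{eq:def_semigroup} gives $\expect[R(q_t)\mid \mathcal F_s] = (\ee^{(t-s)\mathcal L}R)(q_s)$, whence, conditioning the result on $\mathcal F_0$,
\[
  \expect\bigl[S(q_0)^2 R(q_s) R(q_t)\bigr] = \bigl\langle S^2,\ \ee^{s\mathcal L}\bigl(R\,\ee^{(t-s)\mathcal L}R\bigr)\bigr\rangle.
\]

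I would then decompose $R\,\ee^{(t-s)\mathcal L}R = \langle R, \ee^{(t-s)\mathcal L}R\rangle\,\one + \Pi\bigl(R\,\ee^{(t-s)\mathcal L}R\bigr)$. Since $\ee^{s\mathcal L}\one = \one$ and $\expect_\mu[S^2] = \|S\|^2$, the first piece contributes exactly $\|S\|^2\langle R, \ee^{(t-s)\mathcal L}R\rangle$, while the remainder equals $\langle \Pi(S^2),\ \ee^{s\mathcal L}\Pi(R\,\ee^{(t-s)\mathcal L}R)\rangle$ (using that $\ee^{s\mathcal L}$ preserves $\expect_\mu$). Here is the delicate point: bounding this remainder only through the decay $\ee^{-\lambda s}$ of $\ee^{s\mathcal L}$ leaves, after division by $T$, a non-vanishing $O(1)$ term; one must also use the decay of $\ee^{(t-s)\mathcal L}R$ inside the product, via $\|R\,\ee^{(t-s)\mathcal L}R\| \le \norm{R}_{L^\infty}\|\ee^{(t-s)\mathcal L}R\| \le L\,\norm{R}_{L^\infty}\|R\|\,\ee^{-\lambda(t-s)}$, which is precisely where boundedness of $R$ (hence smoothness of the observables and compactness of $\torus^d$) enters. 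Combining Cauchy--Schwarz with~\cref{assumption:inv_decay} applied to both semigroup factors then bounds the remainder by $C\,\ee^{-\lambda s}\ee^{-\lambda(t-s)} = C\,\ee^{-\lambda t}$, uniformly in $s \in [0,t]$, with $C = L^2\,\norm{\Pi(S^2)}\,\norm{R}_{L^\infty}\,\|R\|$.

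Finally, inserting this into the double integral and changing variables $u = t-s$ gives
\[
  \frac1T\,\expect[Z_T^2] = \frac{2\|S\|^2}{T}\int_0^T \Bigl(\int_0^t \langle R, \ee^{u\mathcal L}R\rangle\,\d u\Bigr)\,\d t + \mathcal E_T, \qquad |\mathcal E_T| \le \frac{2C}{T}\int_0^T t\,\ee^{-\lambda t}\,\d t \le \frac{2C}{\lambda^2 T} \to 0.
\]
For the main term, $t \mapsto \int_0^t \langle R, \ee^{u\mathcal L}R\rangle\,\d u$ is bounded uniformly in $t$ and converges to $\langle R, -\mathcal L^{-1}R\rangle$ by~\cref{assumption:inv_decay}, so its Cesàro average over $[0,T]$ converges to the same limit; this yields $\frac1T\expect[Z_T^2] \to 2\|S\|^2\langle R, -\mathcal L^{-1}R\rangle$, hence the claimed scaling for $\frac1T\var(\widehat{\rho}_{T,J})$. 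The only genuine obstacle is the sharp $\ee^{-\lambda t}$ remainder estimate in the previous step; everything else is routine bookkeeping.
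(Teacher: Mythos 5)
Your proof is correct, but it follows a genuinely different route from the paper's. The paper solves the Poisson equation $-\mathcal L\mathcal R = R$ up front, applies It\^o's formula to replace $\int_0^T R(q_t)\,\d t$ by a boundary term plus the martingale $\int_0^T \nabla\mathcal R(q_t)^\top\sigma(q_t)\,\d W_t$, computes the second moment via It\^o's isometry, and then identifies the limit through the carr\'e du champ identity $\expect_\mu[\Gamma_{\mathcal R}] = 2\langle R,-\mathcal L^{-1}R\rangle$; this requires elliptic regularity to ensure $\mathcal R$ is bounded, but it is the standard martingale route to such CLT-type variances and is the template reused in the later lemmas of the paper. You instead stay entirely at the level of the semigroup: expand the second moment as a double time integral, apply the Markov property twice to get $\langle S^2, \ee^{s\mathcal L}(R\,\ee^{(t-s)\mathcal L}R)\rangle$, split off the mean with $\Pi$, and control the remainder by exploiting the exponential decay in \emph{both} time variables so that it is $O(\ee^{-\lambda t})$ rather than merely $O(\ee^{-\lambda s})$ --- you correctly identify this double use of \cref{assumption:inv_decay} as the crux, since decay in $s$ alone leaves an $O(1)$ residue after division by $T$. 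Your approach avoids stochastic calculus and the carr\'e du champ identity altogether, produces $-\mathcal L^{-1}$ directly as the integrated semigroup via a Ces\`aro argument, and explicitly disposes of the $\expect[Z_T]^2/T$ term that the paper leaves implicit; what it gives up is the martingale structure that the paper's subsequent asymptotic analysis (e.g.\ \cref{lemmaN,lemmaD}) builds on. All the individual steps you flag as routine (Fubini, $\|\Pi f\|\le\|f\|$, invariance of $\mu$ under $\ee^{s\mathcal L}$, boundedness of the observables on $\torus^d$) check out.
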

\begin{proof}
    Since $\var(\widehat{\rho}_{T,J}) = J^{-1}\var(\widehat{\rho}_{T,1})$,
    it suffices to prove the result for $J=1$.
    Denote by $\mathcal{R} \in L^2_0(\mu)$ the solution to the Poisson equation $-\mathcal L \mathcal R = R$. The latter equation is well posed in view of~\cref{assumption:inv_decay}.
    By It\^o's formula,
    it holds
    \[
        \frac{1}{\sqrt{T}} \int_0^T R(q_t) S(q_0)\, \d t = \frac{S(q_0)}{\sqrt{T}}  \bigl( \mathcal R(q_0) - \mathcal R(q_T) \bigr) +
        \frac{S(q_0)}{\sqrt{T}} \int_{0}^{T} \nabla \mathcal R(q_t)^\top \sigma(q_t) \, \d W_t.
    \]
    The function $S$ is bounded uniformly over~$\torus^d$,
    and so is~$\mathcal R$ by boundedness of~$R$ and elliptic regularity.
    Therefore, the first term on the left-hand side of the above equality tends to 0 in squared expectation in the limit~$T \to +\infty$.
    Thus,
    using It\^o's isometry for the second term,
    we obtain
    \begin{align*}
        &\lim_{T \to \infty} \frac{1}{T} \expect_{\mu}\!\left[ \left( \int_0^T R(q_t) S(q_0) \, \d t \right)^2 \right]
        = \lim_{T \to \infty} \frac{1}{T} \int_{0}^{T} \expect_{\mu} \Bigl[ \bigl\lvert S(q_0) \bigr\rvert^2 |\sigma^\top \nabla \mathcal R(q_t)|^2 \Bigr]  \, \d t \\
        & \qquad\qquad\qquad\qquad\qquad  = \norm{S}^2 \expect_{\mu} \Bigl[ \Gamma_{\mathcal R} \Bigr]
        + \lim_{T \to \infty} \frac{1}{T} \int_{0}^{T} \expect_{\mu} \Bigl[ \bigl\lvert S(q_0) \bigr\rvert^2 \Pi \Gamma_{\mathcal R}(q_t) \Bigr]  \, \d t,
    \end{align*}
    where we used stationarity and introduced $\Gamma_{\mathcal R} := |\sigma^\top \nabla \mathcal R(q_t)|^2 = \nabla \mathcal R^\top \sigma\sigma^\top \nabla \mathcal R$. \\
    By conditioning on~$q_0$,
    then using the tower property of conditional expectation
    and the definition of the semigroup~\eqref{eq:def_semigroup},
    we obtain by~\cref{assumption:inv_decay},
    \begin{align*}
        \expect_{\mu} \Bigl[ \bigl\lvert S(q_0) \bigr\rvert^2 \Pi \Gamma_{\mathcal R}(q_t) \Bigr] 
        &= \expect_{\mu} \Bigl[ \expect\!\left[\bigl\lvert S(q_0) \bigr\rvert^2 \Pi \Gamma_{\mathcal R}(q_t) \,\middle|\, q_0\right]\Bigr] \\
        & = \expect_{\mu} \Bigl[ \bigl\lvert S(q_0) \bigr\rvert^2 \expect^{q_0}\!\left[ \Pi \Gamma_{\mathcal R}(q_t) \right]\Bigr] \\
        & = \expect_{\mu} \Bigl[ \bigl\lvert S(q_0) \bigr\rvert^2  \ee^{t\mathcal{L}}\Pi \Gamma_{\mathcal R}(q_0)\Bigr].
    \end{align*}
    From the last expression, 
    by the Cauchy--Schwarz inequality and smoothness of the observable ${S\colon \torus^d \to \real^d}$ we have
    \begin{align*}
        &\left|\frac{1}{T} \int_{0}^{T} \expect_{\mu} \Bigl[ \bigl\lvert S(q_0) \bigr\rvert^2 \Pi \Gamma_{\mathcal R}(q_t) \Bigr]  \, \d t\right| 
        \leq \frac 1 T\norm{S}_{L^4(\mu)}^2\int_{0}^{T} \norm{\ee^{t\mathcal{L}}\Pi \Gamma_{\mathcal R}}\, \d t \\
        &\qquad\qquad\leq \frac L T\norm{S}_{L^4(\mu)}^2\norm{\Pi \Gamma_{\mathcal R}} \int_{0}^{T} \ee^{-\lambda t}\, \d t 
         \xrightarrow[T \to \infty]{} 0.
    \end{align*}
    The result then follows from the equality $\expect_{\mu} \bigl[ \Gamma_{\mathcal R} \bigr] = 2\myip{R, -\mathcal L^{-1} R}$,
    see for example~\cite[Theorem 6.12]{pavliotis2008multiscale}.
    The latter expression can be justified by rewriting the carré du champ as as $\Gamma_{\mathcal R} = \mathcal L \mathcal R^2 - 2 \mathcal R \mathcal L \mathcal R$ and integrating against~$\mu$.
\end{proof}
\section{Variance reduction using importance sampling}
\label{sec:variance-reduction}
In the previous section, we showed that the variance of the time-truncated estimator of the Green--Kubo formula grows linearly in time,
with a constant prefactor including the term $\myip{R,-\mathcal L^{-1} R}$.
Thus, a natural idea to achieve variance reduction is to perturb the reference dynamics in such a manner that~$\myip{R,-\mathcal L_{\alpha}^{-1} R}$ is smaller than $\myip{R,-\mathcal L^{-1} R}$,
where $\mathcal L_{\alpha}$ is the generator of the modified dynamics.
In order to define an estimator based on the modified dynamics that is asymptotically consistent as the integration time~$T$ tends to~$+\infty$,
weights must be included in accordance with Girsanov's theorem.
We first present in~\cref{sec:new_approx} an estimator of~$\rho$ in~\eqref{girsanov-application} based on the modified dynamics.
Then, in~\cref{sec:asymptotic}, we study the dependence of the variance of the estimator in terms of the biasing parameter $\alpha$,
and provide an approximation of the value of~$\alpha$ which minimizes the variance.
Finally, we investigate the dependence of the optimal biasing parameter with respect to the integration time~$T$.
\subsection{Approximation based on Girsanov formula}
\label{sec:new_approx}
In this section we study an approximation for the time-truncated Green--Kubo formula obtained using Girsanov's theorem.
Let us place ourselves in a probability space $(\Omega, \mathcal F, \proba)$,
and let $(q_t^{\alpha})$ denote the solution to the overdamped Langevin dynamics
with biasing force $\alpha \sigma u\colon \torus^d \to \real^d$ for~$\alpha \in \real$:
\begin{equation}
    \label{eq:biased_dynamics}
    \d q_t^\alpha = b(q_t^\alpha) \, \d t + \alpha \sigma(q_t^\alpha) u(q_t^\alpha) \,\d t + \sigma(q_t^\alpha)\, \d W^{\proba}_t, \qquad q_0^\alpha = q_0,
\end{equation}
where $(W^{\proba}_{t\geq0})$ is a standard Brownian motion with respect to~$\proba$
and $u\colon \real^d \to \real^m$ is a vector field.
We emphasize that this equation is considered with the same initial condition as~\eqref{eq:dyn_brow}. For notational convenience, 
the upperscrit is omitted for the reference (unbiased) dynamics, \textit{i.e.}~$q_t = q_t^0$.
Let also $\mathcal E(X^{\alpha})_t$ denote the Doléans-Dade exponential of the process $X^{\alpha}$,
which is given by
\begin{align}
    \label{eq:expression_expo}
    \mathcal E(X^{\alpha})_t
    = \exp \left( X^{\alpha}_t - \frac{1}{2} \langle X^{\alpha} \rangle_t \right),
     \qquad X^{\alpha}_t = - \alpha\int_0^t u(q_s^\alpha) \cdot \d W_s^\proba.
\end{align}
We show in~\cref{proposition:first_second_moment_girsanov} that,
as a consequence of Girsanov's theorem,
it holds
\begin{align}
    \label{girsanov-application}
    \expect \left[ \int_0^T R(q_t)S(q_0) \, \d t \right]
    = \expect \left[\mathcal{E}(X^{\alpha})_T \int_0^TR(q_t^\alpha)S(q_0^\alpha) \, \d t \right].
\end{align}
Thus we define the following estimator:
\begin{align}
    \label{eq:many_replica}
    \widehat{A}_{T,J}^{\alpha} = \frac{1}{J}\sum_{j=1}^J \mathcal E(X^{\alpha,j})_T \int_0^T R(q_t^{\alpha,j})S(q_0^{\alpha,j}) \, \d t,
\end{align}
where $\bigl(q_t^{\alpha,j}\bigr)$,
for $j \in \{1, \dotsc, J\}$,
are $J$ independent realization of the dynamics~\eqref{eq:biased_dynamics} and
\[
    X^{\alpha,j}_t = -\alpha\int_0^t u(q_s^{\alpha,j}) \cdot \d W^{j,\proba}_s .
\]
It is clear that
\(
    \expect \left[ \widehat{A}_{T,J}^{\alpha} \right] = \expect \left[ \widehat{A}_{T}^{\alpha} \right],
\)
and
\(
    \var \left[ \widehat{A}_{T,J}^{\alpha} \right] = \frac{1}{J} \var \left[ \widehat{A}_{T}^{\alpha} \right],
\)
where
\begin{align}
    \label{eq:good_estimator}
    \widehat{A}_T^{\alpha}  =  \mathcal{E}(X^\alpha)_T \int_0^T R(q_t^\alpha)S(q_0^\alpha)\,\d t.
\end{align}
In other words, the many-replica estimator~\eqref{eq:many_replica} has the same bias as the one-replica estimator~\eqref{eq:good_estimator},
and a smaller variance by a factor~$\frac{1}{J}$.
Therefore, it is sufficient to study the estimator~\eqref{eq:good_estimator}.
The main goal is now to understand whether this estimator can be employed for variance reduction purposes.
To this end,
we first show the following result on the raw first and second moments of~$\widehat A_T^{\alpha}$.
\begin{proposition}
    \label{proposition:first_second_moment_girsanov}
    For all $T \ge 0$ and $\alpha \in \mathbb{R}$,
    \begin{equation}
        \label{eq:bias_girsanov}
        \expect \left[ \widehat{A}_T^{\alpha} \right]
        = \expect \left[ \widehat{\rho}_T \right].
    \end{equation}
    Furthermore,
    \begin{align}
        \label{eq:variance_girsanov}
        \expect \left[ \left\lvert \widehat{A}_T^{\alpha} \right\rvert^2 \right]
        = \expect \left[\left(\int_0^T R(q_t)S(q_0)\,\d t \right)^2 E_T(\alpha)\right],
    \end{align}
    with
    \[
        E_T(\alpha)
        =  \exp\left(-\alpha\int_0^T u(q_t) \cdot \d W^{\proba}_t
        + \frac{\alpha^2}{2}\int_0^T \bigl\lvert u(q_t) \bigr\rvert^2 \, \d t\right).
    \]
\end{proposition}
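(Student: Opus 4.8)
The plan is to use Girsanov's theorem to change the path measure so that, under the new measure, the biased process $q^\alpha$ of~\eqref{eq:biased_dynamics} has the law of the reference process $q$ of~\eqref{eq:dyn_brow}, and then to keep careful track of the exponential weights produced. Since $u$ is continuous on the compact torus $\torus^d$, it is bounded, so Novikov's criterion applies and $t \mapsto \mathcal E(X^\alpha)_t$ is a genuine $\proba$-martingale with unit expectation; one may therefore define a probability measure $\proba^\alpha$ on $\mathcal F_T$ by setting $\frac{\d \proba^\alpha}{\d \proba} = \mathcal E(X^\alpha)_T$ there. By Girsanov's theorem, $\widehat W_t := W^{\proba}_t + \alpha \int_0^t u(q^\alpha_s)\,\d s$ is a standard Brownian motion under $\proba^\alpha$; inserting $\d W^{\proba}_t = \d \widehat W_t - \alpha u(q^\alpha_t)\,\d t$ into~\eqref{eq:biased_dynamics} cancels the biasing drift $\alpha \sigma u$, so that under $\proba^\alpha$ the process $q^\alpha$ solves the reference SDE driven by $\widehat W$, with unchanged initial law (the change of measure leaves $\mathcal F_0$, hence $q_0$, untouched). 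By weak uniqueness for~\eqref{eq:dyn_brow}, this yields the identity of laws
\[
    \Law_{\proba^\alpha}\bigl((q^\alpha_\cdot, \widehat W_\cdot)\bigr) = \Law_{\proba}\bigl((q_\cdot, W^{\proba}_\cdot)\bigr).
\]

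For the first moment~\eqref{eq:bias_girsanov}, I would absorb the whole weight into the measure:
\begin{align*}
    \expect_{\proba}\bigl[\widehat A_T^\alpha\bigr]
    &= \expect_{\proba}\left[\mathcal E(X^\alpha)_T \int_0^T R(q^\alpha_t) S(q^\alpha_0)\,\d t\right]
    = \expect_{\proba^\alpha}\left[\int_0^T R(q^\alpha_t) S(q^\alpha_0)\,\d t\right] \\
    &= \expect_{\proba}\left[\int_0^T R(q_t) S(q_0)\,\d t\right]
    = \expect_{\proba}\bigl[\widehat\rho_T\bigr],
\end{align*}
the third equality using that, under $\proba^\alpha$, the path $q^\alpha$ has the law of the reference process.

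For the second moment~\eqref{eq:variance_girsanov}, the same device is applied, but only one of the two factors of $\mathcal E(X^\alpha)_T$ is absorbed into the measure, so that
\begin{align*}
    \expect_{\proba}\bigl[\lvert \widehat A_T^\alpha \rvert^2\bigr]
    &= \expect_{\proba}\left[\mathcal E(X^\alpha)_T\, \mathcal E(X^\alpha)_T \left(\int_0^T R(q^\alpha_t) S(q^\alpha_0)\,\d t\right)^2\right] \\
    &= \expect_{\proba^\alpha}\left[\mathcal E(X^\alpha)_T \left(\int_0^T R(q^\alpha_t) S(q^\alpha_0)\,\d t\right)^2\right].
\end{align*}
The surviving weight must then be rewritten in terms of the $\proba^\alpha$-Brownian motion $\widehat W$: substituting $\d W^{\proba}_t = \d \widehat W_t - \alpha u(q^\alpha_t)\,\d t$ and completing the square,
\begin{align*}
    \mathcal E(X^\alpha)_T
    &= \exp\left(-\alpha\int_0^T u(q^\alpha_t)\cdot \d W^{\proba}_t - \frac{\alpha^2}{2}\int_0^T \lvert u(q^\alpha_t)\rvert^2\,\d t\right) \\
    &= \exp\left(-\alpha\int_0^T u(q^\alpha_t)\cdot \d \widehat W_t + \frac{\alpha^2}{2}\int_0^T \lvert u(q^\alpha_t)\rvert^2\,\d t\right),
\end{align*}
which is exactly the functional defining $E_T(\alpha)$, now read off along $(q^\alpha_\cdot, \widehat W_\cdot)$ instead of $(q_\cdot, W^{\proba}_\cdot)$. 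Invoking the equality in law above to replace $(q^\alpha, \widehat W)$ by $(q, W^{\proba})$ everywhere then produces $\expect_{\proba}\bigl[E_T(\alpha)\bigl(\int_0^T R(q_t) S(q_0)\,\d t\bigr)^2\bigr]$, which is the right-hand side of~\eqref{eq:variance_girsanov}.

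The computation is essentially bookkeeping, and I expect the only genuinely delicate steps to be of a measure-theoretic nature. First, the true-martingale property of $\mathcal E(X^\alpha)$ is needed for $\proba^\alpha$ to be a bona fide probability measure, and this is exactly where the boundedness of $u$ on $\torus^d$, through Novikov's criterion, enters. Second, in the second-moment step one must justify that the Itô integral $\int_0^\cdot u(q^\alpha_s)\cdot \d \widehat W_s$ is carried along correctly under the equality in law of the pairs $(q^\alpha, \widehat W)$ and $(q, W^{\proba})$; this relies on the standard, but slightly technical, fact that stochastic integrals admit versions that are measurable functionals of the integrator--integrand pair, and are therefore preserved under such identifications of law. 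Once these two points are granted, both claimed identities follow from the displayed chains of equalities.
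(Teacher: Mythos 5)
Your proposal is correct and follows essentially the same route as the paper: Novikov's condition via boundedness of $u$, the change of measure $\d\proba^\alpha/\d\proba = \mathcal E(X^\alpha)_T$, absorption of the full weight for the first moment and of one factor for the second, rewriting the surviving weight in terms of the $\proba^\alpha$-Brownian motion, and the identification of the joint law of the pair $(q^\alpha,\widehat W)$ under $\proba^\alpha$ with that of $(q,W^{\proba})$ under $\proba$. Your justification of Novikov through the boundedness of $u$ is in fact slightly more careful than the paper's wording, and your remark on stochastic integrals being measurable functionals of the integrator--integrand pair makes explicit a point the paper leaves implicit.
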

\begin{proof}
    Equality~\eqref{eq:bias_girsanov} is a direct application of Girsanov's theorem. 
    We nonetheless make precise how to obtain this result as this will be useful to establish~\eqref{eq:variance_girsanov}.

    Since $b$ is bounded,
    the so-called Novikov condition
    is fulfilled (see~\cite[Exercise~4.4]{MR2001996}):
    \[
        \expect\left[\exp\left(\frac12\int_0^T |X^{\alpha}_t|^2 \d t\right)\right]< \infty.
    \]
    Consequently, the stochastic process $t \mapsto \mathcal E(X^{\alpha})_t$ is a martingale.
    Thus, it follows from Girsanov's theorem,
    see e.g.~\cite[Theorem~8.1]{MR3097957} and \cite[Theorems 8.6.4 and~8.6.8]{MR2001996},
    that the probability measure~$\proba^\alpha$ defined by its Radon--Nykodym derivative
    \begin{equation}
        \label{eq:girsanov0}
        \frac{\d \proba^\alpha}{\d \proba} (\omega)
        = \mathcal E(X^{\alpha})_T
    \end{equation}
    is equivalent to $\proba$.
    Girsanov's theorem also gives that the process
    \begin{equation}
        \label{eq:girsanov1}
        W^{\proba^\alpha}_t  := \alpha \int_{0}^{t} u(q^{\alpha}_t) \, \d t  + W^{\proba}_t,
    \end{equation}
    is a standard Brownian motion with respect to $\proba^\alpha$,
    and that, furthermore,
    the process $(q_{t}^{\alpha})_{t\geq0}$ is a strong solution to
    \begin{equation}
        \label{eq:girsanov2}
        \d q_t^\alpha = b(q_t^\alpha) + \sigma \, \d W^{\proba^\alpha}_t, \qquad q_0^\alpha = q_0.
    \end{equation}
    It immediately follows that~\eqref{eq:bias_girsanov} holds,
    as
    \begin{align*}
        \expect_{\proba} \left[\mathcal E(X^{\alpha})_T \int_{0}^{T} R(q^{\alpha}_t)S(q^{\alpha}_0)\,\d t \right]
        &\stackrel{\eqref{eq:girsanov0}}{=} \expect_{\proba^\alpha} \left[ \int_{0}^{T} R(q^{\alpha}_t)S(q^{\alpha}_0)\,\d t \right]\\
        &\stackrel{\eqref{eq:girsanov2}}{=}
        \expect_{\proba} \left[ \int_{0}^{T} R(q_t)S(q_0)\,\d t \right].
    \end{align*}
    In the last equation,
    we used that the $\proba$-law of the process~$(q_t)_{t\geq0}$
    is equal to the $\proba^{\alpha}$-law of the process~$(q^{\alpha}_t)_{t\geq0}$.
    This is clear in view of~\eqref{eq:girsanov2} and the fact that the $\proba^{\alpha}$-law of~$q^{\alpha}_0$ is
    equal to the $\proba$-law of $q_0$,
    since $q^{\alpha}_0 = q_0$ and $q_0$ is independent of~$\mathcal E(X^{\alpha})_t$. \\
    To prove~\eqref{eq:variance_girsanov},
    we use~\eqref{eq:girsanov1} to rewrite
    \begin{align}
        \notag
        \mathcal E(X^{\alpha})_T
        &= \exp\left(- \alpha \int_0^T u(q^{\alpha}_t) \, \d W^{\proba}_t
        - \frac{\alpha^2}{2} \int_0^T \bigl\lvert u(q_t^\alpha) \bigr\rvert^2 \, \d t \right) \\
        \label{eq:rewritten_weight}
        &= \exp\left(- \alpha \int_0^T u(q^{\alpha}_t) \, \d W^{\proba^\alpha}_t
        + \frac{\alpha^2}{2} \int_0^T \bigl\lvert u(q_t^\alpha) \bigr\rvert^2 \, \d t \right).
    \end{align}
    Therefore, by~\eqref{eq:girsanov0} and~\eqref{eq:rewritten_weight}
    \begin{align*}
        &\expect_{\proba} \left[\left(\mathcal{E}(X^{\alpha})_T\int_0^T R(q_t^\alpha)S(q_0^\alpha) \, \d t \, \right)^2\right]
        = \expect_{\proba^\alpha} \left[\mathcal{E}(X^{\alpha})_T \left(\int_0^T R(q_t^\alpha)S(q_0^\alpha) \, \d t \, \right)^2 \right] \\
        &= \expect_{\proba^\alpha}
        \left[
            \exp
            \left(
                - \alpha \int_0^T u(q^\alpha_t) \, \d W^{\proba^\alpha}_t
                + \frac{\alpha^2}{2} \int_0^T \bigl\lvert u(q^\alpha_t) \bigr\rvert^2 \, \d t
            \right)
            \left(\int_0^T R(q^{\alpha}_t)S(q^{\alpha}_0) \, \d t \, \right)^2
        \right].
    \end{align*}
    Since the $\proba^\alpha$ law of $(W^{\proba^\alpha}_t, q^{\alpha}_t)_{t \in [0, T]}$ coincides with
    the $\proba$ law of $(W^{\proba}_t, q_t)_{t \in [0, T]}$,
    we finally obtain~\eqref{eq:variance_girsanov}.
\end{proof}
In view of~\eqref{eq:bias_girsanov},
it holds that
\begin{align}
    \label{eq:variance_estimator}
    \var \left[ \widehat{A}_{T}^{\alpha} \right]
    = \expect\left[ \left| \widehat{A}_{T}^{\alpha} \right|^2\right] - \expect\left[ \widehat{A}_{T}^{\alpha}\right]^2
    = \expect\left[ \left| \widehat{A}_{T}^{\alpha} \right|^2\right] - \expect\left[ \widehat \rho_{T}\right]^2.
\end{align}
The second term of this expression does not depend on $\alpha$ and converges to $\rho^2$ in the limit~$T \to \infty$,
which motivates studying the scaling with respect to~$\alpha$ of the first term.
\Cref{proposition:first_second_moment_girsanov}, specifically equation~\eqref{eq:variance_girsanov},
is very useful for this purpose,
since it gives an expression in terms of the solution to the unbiased dynamics~\eqref{eq:dyn_brow}.
Let us introduce $F_T\colon \real \to \real$ given by
\begin{align}
    F_T(\alpha)= \expect \left[ \left\lvert \widehat{A}_T^{\alpha} \right\rvert^2 \right] = \expect \left[\Phi_T(q) \, \exp\left( \alpha X_T + \frac{\alpha^2}{2} \langle X \rangle_T \right)\right],
\end{align}
where
\begin{align}
    \label{eq:x_t}
    X_T = -\int_0^T u(q_t) \cdot \d W_t,
    \qquad \langle X \rangle_T = \int_0^T \bigl\lvert u(q_t) \bigr\rvert^2 \, \d t,
\end{align}
and
\begin{align}
    \label{eq:definition_phi}
    \Phi_T(q) = \left(\int_{0}^T R(q_t) S(q_0) \, \d t\right)^2.
\end{align}
For a fixed time $T$, the map $F_T$ is a real function of a single variable.
The next natural step is to show the existence and uniqueness of an optimal $\alpha \in \real$ that minimizes the variance. This is the purpose of the next proposition.
Let us first introduce the following additional assumptions.
\begin{assumption}
    \label{assump:fc_nonzero}
    The observables~$R$ and~$S$ are continuous and non-zero, and so is the perturbation $u$.
\end{assumption}
\begin{remark}
    It is in fact sufficient to assume that the observables are only~$\mathcal{C}^1(\torus^d)$ in~\cref{assump:fc_nonzero}. 
    For simplicity's sake we retain the smoothness assumption.
\end{remark}
\begin{assumption}
    \label{assump:mes_inv_ci}
    The initial condition~$q_0 = q^{\alpha}_0$ of every dynamics considered in the sequel is sampled from the invariant probability measure~$\mu$.
    In particular, this implies that for every time $t \in \real_+$,
    the distribution of~$q_t$ is~$\mu$.
\end{assumption}
\begin{proposition}
    \label{prop:differentiation}
    Under~\cref{assump:fc_nonzero,assump:mes_inv_ci}, for all $T>0$, the function $\alpha \mapsto F_T(\alpha)$ is smooth,
    with first, second and third derivatives given by
    \begin{subequations}
        \begin{align}
            \label{eq:first_derivative}
            F_T'(\alpha) &= \expect \left[ \Phi_T(q) \bigl(X_T + \alpha \langle X \rangle_T \bigr) \exp\left( \alpha X_T + \frac{\alpha^2}{2} \langle X \rangle_T \right) \right], \\
            \notag
            F_T''(\alpha) &= \expect \left[\Phi_T(q) \langle X \rangle_T \exp\left( \alpha X_T + \frac{\alpha^2}{2} \langle X \rangle_T \right) \right]  \\
            \label{eq:second_derivative}
                          &\qquad + \expect \left[\Phi_T(q)\left(\alpha\langle X \rangle_T + X_T \right)^2 \exp\left( \alpha X_T + \frac{\alpha^2}{2} \langle X \rangle_T \right) \right], \\
                          \notag
            F^{(3)}_T(\alpha) &= \expect_\mu\!\left[\Phi_T(q)\,\bigl(X_T + \alpha\langle X \rangle_T\bigr)^3 \exp\left( \alpha X_T + \frac{\alpha^2}{2} \langle X \rangle_T \right)\right] \\
            \label{eq:third_derivative}
            &\quad+ 3\,\expect_\mu\!\left[\Phi_T(q)\,\langle X \rangle_T\left(X_T + \alpha\langle X \rangle_T\right) \exp\left( \alpha X_T + \frac{\alpha^2}{2} \langle X \rangle_T \right)\right].
        \end{align}
    \end{subequations}
    Furthermore, the function $F_T$ is strictly convex and coercive.
    Therefore, there exists a unique global minimizer $\alpha_T^*$ characterized by
    \(
        F_T'(\alpha_T^*) = 0.
    \)
\end{proposition}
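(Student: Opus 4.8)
The strategy is to establish, in order: (i) smoothness of $F_T$ together with the derivative formulas~\eqref{eq:first_derivative}--\eqref{eq:third_derivative}; (ii) convexity; (iii) strict convexity and coercivity, both reduced to the positivity of a single probability; and (iv) the conclusion about the minimizer.

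For (i), the plan is to differentiate under the expectation sign. The quantities $\Phi_T(q)$ and $\langle X \rangle_T$ are bounded ($\Phi_T(q)\le T^2\|R\|_\infty^2\|S\|_\infty^2$ and $\langle X \rangle_T\le T\|u\|_\infty^2$, by continuity of $R,S,u$ on the compact torus), so the only unbounded ingredient of the integrand is $X_T=-\int_0^T u(q_t)\cdot\d W_t$, a genuine $L^2$-martingale with bounded quadratic variation. The key point is that $X_T$ has exponential moments of every order: reproducing the Novikov/Doléans--Dade argument from the proof of~\cref{proposition:first_second_moment_girsanov} for $\theta X$ in place of $X^\alpha$ gives $\expect[\ee^{\theta X_T}]\le\ee^{\theta^2 T\|u\|_\infty^2/2}$ for all $\theta\in\real$, whence $\expect[|X_T|^k\ee^{\theta X_T}]<\infty$ for every $k\in\nat$ and $\theta\in\real$ (using $|x|^k\le C_{k,\delta}(\ee^{\delta x}+\ee^{-\delta x})$). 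For $\alpha$ in a bounded interval, the $j$-th $\alpha$-derivative of $\Phi_T(q)\exp(\alpha X_T+\tfrac{\alpha^2}{2}\langle X \rangle_T)$ equals $\Phi_T(q)$ times a polynomial in $X_T$ (with coefficients depending polynomially on $\alpha$ and $\langle X \rangle_T$) times the exponential, hence is dominated by an integrable random variable independent of $\alpha$; the standard dominated-convergence argument for difference quotients then yields $F_T\in C^\infty(\real)$ and legitimizes differentiation under $\expect$. Computing $\partial_\alpha^j\exp(\alpha X_T+\tfrac{\alpha^2}{2}\langle X \rangle_T)$ for $j=1,2,3$ reproduces exactly~\eqref{eq:first_derivative}--\eqref{eq:third_derivative}.

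For (ii)--(iii), convexity is immediate since both terms of~\eqref{eq:second_derivative} are expectations of nonnegative random variables ($\Phi_T\ge 0$, $\langle X \rangle_T\ge 0$, exponential $>0$), so $F_T''\ge 0$. For strict convexity it suffices to prove $\expect[\Phi_T(q)\langle X \rangle_T]>0$: this expectation vanishes iff $\Phi_T(q)\langle X \rangle_T=0$ almost surely (the weight being strictly positive), so if it is positive then the first term of~\eqref{eq:second_derivative} is strictly positive for \emph{every} $\alpha$, giving $F_T''>0$ on all of $\real$. The same event handles coercivity: on $\{\Phi_T(q)>0\}\cap\{\langle X \rangle_T>0\}$ one has $\alpha X_T+\tfrac{\alpha^2}{2}\langle X \rangle_T\to+\infty$ as $|\alpha|\to\infty$ (because $\langle X \rangle_T>0$), so the integrand tends to $+\infty$ there, and since it is nonnegative everywhere, Fatou's lemma gives $F_T(\alpha)\to+\infty$. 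Thus both properties reduce to showing that $\{\Phi_T(q)>0\}\cap\{\langle X \rangle_T>0\}$ has positive probability, i.e. that with positive probability $S(q_0)\int_0^T R(q_t)\,\d t\neq 0$ \emph{and} $u(q_t)\neq 0$ for some $t\in[0,T]$.

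I expect this positivity to be the main obstacle, and I would settle it using the non-degeneracy of the dynamics. By~\cref{assumption:ex_mes_inv} the Fokker--Planck operator is uniformly elliptic with smooth coefficients on $\torus^d$, so $\mu$ has a smooth, everywhere positive density and hence charges every non-empty open set; since $R,S,u$ are continuous and not identically zero, the sets $\{R\neq 0\}$, $\{S\neq 0\}$ and $\{u\neq 0\}$ are non-empty and open. One then builds a continuous target path $\phi\colon[0,T]\to\torus^d$ that starts in $\{S\neq 0\}$, dwells for a short time in $\{u\neq 0\}$, and spends the remaining (overwhelming) part of $[0,T]$ inside a small ball on which $R$ keeps a constant sign, so that $\int_0^T R(\phi(t))\,\d t\neq 0$ (the short transient contributing less in absolute value than the dominant part) while $\phi$ also visits $\{u\neq 0\}$. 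As the maps $q_\cdot\mapsto S(q_0)$, $q_\cdot\mapsto\int_0^T R(q_t)\,\d t$ and $q_\cdot\mapsto\int_0^T|u(q_t)|^2\,\d t$ are continuous on $C([0,T];\torus^d)$, the three strict inequalities persist on a uniform $\eps$-neighbourhood of $\phi$; by the Stroock--Varadhan support theorem (applicable since $\sigma\sigma^\top$ is uniformly non-degenerate with smooth coefficients) this neighbourhood has positive probability under the law of $(q_t)_{t\in[0,T]}$ started from any point within $\eps/2$ of $\phi(0)$, and integrating against $\mu$ over a small ball of such initial conditions, which has positive $\mu$-measure, yields the claim. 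Finally, for (iv): a smooth, strictly convex and coercive function on $\real$ attains its infimum at a unique point, necessarily an interior critical point; hence $\alpha_T^*$ exists, satisfies $F_T'(\alpha_T^*)=0$, and strict convexity (equivalently, $F_T'$ strictly increasing) guarantees that this equation has no other solution.
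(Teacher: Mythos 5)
Your proposal is correct and follows essentially the same route as the paper: dominated convergence with exponential moments of $X_T$ (via the Novikov/Doléans--Dade bound) for smoothness, reduction of both strict convexity and coercivity (the latter via Fatou) to the positivity of $\proba\bigl[\Phi_T(q)>0 \text{ and } \langle X\rangle_T>0\bigr]$, and a tube/controllability argument for a path that starts in $\{S\neq 0\}$, visits $\{u\neq 0\}$, and dwells where $R$ keeps a constant sign. The only cosmetic differences are that you invoke the Stroock--Varadhan support theorem where the paper cites a controllability lemma of the same nature, and you argue strict convexity directly rather than by contradiction.
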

\begin{proof}
    We decompose the proof into 3 steps corresponding to the three main properties of $F_T$.

    \paragraph{Differentiability}
    We show that for all $T > 0$, the function $F_T$ is differentiable.
    To this end, we use a dominated convergence argument to differentiate under the expectation,
    see~\cite[Theorem~2.27]{MR1681462}.
    Introduce
    \[
        g(\alpha) = \Phi_T(q) \exp\left( \alpha X_T + \frac{\alpha^2}{2} \langle X \rangle_T \right).
    \]
    For a fixed realization $\omega \in \Omega$,
    it is clear that the function $\alpha \mapsto g(\alpha)$ is differentiable with derivative
    \begin{equation}
        \label{eq:pointwise_derivative}
        \frac{\partial g}{\partial \alpha} (\alpha)
        = \Phi_T(q) \bigl(X_T + \alpha \langle X \rangle_T\bigr) \exp\left( \alpha X_T + \frac{\alpha^2}{2} \langle X \rangle_T \right).
    \end{equation}
    Fix $m \in (0, \infty)$.
    In order to show~\eqref{eq:first_derivative},
    it suffices to show that there exist a random variable~$Y$ and a constant~$C \geq 0$
    independent of~$\alpha$ such that,
    for all $\alpha \in [-m, m]$,
    it holds
    \begin{align}
        \label{eq:deriv_sous_int}
        \expect\bigl\lvert g(\alpha) \bigr\rvert \leq C < \infty,
        \qquad
        \left\lvert \frac{\partial g}{\partial \alpha} (\alpha) \right\rvert \leq Y,
        \qquad
        \expect [Y] < \infty.
    \end{align}
    For the first inequality,
    we note that
    \begin{align}
        \notag
        \expect \bigl\lvert g(\alpha) \bigr\rvert &\leq
        \bigl(\|S\|_{L^\infty}\|R\|_{L^\infty}T \bigr)^2 \expect\!\left[\exp\left( \alpha X_T + \frac{\alpha^2}{2} \langle X \rangle_T \right) \right] \\
        \notag
        &\leq \bigl(\|S\|_{L^\infty}\|R\|_{L^\infty}T \bigr)^2 \expect\!\left[\exp\left( \alpha X_T - \frac{\alpha^2}{2} \langle X \rangle_T \right) \exp\left( \alpha^2\langle X \rangle_T \right)\right] \\
        \label{eq:intermediaire_jpp}
        &\leq \bigl(\|S\|_{L^\infty}\|R\|_{L^\infty}T \bigr)^2 \exp\left( m^2 \|u\|_{L^\infty}^2T\right) \expect\!\left[\exp\left( \alpha X_T - \frac{\alpha^2}{2} \langle X \rangle_T \right) \right] =: C.
    \end{align}
    The term on the right-hand side is the expectation
    of the Doléans-Dade exponential at $T$ of $X$,
    which is equal to 1 so that~$C$ in~\eqref{eq:deriv_sous_int} is given by~\eqref{eq:intermediaire_jpp}.
    In view of~\eqref{eq:pointwise_derivative},
    we have, for all~$\alpha \in [-m, m]$,
    \begin{align*}
        \left\lvert \frac{\partial g}{\partial \alpha} (\alpha) \right\rvert
        &\leq
            \mathbf 1_{\{X_T \geq 0\}} \Phi_T(q) \Bigl( X_T + m \langle X \rangle_T \Bigr)
            \exp\left( m X_T + \frac{m^2}{2} \langle X \rangle_T \right) \\
        &\qquad +
            \mathbf 1_{\{X_T < 0\}} \Phi_T(q) \Bigl( \lvert X_T \rvert  + m \langle X \rangle_T \Bigr)
            \exp\left( \frac{m^2}{2} \langle X \rangle_T \right)
        =: Y_1 + Y_2.
    \end{align*}
    It remains to show that $\expect[Y] < \infty$ with $Y = Y_1 + Y_2$.
    We only prove that $\expect[Y_1] < \infty$,
    since the result for $Y_2$ is obtained by a similar reasoning.
    By the Cauchy--Schwarz inequality,
    
    \begin{align*}
        \expect[Y_1]
        \leq
        \expect \left[ \Phi_T(q)^{4} \right]^{\frac{1}{4}}
        \expect \Bigl[ \bigl( X_T + m \langle X \rangle_T \bigr)^4 \Bigr] ^{\frac{1}{4}}
        \expect \Bigl[ \exp\left( 2m  \lvert X_T \rvert + m^2 \langle X \rangle_T \right) \Bigr]^{\frac{1}{2}}.
    \end{align*}
    The first two factors are finite,
    because $R$, $S$ and $u$ are bounded,
    and moments of the It\^o integral~$X_T$ are finite by the Burkholder--Davis--Gundy inequality~\cite{MR2380366} since $u$ is bounded. 
    Indeed, 
    we have, 
    for any~$p\in [1,+\infty)$ and~$g \in\mathcal{C}^\infty(\torus^d)$,
    \begin{align}
        \label{eq:bdg}
        \expect\!\left[\left|\int_{0}^{T}g(q_t)\,\d W_t\right|^p\right] \leq \cbdg{p} \expect\!\left[\left(\int_{0}^{T}\left|g(q_t)\right|^2 \,\d t\right)^{p/2}\right].
    \end{align}
    The last factor is also bounded since
    \begin{align*}
        \expect \Bigl[ \exp\left( 2m X_T + m^2 \langle X \rangle_T \right) \Bigr]
        &= \expect \Bigl[ \exp \left( 2 m^2 \langle X \rangle_T \right) \exp\left( 2m X_T - 2 m^2 \langle X \rangle_T \right) \Bigr] \\
        &\leq \expect \Bigl[ \exp \left( 2 C m^2 \right) \exp\left( 2m X_T - 2 m^2 \langle X \rangle_T \right) \Bigr],
    \end{align*}
    with~$C := T \norm{u}_{L^{\infty}}^2$.
    The expectation of the second exponential on the right-hand side is equal to 1,
    as this term is an exponential martingale by~\cite[Section 3.7]{MR3288096}.
    A similar reasoning,
    which we do not make precise,
    applies for higher order derivatives,
    giving the smoothness of~$F_T$ and the formulas~\eqref{eq:second_derivative} and~\eqref{eq:third_derivative}.
\paragraph{Strict convexity}
    Both terms in the definition~\eqref{eq:second_derivative} of $F_T''(\alpha)$ are nonnegative which already shows that $F_T$ is convex.
    To show that the convexity is strict,
    suppose there is~$\alpha$ such that~$F_T''(\alpha) = 0$.
    Then
    \[
        \expect \left[\Phi_T(q) \langle X \rangle_T \exp\left( \alpha X_T + \frac{\alpha^2}{2} \langle X \rangle_T \right) \right] = 0,
    \]
    and so the argument of the expectation, being non-negative,
    must be zero almost surely.
    The exponential is almost surely non-zero,
    so
    \[
        \langle X \rangle_T \Phi_T(q) = \langle X \rangle_T \left( \int_{0}^T R(q_t) S(q_0) \, \d t \right)^2 \stackrel{\rm a.s.}{=} 0,
    \]
    and so
    \[
        \langle X \rangle_T S(q_0) \int_{0}^T R(q_t) \, \d t
        \stackrel{\rm a.s.}{=} 0.
    \]
    Given~\cref{assump:fc_nonzero},
    it is possible to consider $(q_S, q_R, q_U) \in \torus^d \times \torus^d \times \torus^d$ and $\varepsilon > 0$ such that
    \begin{align*}
        &\forall q \in B(q_S, \varepsilon), \qquad \bigl\lvert S(q) \bigr\rvert > 0, \\
        &\forall q \in B(q_R, \varepsilon), \qquad \bigl\lvert R(q) \bigr\rvert \geq \frac{1}{2}\norm{R}_{L^{\infty}(\mu)}, \\
        & \forall q \in B(q_U, \varepsilon), \qquad \bigl\lvert u(q) \bigr\rvert > 0.
    \end{align*}
    In particular, since $R$ is continuous,
    it does not change sign in $B(q_R, \varepsilon)$.
    Given~\cref{assump:mes_inv_ci} and using a controllability argument of the same type as in~\cite[Lemma~3.4]{MR1931266},
    which hinges on the fact that the probability that a Brownian motion stays confined in a narrow tube around any continuous function starting at $0$ is strictly positive,
    it is possible to show that~$\proba \left[ A \right] > 0$,
    with
    \[
        A := \left\{q_0 \in B(q_S, \varepsilon) \,\, \text{and} \,\, q_{T/8} \in B(q_U, \varepsilon) \,\, \text{and} \,\, q_t \in B(q_R, \varepsilon) \,\, \text{for all} \,\, \frac{T}{4} \leq t \leq T\right\}
    \]
    This leads to a contradiction,
    because for any $\omega$ in this event of positive probability,
    it holds by construction that $\langle X \rangle_T > 0$ and $S(q_0) \neq 0$ and
    \[
        \left\lvert \int_{0}^T R(q_t) \, \d t \right\rvert
        \geq \left\lvert \int_{\frac{T}{4}}^T R(q_t) \, \d t \right\rvert
        - \left\lvert \int_{0}^{\frac{T}{4}} R(q_t) \, \d t \right\rvert
        \geq \frac{3T}{8} \norm{R}_{L^{\infty}(\mu)} - \frac{T}{4} \norm{R}_{L^{\infty}(\mu)} > 0.
    \]
    \paragraph{Coercivity}
    Fix $T>0$.
    We showed previously that the event
    \[
        A \subset B := \Bigl\{\omega \in \Omega : \Phi_T(q) > 0 \,\, \text{and} \,\, \langle X \rangle_T > 0 \Bigr\}.
    \]
    has positive probability.
    Let
    \[
        S_T(\alpha) = \mathbf 1_{B} \Phi_T(q) \exp\left( \alpha X_T + \frac{\alpha^2}{2} \langle X_t \rangle_T \right),
    \]
    and note that
    \begin{equation}
        \label{eq:limits}
        \lim_{|\alpha| \to \infty} S_T(\alpha)
        =
        \begin{cases}
            \infty \quad & \text{if $\omega \in B$}, \\
            0 \quad & \text{if $\omega \notin B$}. \\
        \end{cases}
    \end{equation}
    Consider a sequence $(\alpha_n)_{n \in \nat}$ with~$\alpha_n \rightarrow +\infty$ as~$n \rightarrow +\infty$.
    Then, 
    by definition of $F_T(\alpha)$,
    Fatou's lemma and~\eqref{eq:limits},
    it holds that
    \[
        \liminf_{n \to \infty} F_T(\alpha_n)
        \geq
        \liminf_{n \to \infty} \expect \bigl[ S_T(\alpha_n) \bigr]
        \geq \expect \bigl[ \liminf_{n\to \infty} S_T(\alpha_n) \bigr]
        = \proba[A] \times \infty = \infty,
    \]
    which gives the coercivity. 
    \paragraph{Conclusion of the proof} The function $F_T$ is strictly convex and coercive,
    which implies the existence and uniqueness of $\alpha^{*}_T$ which minimizes $F_T$,
    and concludes the proof.
\end{proof}
\subsection{Estimator of the optimal value}
\label{sec:asymptotic}
\cref{prop:differentiation} establishes the existence and uniqueness of a value~$\alpha^*_T$
for which that the variance of the Girsanov estimator~\eqref{eq:many_replica} is minimized.
In this section,
which is the main contribution of this work,
we study the asymptotic behavior with respect to the time $T$ of this optimal value.
Because the function $F_T$ is smooth
and $\alpha_T^*$ is expected to vanish in the limit $T \rightarrow +\infty$,
we anticipate that a Taylor expansion of the form
\begin{align}
    \label{eq:taylor_naive}
    F_T'(\alpha_T^*) \simeq F_T'(0) + \alpha_T^* F_T''(0)
\end{align}
should be accurate when $T \gg 1$.
Such an expansion is convenient as it allows us to define an approximation~$\widehat \alpha^*_T$ of~$\alpha^*_T$
as the root of the right-hand side of~\eqref{eq:taylor_naive},
namely
\begin{align}
    \label{estimator-alpha}
    \widehat{\alpha}_T^* = -\frac{F_T'(0)}{F_T''(0)}\cdot
\end{align}
This approximation is well-defined given that $F''_T(0) \neq 0$ by~\cref{proposition:first_second_moment_girsanov}.
The value~\eqref{estimator-alpha} can be calculated based on the unbiased dynamics only.
By using the explicit expressions of~$F'_T(0)$ and~$F''_T(0)$ given in~\eqref{eq:first_derivative} and~\eqref{eq:second_derivative},
we obtain
\begin{align}
    \label{eq:explicit_approx_alpha}
    \widehat{\alpha}_T^* = \frac{\displaystyle\expect\left[\Phi_T(q)\int_0^T u(q_t)\cdot\d W_t\right]}{\displaystyle\expect\!\left[ \Phi_T(q)\left(\int_0^T |u(q_t)|^2\,\d t +\left(\int_0^T u(q_t)\cdot\d W_t\right)^2\right)\right]},
\end{align}
with~$\Phi_T$ defined in~\eqref{eq:definition_phi}. 
The remainder of this section is organized as follows.
In~\cref{sec:asymptotic_behavior},
we study the asymptotic behavior with respect to~$T$ of $F_T'(0)$ and $F_T''(0)$ (see~\cref{lemmaN,lemmaD}),
as well as that of the third derivative~$F_T^{(3)}(\xi)$ for small~$\xi$ (see~\cref{lemme:deriv3}).
These estimates enable to prove, in~\cref{sec:limit-communation},
quantitative estimates on $\widehat \alpha^*_T$ and then~$\alpha^*_T$.
\subsubsection{Asymptotic estimations}
\label{sec:asymptotic_behavior}
By a Taylor expansion, for all $a \in \real$ there exists $\xi_{a,T} \in \left[-\frac{|a|}{T}, \frac{|a|}{T}\right]$ such that
\begin{align}
    \label{eq:taylor_expansion}
    F_T\left(\frac{a}{T}\right) - F_T(0) = \frac{a}{T}F'_T(0) + \frac{a^2}{2 T^2}F''_T(0) + \frac{a^3}{6 T^3}F^{(3)}_T(\xi_{a,T}).
\end{align}
The three following lemmas (\cref{lemmaN,lemmaD,lemme:deriv3}) give the scaling with respect to~$T$ of each of the three derivatives on the right-hand side of~\eqref{eq:taylor_expansion}.
\begin{lemma}
    \label{lemmaN}
    Under~\cref{assumption:inv_decay,assump:fc_nonzero,assump:mes_inv_ci} the following limit holds:
    \begin{align}
        \lim_{T \to \infty} \frac{F'_T(0)}{T}
        = D_1 ,
          \label{eq:estimate_num}
    \end{align}
    where
    \begin{align*}
        D_1 &= -2\langle S^2, \mathcal{R}\rangle\langle u, \sigma^\top\nabla\mathcal{R}\rangle \\
        &\quad+\|S\|^2\Bigl(2\left\langle \sigma^\top\nabla\mathcal{L}^{-1}\Pi\left(u^\top\sigma^\top\nabla \mathcal{R}\right), \sigma^\top\nabla \mathcal{R}\right\rangle + \left\langle \sigma^\top\nabla\mathcal{L}^{-1}\Pi\left(|\sigma^\top\nabla \mathcal{R}|^2\right), u\right\rangle\Bigr),
    \end{align*}
    with $\mathcal{R}$ the unique solution in~$L^2_0(\mu)$ of the Poisson equation $-\mathcal{L}\mathcal{R} = R$.
\end{lemma}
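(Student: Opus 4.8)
The plan is to reduce the statement to the computation of a single time-correlation limit, and then to evaluate that limit through a double application of It\^o's formula together with the Poisson equation and the exponential decay of the semigroup from~\cref{assumption:inv_decay}. First, writing $\Phi_T(q) = S(q_0)^2 Y_T^2$ with $Y_t := \int_0^t R(q_s)\,\d s$, we have $F_T'(0) = \expect[\Phi_T(q) X_T]$ by~\eqref{eq:first_derivative}. Since the process $t \mapsto S(q_0)^2 Y_t^2$ vanishes at $t=0$ and has differential $2\,S(q_0)^2 Y_t R(q_t)\,\d t$, we may substitute $\Phi_T(q) = 2\int_0^T S(q_0)^2 Y_t R(q_t)\,\d t$; using Fubini's theorem together with the martingale property $\expect[X_T \mid \mathcal F_t] = X_t$, this yields the identity
\[
    F_T'(0) = 2\int_0^T \expect\left[S(q_0)^2\, Y_t\, R(q_t)\, X_t\right]\,\d t .
\]
By Ces\`aro averaging it therefore suffices to prove that $\psi(t) := \expect\left[S(q_0)^2 X_t Y_t R(q_t)\right]$ converges, as $t \to +\infty$, to $D_1/2$.

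The main tool is the following correlation estimate: given $\zeta \in \mathcal C^\infty(\torus^d, \real^m)$, a bounded $\mathcal F_0$-measurable random variable $G = G(q_0)$, and a smooth observable $\phi$, the martingale $N_s := \int_0^s \zeta(q_v) \cdot \d W_v$ satisfies
\[
    \lim_{s \to +\infty} \expect\left[ G\, N_s\, \phi(q_s) \right] = -\expect_\mu[G]\,\left\langle \zeta, \sigma^\top \nabla \mathcal L^{-1} \Pi \phi \right\rangle .
\]
This follows by inserting the martingale representation $\Pi\phi(q_s) = (\ee^{s\mathcal L}\Pi\phi)(q_0) + \int_0^s (\sigma^\top\nabla \ee^{(s-v)\mathcal L}\Pi\phi)(q_v)\cdot \d W_v$, applying It\^o's isometry, substituting $r = s-v$, and letting $s \to +\infty$: for each fixed $r$ the two arguments decorrelate, so that $\expect[G\,(\zeta\cdot\sigma^\top\nabla \ee^{r\mathcal L}\Pi\phi)(q_{s-r})] \to \expect_\mu[G]\,\langle \zeta, \sigma^\top\nabla \ee^{r\mathcal L}\Pi\phi\rangle$, while the exponential decay of $\ee^{r\mathcal L}$ on $L^2_0(\mu)$ together with parabolic smoothing (which makes $\|\sigma^\top\nabla\ee^{r\mathcal L}g\|$ decay exponentially for smooth $g \in L^2_0(\mu)$) provides an integrable dominating function, and $\int_0^\infty \ee^{r\mathcal L}\Pi\phi\,\d r = -\mathcal L^{-1}\Pi\phi$. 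The same computation also gives the uniform bound $\sup_s |\expect[G X_s \phi(q_s)]| \le C\,\|\Pi\phi\|_{H^1(\mu)}$, which is needed to justify the dominated-convergence steps below.

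To compute $\lim_t \psi(t)$ we use It\^o's formula and the Poisson equation $-\mathcal L\mathcal R = R$ to write $Y_t = \mathcal R(q_0) - \mathcal R(q_t) + M_t$, with $M_t := \int_0^t (\sigma^\top\nabla\mathcal R)(q_s)\cdot\d W_s$, and split $\psi(t)$ accordingly. The pieces containing $\mathcal R(q_0)$ and $\mathcal R(q_t)$ are of the form $\expect[G X_t \phi(q_t)]$ (with $G = S(q_0)^2\mathcal R(q_0)$, $\phi = R$; respectively $G = S(q_0)^2$, $\phi = \mathcal R R$), so the correlation estimate applies directly, using $\mathcal L^{-1}R = -\mathcal R$. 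The remaining piece $C(t) := \expect[S(q_0)^2 X_t M_t R(q_t)]$ is the main obstacle, since it involves the product of two stochastic integrals together with the observable evaluated at the final time. It is handled by a second application of It\^o's formula: setting $\rho_s := \ee^{(t-s)\mathcal L}R$, the process $s \mapsto \rho_s(q_s)$ is a martingale with $\rho_t(q_t) = R(q_t)$, so expanding $\d\bigl(X_s M_s\,\rho_s(q_s)\bigr)$ over $[0,t]$ and taking expectations (the stochastic integrals vanish in expectation thanks to moment bounds on $X$, $M$ and uniform bounds on $\rho_s$ and $\sigma^\top\nabla\rho_s$) expresses $C(t)$ as a sum of three time integrals whose integrands, after the change of variables $r = t-s$, are of the form $\expect[S(q_0)^2\,\phi_r(q_{t-r})]$, $\expect[S(q_0)^2 X_{t-r}\phi_r(q_{t-r})]$ and $\expect[S(q_0)^2 M_{t-r}\phi_r(q_{t-r})]$, with each $\phi_r$ built from $\ee^{r\mathcal L}R$ and hence $O(\ee^{-\lambda r})$. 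Passing to the limit $t \to +\infty$ inside these integrals (dominated convergence, using the uniform bound above and the decorrelation as $t-r \to +\infty$) and using $\int_0^\infty \ee^{r\mathcal L}R\,\d r = \mathcal R$ yields $\lim_t C(t)$ explicitly in terms of $\langle u, \sigma^\top\nabla\mathcal L^{-1}\Pi(|\sigma^\top\nabla\mathcal R|^2)\rangle$, $\langle \sigma^\top\nabla\mathcal R, \sigma^\top\nabla\mathcal L^{-1}\Pi(u^\top\sigma^\top\nabla\mathcal R)\rangle$ and $\langle \mathcal R, u^\top\sigma^\top\nabla\mathcal R\rangle$.

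Finally, adding the three limits, the only genuine algebraic manipulation is to rewrite $\langle u, \sigma^\top\nabla\mathcal L^{-1}\Pi(\mathcal R R)\rangle$ by means of the carr\'e-du-champ identity $|\sigma^\top\nabla\mathcal R|^2 = \mathcal L\mathcal R^2 - 2\mathcal R\mathcal L\mathcal R$ (already used in the proof of~\cref{proposition:asymptotic_variance_gk}), which gives $\mathcal R R = \frac12 |\sigma^\top\nabla\mathcal R|^2 - \frac12\mathcal L\mathcal R^2$ and hence $\sigma^\top\nabla\mathcal L^{-1}\Pi(\mathcal R R) = \frac12\,\sigma^\top\nabla\mathcal L^{-1}\Pi(|\sigma^\top\nabla\mathcal R|^2) - \mathcal R\,\sigma^\top\nabla\mathcal R$. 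After this substitution the single-integral terms $\langle\mathcal R, u^\top\sigma^\top\nabla\mathcal R\rangle$ cancel and one is left with exactly $2\,\lim_t\psi(t) = D_1$. I expect the main difficulties to be the treatment of $C(t)$ via this second-order It\^o argument, and the careful justification, through the semigroup-decay and parabolic-smoothing estimates, of the various interchanges of limit, expectation and time integral.
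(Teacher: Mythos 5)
Your argument is correct in its architecture and the algebra does recombine to $D_1$, but it follows a genuinely different route from the paper. The paper keeps everything at the final time $T$: it substitutes $\int_0^T R(q_t)\,\d t = \mathcal R(q_0)-\mathcal R(q_T)+M_T$ into $\expect[\Phi_T X_T]$, expands the square into three terms, and divides by $T$ only at the end; the delicate piece is then the triple product of stochastic integrals $\expect[S(q_0)^2 X_T M_T^2]$, which is handled by a dedicated third-moment identity (\cref{lemme:egalite-moment-3}) followed by Poisson equations and ergodic averaging, while the cross term requires an ad hoc splitting of the time interval at $T-\sqrt{T}$ to decouple $\mathcal R(q_T)$ from the stochastic integrals. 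You instead linearize $Y_T^2=2\int_0^T Y_tR(q_t)\,\d t$, use the martingale property of $X$ to localize $X_T$ to $X_t$, and reduce the statement via Ces\`aro to the convergence of a single correlation $\psi(t)$; your hardest piece, $C(t)=\expect[S(q_0)^2X_tM_tR(q_t)]$, is then resolved by a second It\^o expansion against the backward-evolved observable $\ee^{(t-s)\mathcal L}R$. This buys a strictly stronger conclusion (the correlation itself converges, not merely its Ces\`aro mean) and replaces the $T-\sqrt{T}$ surgery by a cleaner martingale-representation lemma. The price is analytic: your dominating functions require the gradient-smoothing bound $\|\sigma^\top\nabla\ee^{r\mathcal L}g\|\le C\ee^{-\lambda r}\|g\|$ (and its use inside the $r$-integrals defining $C(t)$), which does not follow from \cref{assumption:inv_decay} alone --- the latter only controls $\ee^{t\mathcal L}$ on $L^2_0(\mu)$ --- and must be derived from the ellipticity and smoothness in \cref{assumption:ex_mes_inv}; this is standard for non-degenerate diffusions on the torus but should be stated and proved, whereas the paper's route stays within It\^o isometry and the Burkholder--Davis--Gundy inequality. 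Make sure also to record the sign $X_t=-\int_0^t u(q_s)\cdot\d W_s$ consistently when you apply your correlation lemma with $\zeta=-u$, since this is where the relative signs of the three contributions to $D_1$ are decided.
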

\begin{proof}
    Note that the Poisson equation is well-posed because~$R \in L^2_0(\mu)$.
    Moreover by elliptic regularity $\mathcal{R}\in\mathcal{C}^\infty(\torus^d)$, 
    so that~$\norm{\mathcal{R}}_{L^\infty}<+\infty$ in particular.
    Recall from~\cref{prop:differentiation} that
    \begin{equation}
        \label{eq:first_equation_num}
        -F'_T(0) = \expect\!\left[S(q_0)^2 \int_0^T u(q_t) \cdot \d W_t \left(\int_0^T R(q_t) \, \d t \right)^2\right].
    \end{equation}
    We use It\^o's formula to write
    \[
        \int_0^T R(q_t) \,\d t = -\int_0^T \mathcal{L}\mathcal{R}(q_t) \,\d t = -\Bigl(\mathcal{R}(q_T) - \mathcal{R}(q_0)\Bigr)
        +  \int_0^T \nabla\mathcal{R}(q_t)^\top \sigma(q_t) \, \d W_t.
    \]
    By substituting in~\eqref{eq:first_equation_num}, we obtain
    \begin{align*}
        -F'_T(0) &= \expect\!\left[S(q_0)^2 \int_0^T u(q_t) \cdot \d W_t \Bigl(\mathcal{R}(q_T) - \mathcal{R}(q_0)\Bigr)^2\right] \\
        & \quad - 2\expect\!\left[S(q_0)^2 \int_0^T u(q_t) \cdot \d W_t \,\Bigl(\mathcal{R}(q_T) - \mathcal{R}(q_0)\Bigr) \int_0^T \nabla\mathcal{R}(q_t)^\top \sigma(q_t) \, \d W_t\right] \\
        & \quad + \expect\!\left[S(q_0)^2 \int_0^T u(q_t) \cdot \d W_t \,\left(\int_0^T \nabla\mathcal{R}(q_t)^\top \sigma(q_t)\,\d W_t\right)^2\right] \\
        &=: A_1(T) + A_2(T) + A_3(T).
    \end{align*}
    \paragraph{Bounding the first term}
    Since $u$, $S$ and $\mathcal R$ are bounded uniformly on the torus,
    we obtain by the Cauchy--Schwarz inequality and It\^o's isometry that
    \begin{align*}
        \frac1T \left\lvert A_1(T)\right\rvert & \le \frac1T \sqrt{\expect\!\left[S(q_0)^4  \Bigl(\mathcal{R}(q_T) - \mathcal{R}(q_0)\Bigr)^4\right]} \sqrt{\expect\!\left[\int_0^T | u(q_t) |^2 \d t \right]} \\
        & \le \frac4T \|S\|_{L^\infty}^2  \|\mathcal{R}\|_{L^\infty}^2 \|u\|_{L^\infty} \sqrt{T} \xrightarrow[T \to \infty]{} 0.
    \end{align*}

    \paragraph{Bounding the second term}
    The second term can itself be divided into two contributions:
    \begin{align*}
        A_2(T) &=  -2\expect\!\left[S(q_0)^2 \mathcal{R}(q_T) \int_0^T u(q_t) \cdot \d W_t  \int_0^T \nabla\mathcal{R}(q_t)^\top \sigma(q_t) \, \d W_t\right] \\
        & \quad + 2\expect\!\left[S(q_0)^2 \mathcal{R}(q_0) \int_0^T u(q_t) \cdot \d W_t  \int_0^T \nabla\mathcal{R}(q_t)^\top \sigma(q_t) \, \d W_t \right]\\
        & =: -2 B_1(T) + 2 B_2(T).
    \end{align*}
    For~$B_1(T)$, let us introduce~$\displaystyle I_T = \int_0^T u(q_t) \cdot \d W_t$
    and~$\displaystyle J_T = \int_0^T \nabla\mathcal{R}(q_t)^\top \sigma(q_t) \, \d W_t$.
    We have that
    $$
    I_T J_T =
    I_{T - \sqrt{T}} J_{T - \sqrt{T}} + \left( I_T J_T - I_{T-\sqrt{T}} J_{T-\sqrt{T}} \right).
    $$
    Let $(\mathcal F_t^q)_{t\geq 0}$ denote the natural filtration generated by $(q_t)_{t\geq0}$.
    By conditioning on~$\mathcal F_{T-\sqrt{T}}$,
    the sigma algebra which contains all information up to time $T - \sqrt{T}$,
    then using the tower property of conditional expectation
    and the definition of the semigroup~\eqref{eq:def_semigroup},
    we obtain
    \begin{align*}
        \frac1T\expect\!\left[S(q_0)^2 \mathcal{R}(q_T) I_{T - \sqrt{T}} J_{T - \sqrt{T}}\right]
        & = \frac1T\expect \left[ \expect \left[ S(q_0)^2  I_{T - \sqrt{T}} J_{T - \sqrt{T}} \mathcal R(q_{T}) \, \middle| \, \mathcal F_{T - \sqrt{T}}\right] \right] \\
        & = \frac1T\expect \left[ S(q_0)^2  I_{T - \sqrt{T}} J_{T - \sqrt{T}} \expect \left[  \mathcal R(q_{T}) \, \middle| \, \mathcal F_{T - \sqrt{T}}\right] \right] \\
        & = \frac1T\expect \left[ S(q_0)^2  I_{T - \sqrt{T}} J_{T - \sqrt{T}} \left( \ee^{\sqrt{T}\mathcal L}\mathcal{R} \right) (q_{T-\sqrt{T}}) \right].
    \end{align*}
    Noting that the random variable~$q_{T - \sqrt{T}}$ is distributed according to~$\mu$ by stationarity~(\cref{assump:mes_inv_ci}),
    and using~\eqref{eq:bdg},
    we obtain
    \begin{align*}
        &\left\lvert \frac1T\expect\!\left[S(q_0)^2 \mathcal{R}(q_T) I_{T - \sqrt{T}} J_{T - \sqrt{T}}\right] \right\rvert \\
         &\qquad\le  \frac{\|S\|^2_{L^\infty}}{T} \left(\expect\!\left[|I_{T - \sqrt{T}}|^4\right] \right)^{\frac14}\left(\expect\!\left[|J_{T - \sqrt{T}}|^4\right]\right)^{\frac14} \left\|\ee^{\sqrt{T}\mathcal L}\mathcal{R}\right\| \\
         &\qquad\le \cdecay \sqrt{\cbdg{4}}\|S\|^2_{L^\infty} \|u\|_{L^\infty} \|\sigma^\top\nabla \mathcal R\|_{L^\infty} \|\mathcal R\|\frac{T-\sqrt{T}}{T}\ee^{-\lambda\sqrt{T}},
    \end{align*}
    and the right-hand side tends to 0 as~$T \to \infty$.
    Here $\cbdg{4}$ is the constant from the BDG inequality~\eqref{eq:bdg},
    and we used \cref{assumption:inv_decay} on the decay of the semigroup.
    Next, writing
    \[
        I_T J_T - I_{T-\sqrt{T}} J_{T-\sqrt{T}} = I_T(J_{T} - J_{T-\sqrt{T}}) + J_{T-\sqrt{T}}(I_{T} - I_{T-\sqrt{T}}),
    \]
    we have, 
    by the Cauchy--Schwarz inequality and It\^o's isometry,
    \begin{align*}
        \left\lvert \frac1T\expect\Bigl[S(q_0)^2 \mathcal{R}(q_T) I_T(J_{T} - J_{T-\sqrt{T}})\Bigr] \right\rvert
        &\leq \frac1T\expect\Bigl[S(q_0)^2 \lvert \mathcal{R}(q_T) \rvert \, \lvert I_T \rvert \, \lvert J_{T} - J_{T-\sqrt{T}} \rvert\Bigr] \\
        & \le \frac{\|S\|^2_{L^\infty}  \| \mathcal R \|_{L^{\infty}}}{T} \sqrt{\expect\!\left[|I_T|^2\right]} \sqrt{\expect\!\left[|J_{T} - J_{T-\sqrt{T}}|^2\right]}\\
        & \le \frac{1}{T} \|S\|^2_{L^\infty} \|\mathcal R\|^2_{L^\infty} \|u\|_{L^\infty} \sqrt{T} \| \sigma^\top\nabla \mathcal R \|_{L^\infty} T^{1/4} \\
        & \xrightarrow[T \to \infty]{} 0.
    \end{align*}
    The same computations hold for the term with $J_{T-\sqrt{T}}(I_{T} - I_{T-\sqrt{T}})$,
    and so
    \begin{align*}
        \lim_{T \to \infty} \frac{B_1(T)}{T} = 0.
    \end{align*}
    For $B_2(T)$, we use Itô's isometry and~\cref{assumption:inv_decay} to obtain
    \begin{align*}
        \frac{B_2(T)}{T} &= \frac1 T \expect\!\left[S(q_0)^2 \mathcal{R}(q_0) \int_0^T \nabla\mathcal{R}(q_t)^\top  \sigma(q_t)  u(q_t) \, \d t \right] \\
                         &= \langle S^2, \mathcal{R}\rangle \langle u, \sigma^{\top} \nabla\mathcal{R}\rangle
                         + \frac{1}{T} \int_{0}^{T} \expect\!\left[ S(q_0)^2 \mathcal{R}(q_0) \Pi \left( \nabla\mathcal{R}^\top  \sigma  u \right)(q_t) \right] \, \d t \\
                         &= \langle S^2, \mathcal{R}\rangle \langle u, \sigma^\top \nabla\mathcal{R}\rangle
                         + \frac{1}{T} \int_{0}^{T} \Bigl\langle S^2 \mathcal{R}, \e^{t \mathcal L} \Pi \left( \nabla\mathcal{R}^\top  \sigma  u \right) \Bigr\rangle \, \d t \\
                         &\xrightarrow[T \to \infty]{} \langle S^2, \mathcal{R}\rangle \langle u, \sigma^\top \nabla\mathcal{R}\rangle.
    \end{align*}
    Therefore we have shown that
    \begin{align*}
        \lim_{T \to \infty} \frac{A_2(T)}{T}
            = 2\langle S^2, \mathcal{R}\rangle \langle u, \sigma^\top \nabla\mathcal{R}\rangle.
    \end{align*}
    \paragraph{Bounding the third term}
    As before, let $\expect^{q_0}$ denote the expectation with respect to all realizations of the Brownian motion $W_t$ in \eqref{eq:dyn_brow},
    for a dynamics starting at a deterministic initial condition~$q_0$.
    By conditioning on~$q_0$ we first note that
    \[
        \frac{A_3(T)}{T} = \expect\!\left[S(q_0)^2 \frac{\mathcal H_T(q_0)}{T} \right],
    \]
    with
    \[
    \mathcal H_T(q_0) := \expect^{q_0}\!\left[ \int_0^T u(q_t) \cdot \d W_t \,\left(\int_0^T \nabla\mathcal{R}(q_t)^\top \sigma(q_t) \, \d W_t\right)^2 \right].
    \]
    Introduce the solutions of the following two Poisson equations:
    \begin{align*}
        -\mathcal{L}\mathcal{F} = \mathfrak{f}, \qquad -\mathcal{L}\mathcal{G} = \mathfrak{g},
    \end{align*}
    with $\mathfrak{f} = \Pi\left( \nabla \mathcal{R}^\top \sigma u\right)$ and $\mathfrak{g} = \Pi \left|\sigma^\top \nabla \mathcal R \right|^2$.
    Hereafter we first show that
    \begin{equation}
        \label{eq:intermediate_third_term}
        \forall q_0 \in \torus^d, \qquad
        \frac{\mathcal H_T(q_0)}{T} \xrightarrow[T \to \infty]{} 2\left\langle\sigma^\top \nabla \mathcal{F}, \sigma^\top  \nabla \mathcal{R}\right\rangle + \left\langle\sigma^\top\nabla \mathcal{G}, u\right\rangle.
    \end{equation}
    To this end, using~\cref{lemme:egalite-moment-3} with $f = u$ and~$g = h = \sigma^\top \nabla\mathcal{R}$,
    we have that
    \begin{align}
        \notag
        \mathcal H_T(q_0)
        &= 2\expect^{q_0}\!\left[ \int_0^T \nabla \mathcal{R}(q_t)^\top \sigma(q_t) u(q_t) \,\d t\,\int_0^T \nabla\mathcal{R}(q_t)^\top \sigma(q_t) \, \d W_t\right] \\
        \label{eq:to_cauchy_schwarz}
        &\hspace{1cm}+ \expect^{q_0}\left[ \int_0^T \Bigl\lvert \sigma(q_t)^\top \nabla \mathcal{R}(q_t) \Bigr\rvert^2\,\d t \,\int_0^T u(q_t)\cdot\d W_t\right].
    \end{align}
    Because $\displaystyle\expect^{q_0}\left[\int_0^T \nabla\mathcal{R}(q_t)^\top \sigma(q_t) \, \d W_t\right] = \expect^{q_0}\left[ \int_0^T u(q_t)\cdot\d W_t\right] = 0$,
    it holds that
    \begin{align*}
        \mathcal H_T(q_0)
        &= 2\expect^{q_0}\left[ \int_0^T\mathfrak{f}(q_t)\,\d t\,\int_0^T \nabla\mathcal{R}(q_t)^\top \sigma(q_t) \, \d W_t\right] \\
        &\quad + \expect^{q_0}\left[ \int_0^T \mathfrak{g}(q_t)\,\d t\,\int_0^T u(q_t)\cdot\d W_t\right].
    \end{align*}
    We can again apply It\^o's formula to obtain
    \begin{align*}
        &\expect^{q_0}\left[ \int_0^T \mathfrak{f}(q_t)\,\d t\,\int_0^T \nabla\mathcal{R}(q_t)^\top \sigma(q_t) \,\d W_t\right] = \\
        &\qquad\qquad\qquad\expect^{q_0}\left[ \Bigl(\mathcal{F}(q_0) - \mathcal{F}(q_T)\Bigr)\int_0^T \nabla\mathcal{R}(q_t)^\top \sigma(q_t) \, \d W_t\right] \\
        &\qquad\qquad\qquad + \expect^{q_0}\left[\int_0^T \nabla\mathcal{F}(q_t)^\top \sigma(q_t) \, \d W_t\,\int_0^T\nabla\mathcal{R}(q_t)^\top \sigma(q_t) \, \d W_t\right].
    \end{align*}
    The first term on right-hand side is of order $\sqrt{T}$ at most since the functions~$\mathcal{F}$ and~$\sigma^\top\nabla\mathcal{R}$ are bounded on~$\torus^d$.
    Indeed, 
    by a Cauchy--Schwarz inequality and It\^o's isometry,
    \begin{align*}
        &\left\lvert \expect^{q_0}\left[ \Bigl(\mathcal{F}(q_0) - \mathcal{F}(q_T)\Bigr)\int_0^T \nabla\mathcal{R}(q_t)^\top \sigma(q_t) \, \d W_t\right]\right\rvert \\
        &\qquad\qquad\leq 2\|\mathcal{F}\|_{L^\infty} \left( \expect^{q_0}\left[ \left\lvert \int_0^T \sigma(q_t)\nabla\mathcal{R}(q_t)\cdot\d W_t\right\rvert^2 \right] \right)^{\frac{1}{2}}\\
        & \qquad\qquad = 2\|\mathcal{F}\|_{L^\infty} \left( \expect^{q_0}\left[  \int_0^T \left\lvert \nabla\mathcal{R}(q_t)^\top \sigma(q_t) \right\rvert^2\d t\right] \right)^{\frac{1}{2}}\\
        &\qquad\qquad \leq 2\|\mathcal{F}\|_{L^\infty} \|\sigma^\top \nabla\mathcal{R}\|_{L^\infty} \sqrt{T}.
    \end{align*}
    For the second term, by It\^o's isometry and ergodicity,
    \begin{align*}
        \lim_{T \to \infty}
        \frac1T \expect^{q_0}\left[\int_0^T \nabla\mathcal{F}(q_t)^\top \sigma(q_t) \, \d W_t\,\int_0^T\nabla\mathcal{R}(q_t)^\top \sigma(q_t) \, \d W_t\right]
        = \left\langle\sigma^\top \nabla \mathcal{F}, \sigma^\top \nabla \mathcal{R}\right\rangle.
    \end{align*}
    With similar computations, we obtain that
    \begin{align*}
        \lim_{T \to \infty} \frac1T \expect^{q_0}\left[ \int_0^T |\sigma(q_t)\nabla \mathcal{R}(q_t)|^2\,\d t\,\int_0^T u(q_t)\cdot\d W_t\right] = \left\langle\sigma^\top \nabla \mathcal{G}, u\right\rangle.
    \end{align*}
    We have thus proved~\eqref{eq:intermediate_third_term}.
    Using dominated convergence (the function~$\mathcal{H}_T/T$ being uniformly bounded, 
    which can be shown by computations similar to the ones used above),
    we finally have
    \begin{align*}
        \lim_{T \to \infty} \frac{A_3(T)}{T} = \|S\|^2\bigl(2\left\langle\sigma^\top \nabla \mathcal{F}, \sigma^\top  \nabla R\right\rangle + \left\langle\sigma^\top\nabla \mathcal{G}, u\right\rangle\bigr).
    \end{align*}
    \paragraph{Conclusion of the proof}
    By combining all the previous results, we can finally conclude that
    \begin{align*}
        \lim_{T \to \infty} \frac{F'_T(0)}{T} & = -2\langle S^2, \mathcal{R}\rangle \langle u, \sigma^\top\nabla\mathcal{R}\rangle -\|S\|^2\bigl(2\left\langle\sigma^\top\nabla \mathcal{F}, \sigma \nabla \mathcal{R}\right\rangle + \left\langle\sigma^\top\nabla \mathcal{G}, u\right\rangle\bigr),
    \end{align*}
    which gives the claimed expression for~$D_1$.
\end{proof}
The next lemma provides an asymptotic equivalent for the second derivative of~$F_T$ at~$0$.
The proof is slightly different from the one for the first derivative,
as it requires more sophisticated limit theorems.
\begin{lemma}
    \label{lemmaD}
    Under~\cref{assumption:inv_decay,assump:fc_nonzero,assump:mes_inv_ci} the following limit holds:
        \begin{align}
            \label{eq:equivalent_second_derivative}
            \lim_{T \to \infty}\frac{F''_T(0)}{T^2} = D_2,
         \end{align}
        where $D_2 = 2 \norm{S}^2 \left(\norm{\sigma^\top\nabla \mathcal{R}}^2 \norm{u}^2 + \myip{\sigma^\top\nabla \mathcal{R}, u}^2  \right)$,
        with $\mathcal{R}$ the unique solution in~$L^2_0(\mu)$ of the Poisson equation $- \mathcal L \mathcal{R} = R$.
        \newline
    \end{lemma}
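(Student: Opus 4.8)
The plan is to start from the expression for $F_T''(0)$ obtained by setting $\alpha = 0$ in~\eqref{eq:second_derivative}, namely
\[
    F_T''(0) = \expect\!\left[S(q_0)^2 \left(\int_0^T R(q_t)\,\d t\right)^{2} \langle X \rangle_T\right]
    + \expect\!\left[S(q_0)^2 \left(\int_0^T R(q_t)\,\d t\right)^{2} X_T^2\right],
\]
with $X_T$ and $\langle X\rangle_T$ as in~\eqref{eq:x_t}, and to identify the $T^2$-scaling of each of the two terms. As in the proof of~\cref{lemmaN}, I would use It\^o's formula to write $\int_0^T R(q_t)\,\d t = \mathcal{R}(q_0) - \mathcal{R}(q_T) + J_T$ with $J_T := \int_0^T \nabla\mathcal{R}(q_t)^\top\sigma(q_t)\,\d W_t$, and then verify that replacing $\int_0^T R(q_t)\,\d t$ by its martingale part~$J_T$ only incurs errors of order $o(T^2)$: using the uniform bound on~$\mathcal{R}$ from elliptic regularity, the Cauchy--Schwarz inequality, It\^o's isometry, and the estimate $\expect[X_T^4] = O(T^2)$ furnished by the Burkholder--Davis--Gundy inequality~\eqref{eq:bdg}, the boundary term and the cross term are seen to be $O(T^{3/2})$. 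It then remains to study $\expect[S(q_0)^2 J_T^2 \langle X\rangle_T]$ and $\expect[S(q_0)^2 J_T^2 X_T^2]$.

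Both of these I would treat by conditioning on~$q_0$ and invoking dominated convergence. Setting $k_T(q) := \expect^q[J_T^2\langle X\rangle_T]$ and $h_T(q) := \expect^q[J_T^2 X_T^2]$, stationarity~(\cref{assump:mes_inv_ci}) gives $\expect[S(q_0)^2 J_T^2\langle X\rangle_T] = \expect_\mu[S^2 k_T]$ and similarly with $h_T$, while~\eqref{eq:bdg} bounds $k_T/T^2$ and $h_T/T^2$ uniformly on~$\torus^d$. It thus suffices to compute the pointwise limits of $k_T(q)/T^2$ and $h_T(q)/T^2$ for ($\mu$-almost) every $q \in \torus^d$, and this is where the ``more sophisticated limit theorems'' intervene: the pair $(J_t, X_t)$ is a continuous $\real^2$-valued martingale with quadratic variation $\int_0^t C(q_s)\,\d s$, where
\[
    C = \begin{pmatrix} |\sigma^\top\nabla\mathcal{R}|^2 & -u\cdot\sigma^\top\nabla\mathcal{R} \\ -u\cdot\sigma^\top\nabla\mathcal{R} & |u|^2 \end{pmatrix}.
\]
By the ergodic theorem for additive functionals, $\frac{1}{T}\int_0^T C(q_s)\,\d s$ converges to $\Sigma := \expect_\mu[C]$, so that a bivariate martingale central limit theorem yields $(J_T/\sqrt T, X_T/\sqrt T) \Rightarrow (G_1, G_2) \sim \normal(0, \Sigma)$, and $\frac{1}{T}\langle X\rangle_T \to \norm{u}^2$ in probability. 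Combining these via Slutsky's lemma and the continuous mapping theorem, using once more~\eqref{eq:bdg} (which controls $\expect^q[|J_T|^p]$ and $\expect^q[|X_T|^p]$ by $O(T^{p/2})$, uniformly in~$q$) to secure uniform integrability, and applying Isserlis' theorem to the Gaussian vector $(G_1,G_2)$, I would obtain $k_T(q)/T^2 \to \norm{u}^2\expect[G_1^2] = \norm{u}^2\norm{\sigma^\top\nabla\mathcal{R}}^2$ and $h_T(q)/T^2 \to \expect[G_1^2 G_2^2] = \norm{\sigma^\top\nabla\mathcal{R}}^2\norm{u}^2 + 2\myip{u,\sigma^\top\nabla\mathcal{R}}^2$. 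Dominated convergence and the identity $\norm{S}^2 = \expect_\mu[S^2]$ then give
\[
    \frac{F_T''(0)}{T^2} \xrightarrow[T\to\infty]{} \norm{S}^2\norm{u}^2\norm{\sigma^\top\nabla\mathcal{R}}^2 + \norm{S}^2\Bigl(\norm{\sigma^\top\nabla\mathcal{R}}^2\norm{u}^2 + 2\myip{u,\sigma^\top\nabla\mathcal{R}}^2\Bigr) = D_2.
\]

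The main obstacle is to make the bivariate martingale central limit theorem and its accompanying uniform integrability rigorous in the present setting: one needs the law of large numbers for the quadratic variation to hold for the dynamics started at an arbitrary (or $\mu$-a.e.) fixed point — which follows from ergodicity and ellipticity, for instance via a Poisson-equation decomposition of $\int_0^T C(q_s)\,\d s - T\Sigma$ together with~\cref{assumption:inv_decay} — and one needs the moment bounds on $J_T$ and $X_T$ to be uniform in the starting point, so that the convergence in distribution of $(J_T/\sqrt T, X_T/\sqrt T)$ upgrades to convergence of the mixed fourth moment $\expect^q\bigl[(J_T/\sqrt T)^2(X_T/\sqrt T)^2\bigr]$; the Burkholder--Davis--Gundy inequality~\eqref{eq:bdg} supplies exactly these bounds. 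I note that the first term $\expect[S(q_0)^2 J_T^2\langle X\rangle_T]$ also admits a more elementary treatment: writing $\langle X\rangle_T = T\norm{u}^2 + \int_0^T \Pi(|u|^2)(q_t)\,\d t$ and using the variance estimate $\expect\bigl[(\int_0^T \Pi(|u|^2)(q_t)\,\d t)^2\bigr] = O(T)$ (as in~\cref{proposition:asymptotic_variance_gk}) together with the exact identity $\expect[S(q_0)^2 J_T^2] = \int_0^T \langle S^2, \ee^{t\mathcal{L}}|\sigma^\top\nabla\mathcal{R}|^2\rangle\,\d t = T\norm{S}^2\norm{\sigma^\top\nabla\mathcal{R}}^2 + O(1)$ — the martingale part of $J_t^2$ dropping out because $S(q_0)^2$ is $\mathcal{F}_0$-measurable, and the remainder being controlled by~\cref{assumption:inv_decay} — one recovers the limit $\norm{S}^2\norm{u}^2\norm{\sigma^\top\nabla\mathcal{R}}^2$ directly.
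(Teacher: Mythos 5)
Your proposal is correct and follows essentially the same route as the paper: decompose $F_T''(0)$ into the $\langle X\rangle_T$-term and the $X_T^2$-term, replace $\int_0^T R(q_t)\,\d t$ by its martingale part via It\^o's formula with $o(T^2)$ error, treat the mean part of $\langle X\rangle_T$ elementarily and its fluctuation via a Poisson equation, and handle the fourth-moment term $\expect[S(q_0)^2 J_T^2 X_T^2]$ by conditioning on $q_0$, a bivariate CLT, Isserlis' theorem, and dominated convergence with BDG-based uniform integrability. The only (immaterial) difference is that you invoke a martingale CLT for the pair of stochastic integrals $(J_T, X_T)$, whereas the paper's \cref{corollary:central_limit} is stated for the pair $(\frac{1}{\sqrt T}\int_0^T R(q_t)\,\d t, \frac{1}{\sqrt T}\int_0^T u(q_t)\cdot\d W_t)$ and performs the martingale reduction internally.
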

    \begin{proof}
        Recall that \eqref{eq:second_derivative} implies that
        \[
            F''_T(0) = \expect\!\left[ \left(\int_0^T |u(q_t)|^2 \, \d t +\left(\int_0^T u(q_t)\cdot\d W_t\right)^2\right)  \left(\int_{0}^{T} S(q_0) R(q_t) \, \d t\right)^2 \right].
        \]
        Introduce~$I_T = \displaystyle\int_{0}^{T} R(q_t) \, \d t$,
        so that~$F_T''(0)$ can be rewritten as
        \begin{align*}
            F''_T(0) =\,
            & T \norm{u}^2 \expect\Bigl[ S(q_0)^2 \lvert I_T \rvert^2 \Bigr]
            + \expect\Bigl[ S(q_0)^2 \lvert I_T \rvert^2 X_T^2 \Bigr] \\
            &+ \expect\Bigl[ S(q_0)^2 \lvert I_T \rvert^2 \int_0^T \Pi|u(q_t)|^2 \, \d t  \Bigr]
            =:\, A_1(T) + A_2(T) + A_3(T),
        \end{align*}
        where~$X_T$ is defined in~\cref{eq:x_t}.
        \paragraph{Bounding the first term}
        By It\^o's formula,
        it holds that
        \begin{equation}
            \label{eq:It\^o_formula}
            I_T = \mathcal{R}(q_0) - \mathcal{R}(q_T) + M_T,\qquad M_T = \int_{0}^T \nabla \mathcal{R}(q_t)^\top \sigma(q_t) \, \d W_t.
        \end{equation}
        Therefore,
        \begin{align}
            \notag
            \frac{A_1(T)}{T^2}
            = &\norm{u}^2 \expect\!\left[ S(q_0)^2 \left\lvert \frac{\mathcal{R}(q_0) - \mathcal{R}(q_T)}{\sqrt{T}} \right\rvert^2 \right]
            +  \norm{u}^2 \expect\!\left[  S(q_0)^2  \left(\frac{M_T}{\sqrt{T}}\right)^2 \right] \\
            \label{eq:first_term_three_terms}
              &+ 2\norm{u}^2 \expect\!\left[ S(q_0)^2  \left(\mathcal{R}(q_0) - \mathcal{R}(q_T)\right) \frac{M_T}{\sqrt{T}} \right].
        \end{align}
        The first term on the right-hand side of~\eqref{eq:first_term_three_terms} converges to 0 as~$T \to \infty$ because~$S$ and~$\mathcal{R}$ are bounded.
        For the second term,
        we use It\^o's isometry and the limit~$\ee^{t\mathcal{L}}f \rightarrow \int_{\torus^d} f \d \mu$ in~$L^2(\mu)$ for all $f \in L^2(\mu)$ to obtain
        \begin{align*}
            \norm{u}^2 \expect\!\left[ S(q_0)^2  \left(\frac{M_T}{\sqrt{T}}\right)^2 \right]
            &= \norm{u}^2 \expect\!\left[S(q_0)^2 \frac{1}{T} \int_{0}^T \lvert \sigma(q_t)^\top \nabla \mathcal{R}(q_t) \rvert^2 \, \d t \right] \\
            &\xrightarrow[T \to \infty]{}\norm{u}^2 \norm{S}^2 \norm{\sigma^\top \nabla \mathcal{R}}^2.
        \end{align*}
        Finally, by a Cauchy--Schwarz inequality,
        and the two previous convergence results, the third term in~\eqref{eq:first_term_three_terms} tends to 0.
        Therefore,
        \[
            \lim_{T \to \infty} \frac{A_1(T)}{T^2}
            = \norm{u}^2 \norm{S}^2 \norm{\sigma^\top \nabla \mathcal{R}}^2.
        \]
        \paragraph{Bounding the second term}
        For the second term, we note that
        \begin{align}
            \notag
            \frac{A_2(T)}{T^2}
            &= \expect\!\left[ S(q_0)^2 \left\lvert \frac{I_T}{\sqrt{T}} \right\rvert^2 \, \left\lvert \frac{Y_T}{\sqrt{T}} \right\rvert^2 \right]
            = \expect \Bigl[ S(q_0)^2 \mathcal H_T(q_0) \Bigr],
        \end{align}
        with~$\mathcal H_T(q_0) = \displaystyle\expect^{q_0}\!\left[ \left\lvert \frac{I_T}{\sqrt{T}} \right\rvert^2 \, \left\lvert \frac{Y_T}{\sqrt{T}} \right\rvert^2 \right]$. 
        By the central limit theorem for additive functionals of Markov processes
        (see~\cref{corollary:central_limit})
        it holds for all $q_0 \in \torus^d$ that
        \[
            \begin{pmatrix}
                Y_1^T \\
                Y_2^T
            \end{pmatrix}
            :=
            \begin{pmatrix}
            I_T / \sqrt{T} \\
            Y_T / \sqrt{T}
            \end{pmatrix}
            \xrightarrow[T \to \infty]{\rm Law}
            Y^{\infty} \sim
            \mathcal N (0, \Sigma), \qquad
                \Sigma :=
                \begin{pmatrix}
                    \|\sigma^\top \nabla \mathcal{R}\|^2 & \left\langle \sigma^\top \nabla \mathcal{R} , u\right\rangle \\
                \left\langle \sigma^\top \nabla \mathcal{R} , u\right\rangle &  \|u\|^2
                \end{pmatrix}.
        \]
        Using the same reasoning as in~\cite[Appendix C]{pavliotis2008multiscale},
        together with Isserlis' theorem,
        we have that~$\mathcal H_T(q_0)$ converges for every fixed~$q_0$ to a constant in the limit~$T \to \infty$:
        \begin{align*}
            \lim_{T \to \infty} \mathcal H_T(q_0)
            &= \expect \Bigl[\lvert Y^{\infty}_1 \rvert^2 \lvert Y^{\infty}_2 \rvert^2 \Bigr]
            =  \expect\Bigl[\lvert Y^{\infty}_1 \rvert^2 \Bigr] \expect \Bigl[\lvert Y^{\infty}_2 \rvert^2 \Bigr]
            + 2 \Bigl\lvert \expect \bigl[Y^{\infty}_1  Y^{\infty}_2\bigr] \Bigr\rvert^2 \\
            &=   \norm{\sigma \nabla \mathcal{R}}^2 \norm{u}^2
            + 2 \myip{\sigma \nabla \mathcal{R}, u}^2.
        \end{align*}
        Applying the dominated convergence theorem,
        we deduce that
        \[
            \lim_{T \to \infty} \frac{A_2(T)}{T^2} = \expect_{\mu}\!\left[ S(q_0)^2 \lim_{T \to \infty} \mathcal H_T(q_0) \right]
            = \norm{S}^2 \left(\norm{\sigma^\top\nabla \mathcal{R}}^2 \norm{u}^2 + 2\myip{\sigma^\top\nabla \mathcal{R}, u}^2 \right).
        \]
        The dominated convergence theorem can indeed be applied since~$\mathcal{H}_T(q_0)$ is uniformly bounded. 
        Indeed,
        using~\eqref{eq:It\^o_formula}, then the elementary inequality $(a + b)^4 \leq 8a^4 + 8b^4$
        and finally the Burkholder--Davis--Gundy inequality,
        we have
        \begin{align*}
            \expect^{q_0}\!\left[ \left\lvert \frac{I_T}{\sqrt{T}} \right\rvert^2 \, \left\lvert \frac{X_T}{\sqrt{T}} \right\rvert^2\right]
            &\leq \frac{1}{2T^2} \left( \expect^{q_0}\!\left[ \left\lvert I_T \right\rvert^4 + \left\lvert X_T \right\rvert^4 \right] \right) \\
            &\leq \frac{1}{2T^2} \left( \expect^{q_0}\!\left[ 8 \left\lvert M_T \right\rvert^4 + \lvert X_T \rvert^4 \right] + 128 \|\mathcal{R}\|_{L^\infty}^4 \right) \\
            &\leq \frac{1}{2T^2} \left(\cbdg{4} \expect^{q_0}\!\left[ 8\langle M \rangle_T^2 + \langle X \rangle_T^2  \right] + 128 \|\mathcal{R}\|_{L^\infty}^4 \right) \\
            &\leq \frac{1}{2T^2}\left(8\cbdg{4} \|\sigma^\top \nabla\mathcal{R}\|^4_{L^\infty} T^2 + \cbdg{4} \|u\|^4_{L^\infty} T^2 + 128 \|\mathcal{R}\|_{L^\infty}^4 \right) \\
            &\leq \frac{1}{2}\left(8 \cbdg{4} \|\sigma^\top \nabla\mathcal{R}\|^4_{L^\infty} +  \cbdg{4} \|u\|^4_{L^\infty} + \frac{128 \|\mathcal{R}\|_{L^\infty}^4}{T^2}\right).
        \end{align*}
        \paragraph{Bounding the third term}
        Let us introduce the solution $\psi \in L^1_0(\mu)$ to the Poisson equation~$- \mathcal L \psi = \Pi\lvert u \rvert^2$,
        where~$\psi \in\mathcal{C}^\infty(\torus^d)$ by elliptic regularity.
        By It\^o's formula, it holds that
        \[
            \int_{0}^T \Pi\lvert  u(q_t) \rvert^2 \, \d t
            = \psi(q_0) - \psi(q_T) + \int_{0}^T \nabla \psi(q_t)^\top \sigma(q_t) \, \d W_t,
        \]
        and so
        \begin{align*}
            A_3(T)
            &= \expect\!\left[ S(q_0)^2 \lvert I_T \rvert^2  \int_{0}^{T} \nabla \psi(q_t)^\top \sigma(q_t) \, \d W_t \right]
            + \expect\Bigl[ S(q_0)^2 \lvert I_T \rvert^2  \bigl(\psi(q_0) - \psi(q_T)\bigr) \Bigr] \\
            &= C_1(T) + C_2(T).
        \end{align*}
        In view of~\cref{proposition:asymptotic_variance_gk} and the boundedness  of~$\psi$,
        it holds for the second term that $C_2(T) / T^2 \to 0$ in the limit~$T \to \infty$.
        For the first term, using again~\eqref{eq:It\^o_formula}, Young's inequality,
        and the Burkholder--Davis--Gundy inequality,
        we deduce that~$C_1(T)$ is of order~$T^{3/2}$ as~$T \rightarrow + \infty$,
        and therefore
        \[
            \lim_{T \to \infty} \frac{A_3(T)}{T^2} = 0,
        \]
        which concludes the proof.
    \end{proof}
The final lemma gives a bound on the third derivative on the right-hand side of~\eqref{eq:taylor_expansion}.
The goal is to show that this term is negligible when divided by~$T^3$.
\begin{lemma}
    \label{lemme:deriv3}
    Under~\cref{assump:fc_nonzero}, 
    for any $a >0$,
    \begin{align*}
        \lim_{T \to \infty} \frac{1}{T^3}\left(\underset{\xi \in [-\frac{a}{T},\frac{a}{T}]}\sup \left\lvert F^{(3)}_T\left(\xi\right)\right\rvert\right) = 0.
    \end{align*}
\end{lemma}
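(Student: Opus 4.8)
The plan is to show that $\displaystyle\sup_{\xi \in [-a/T, a/T]}\bigl\lvert F^{(3)}_T(\xi)\bigr\rvert \le C\,T^{5/2}$ for a constant $C$ independent of $T \ge 1$, which immediately gives the claim upon dividing by $T^3$. Starting from the expression~\eqref{eq:third_derivative} for $F_T^{(3)}(\alpha)$, I would bound each of the two expectations there by Hölder's inequality, splitting the integrand into the three factors $\Phi_T(q)$, a polynomial factor in $X_T + \xi\langle X\rangle_T$ and $\langle X\rangle_T$, and the exponential weight $\exp\bigl(\xi X_T + \tfrac{\xi^2}{2}\langle X\rangle_T\bigr)$. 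The argument then rests on four $L^p$-estimates, uniform in $\xi \in [-a/T, a/T]$ and $T\ge 1$: (i) the pointwise bound $\langle X\rangle_T \le \norm{u}_{L^\infty}^2\, T$; (ii) $\norm{X_T}_{L^p} \le (\cbdg{p})^{1/p}\norm{u}_{L^\infty}\sqrt{T}$, a direct consequence of the Burkholder--Davis--Gundy inequality~\eqref{eq:bdg}; (iii) $\norm{\Phi_T(q)}_{L^p} \le C_p\, T$ for every $p\ge 1$; and (iv) $\bigl\|\exp\bigl(\xi X_T + \tfrac{\xi^2}{2}\langle X\rangle_T\bigr)\bigr\|_{L^p} \le C_{p,a}$.

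Bound~(iii) is the one requiring care: the naive estimate $\Phi_T(q)\le (\norm{R}_{L^\infty}\norm{S}_{L^\infty}T)^2$ only gives order $T^2$, which turns out not to be enough. Instead, as in the proofs of~\cref{lemmaN,lemmaD}, I would apply Itô's formula to the solution $\mathcal R\in L^2_0(\mu)$ of $-\mathcal L\mathcal R = R$ (well posed and smooth under~\cref{assumption:inv_decay}) to write $\int_0^T R(q_t)\,\d t = \mathcal R(q_0) - \mathcal R(q_T) + M_T$ with $M_T = \int_0^T \nabla\mathcal R(q_t)^\top\sigma(q_t)\,\d W_t$, whence $\Phi_T(q)\le 2\norm{S}_{L^\infty}^2\bigl(4\norm{\mathcal R}_{L^\infty}^2 + \lvert M_T\rvert^2\bigr)$ and $\expect\bigl[\Phi_T(q)^p\bigr] \lesssim 1 + \expect\bigl[\lvert M_T\rvert^{2p}\bigr] \lesssim 1 + \norm{\sigma^\top\nabla\mathcal R}_{L^\infty}^{2p}\,T^p$ by~\eqref{eq:bdg}. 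For~(iv), I would factor $\exp\bigl(p\xi X_T + \tfrac{p\xi^2}{2}\langle X\rangle_T\bigr) = \exp\bigl(p\xi X_T - \tfrac{(p\xi)^2}{2}\langle X\rangle_T\bigr)\exp\bigl(\tfrac{p(p+1)\xi^2}{2}\langle X\rangle_T\bigr)$; the first factor is the Doléans-Dade exponential of $p\xi X$ and has expectation $1$ (Novikov's condition holds since $u$ is bounded, as in the proof of~\cref{prop:differentiation}), while the second is bounded by $\exp\bigl(\tfrac{p(p+1)\xi^2}{2}\norm{u}_{L^\infty}^2 T\bigr) \le \exp\bigl(\tfrac{p(p+1)a^2}{2}\norm{u}_{L^\infty}^2\bigr)$ for $T \ge 1$. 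It is precisely here that the scaling $\lvert\xi\rvert\le a/T$ enters in an essential way: were $\xi$ merely bounded, this factor would grow like $\exp(cT)$ and the estimate would collapse.

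With these bounds in hand the conclusion is bookkeeping. For the first expectation in~\eqref{eq:third_derivative}, Hölder's inequality with exponents $(3,3,3)$ gives the bound $\norm{\Phi_T(q)}_{L^3}\,\norm{X_T + \xi\langle X\rangle_T}_{L^9}^3\,\bigl\|\exp(\cdots)\bigr\|_{L^3} \lesssim T\cdot T^{3/2}\cdot 1$, using~(ii) together with $\lvert\xi\rvert\norm{\langle X\rangle_T}_{L^9}\le a\norm{u}_{L^\infty}^2 = O(1)$ from~(i); for the second expectation, one pulls out $\langle X\rangle_T \le \norm{u}_{L^\infty}^2 T$ by~(i) and applies Hölder with exponents $(3,3,3)$ again to get $\lesssim T\cdot T\cdot\sqrt{T}\cdot 1$. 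In both cases the bound is $O(T^{5/2})$, uniformly over $\xi\in[-a/T, a/T]$, and dividing by $T^3$ yields a quantity of order $T^{-1/2}$ tending to $0$. The only genuine obstacle is recognising the two places where the crude bounds lose a power of $T$ — the typical size $O(\sqrt T)$, rather than $O(T)$, of $\int_0^T R(q_t)\,\d t$ once the boundary terms are removed via the Poisson equation, and the size $O(1)$, rather than $O(\exp(cT))$, of the exponential weight thanks to the $1/T$ scaling of the interval — and tightening both accordingly.
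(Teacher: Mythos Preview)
Your proposal is correct and follows essentially the same route as the paper: both split~\eqref{eq:third_derivative} via H\"older with exponents $(3,3,3)$, sharpen the naive bound on $\Phi_T$ to order $T$ using It\^o's formula with the Poisson solution $\mathcal R$, control moments of $X_T$ by Burkholder--Davis--Gundy, and bound the exponential factor uniformly via the Dol\'eans-Dade martingale trick combined with the $\lvert\xi\rvert \le a/T$ scaling, arriving at an $O(T^{5/2})$ bound. Your write-up is in fact slightly more systematic than the paper's, which handles $\Psi_T$ in detail and then simply asserts that ``similar estimates'' dispatch $\Theta_T$.
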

\begin{proof}
    Recall the expression~\eqref{eq:third_derivative} for~$F^{(3)}_T(\alpha)$:
    \begin{align*}
        F^{(3)}_T(\alpha) &= \expect\!\left[\Phi_T(q)\,\bigl(X_T + \alpha\langle X \rangle_T\bigr)^3E_T(\alpha)\right]+ 3\,\expect\!\left[\Phi_T(q)\,\langle X \rangle_T\left(X_T +\alpha\langle X \rangle_T\right) E_T(\alpha)\right],
    \end{align*}
    with the notation
    \begin{align*}
        E_T(\alpha) = \exp\left( \alpha X_T + \frac{\alpha^2}{2} \langle X \rangle_T \right).
    \end{align*}
    Note first that
    \begin{align*}
        \left\lvert F^{(3)}_T(\alpha)\right\rvert & \leq  \Psi_T(\alpha) + 3\Theta_T(\alpha),
    \end{align*}
    where
    \begin{align*}
        &\Psi_T(\alpha) := \expect\!\left[\Phi_T(q)\,\bigl(|X_T| + \alpha\langle X \rangle_T\bigr)^3 E_T(\alpha)\right],\\ 
        &\Theta_T(\alpha) :=\expect\!\Bigl[\Phi_T(q)\,\langle X \rangle_T\Bigl(|X_T| + \alpha\langle X \rangle_T\Bigr) E_T(\alpha)\Bigr].
    \end{align*}
    By Hölder's inequality,
    \begin{align*}
        \Psi_T(\alpha) \le \left(\expect\!\left[\Phi_T(q)^3\,\right] \expect\!\left[\left(|X_T| + \alpha\langle X \rangle_T\right)^9\right] \expect\!\left[E_T(\alpha)^3\right]\right)^{\frac{1}{3}}.
    \end{align*}
    We next bound each of the three terms in the product on the right-hand side.
    \paragraph{Bounding the first term of the product}
    We recall the definition of~$\Phi_T(q)$ in~\eqref{eq:definition_phi} and introduce the solution $\mathcal R \in L^2_0(\mu)$ of the Poisson equation $-\mathcal{L}\mathcal{R} = R$.
    By using It\^o's formula we obtain
    \begin{align*}
        \expect\!\left[\Phi_T(q)^3\right]
        & = \expect\!\left[\left(\int_0^T R(q_t)S(q_0)\, \d t\right) ^6\right]
        \le \|S\|^6_{L^\infty}\expect\!\left[\left(\int_0^T \mathcal{L}\mathcal{R}(q_t)\, \d t\right) ^6\right]
     \\ & \le \|S\|^6_{L^\infty}\expect\!\left[\left(\mathcal{R}(q_T)- \mathcal{R}(q_0) - \int_0^T \nabla\mathcal{R}(q_t)\cdot \d W_t\right) ^6\right].
    \end{align*}
    We now apply the elementary inequality $|x+y|^p \le 2^{p-1}\Bigl(|x|^p + |y|^p\Bigr)$ for $p \ge 1$ to write
    \begin{align*}
        \expect\!\left[\Phi_T(q)^3\right]
        & \le 2^{5}\|S\|^6_{L^\infty}\expect\!\left[\Bigl(\mathcal{R}(q_T)- \mathcal{R}(q_0)\Bigr)^6 + \left(\int_0^T \nabla\mathcal{R}(q_t)\cdot \d W_t\right) ^6\right].
    \end{align*}
    Since $\mathcal R$ is bounded on the torus,
    and using the Burkholder--David--Gundy inequality~\eqref{eq:bdg},
    it follows that
    \begin{align*}
        \expect\!\left[\Phi_T(q)^3\,\right] & \le 2^{5}\|S\|^2_{L^\infty}\left(2^{6}\|\mathcal{R}\|^6_{L^\infty}+\cbdg{6}\expect_\mu\!\left[ \left(\int_0^T |\nabla\mathcal{R}(q_t)|^2\, \d t\right) ^3\right]\right) \\
                                            &\leq 2^{5}\|S\|^2_{L^\infty}\left(2^{6}\|\mathcal{R}\|^6_{L^\infty}+\cbdg{6} \|\nabla\mathcal{R}\|^6_{L^\infty}T^3 \right).
    \end{align*}
    \paragraph{Bounding the second term of the product} With manipulations similar to the ones used for the first term, we obtain that
    \begin{align*}
        \expect\!\left[\left(|X_T| + \alpha\langle X \rangle_T\right)^9\right] & \leq 2^8\left(\expect\!\left[|X_T|^9\right] +\alpha^9\expect\!\left[\langle X \rangle_T^9\right]\right) \\
        &\leq 2^8\left(\cbdg{9}\expect_\mu\!\left[\langle X \rangle_T^{9/2}\right] +\alpha^9\expect\!\left[ \langle X \rangle_T^9\right]\right) \\
        & \leq 2^8 \left(\cbdg{9}+\alpha^9\|u\|_{L^\infty}^9 T^{9/2}\right)\|u\|_{L^\infty}^9 T^{9/2}.
    \end{align*}
   Finally,
   \begin{align*}
    \expect\!\left[\left(|X_T| + \alpha\langle X \rangle_T\right)^9\right]^{1/3} \leq 2^{8/3} \left(\cbdg{9}+\alpha^9\|u\|_{L^\infty}^9 T^{9/2}\right)^{1/3}\|u\|_{L^\infty}^3 T^{3/2}.
   \end{align*}
    \paragraph{Bounding the third term of the product}
    We can write
    \begin{align*}
        \expect\!\left[E_T(\alpha)^3\right] & = \expect\!\left[\exp\left(3\alpha X_T + \frac32\alpha^2\langle X\rangle_T \right)\right] \\
        &= \expect\!\left[\exp\left(3\alpha X_T - \frac{9}{2}\alpha^2\langle X\rangle_T \right)\exp\left(6\alpha^2\langle X\rangle_T\right)\right] \\
        & \leq \exp(6\alpha^2\|u\|^2_{L^\infty}T)\,\expect\!\left[\exp\left(3\alpha X_T - \frac{9}{2}\alpha^2\langle X\rangle_T \right)\right]
    \end{align*}
    The Novikov condition is satisfied,
    so the Doléans-Dade exponential has an expectation of $1$. 
    Therefore
    \begin{align*}
        \expect\!\left[E_T(3\alpha)\right]^{\frac{1}{3}} & \le \exp\Bigl(2\alpha^2\|u\|^2_{L^\infty}T\Bigr).
    \end{align*}
    Finally, by combining the three inequalities, we have
    \begin{align}
        \label{eq:ineq_deriv3}
        \notag
        \Psi_T(\alpha) &\le 2^{13/3}\|S\|^{2/3}_{L^\infty}\left(\frac{2^6\|\mathcal{R}\|^6_{L^\infty}}{T^3}+\cbdg{6} \|\nabla\mathcal{R}\|^6_{L^\infty} \right)^{1/3} \\
        & \quad \times \left(\cbdg{9}+\alpha^9\|u\|_{L^\infty}^9 T^{9/2}\right)^{1/3}\|u\|_{L^\infty}^3 T^{5/2}\exp(2\alpha^2\|X\|^2_{L^\infty}T).
    \end{align}
    Then by taking~$\alpha = \frac{a}{T}$ and applying this inequality for $\xi \in [0,\frac{a}{T}]$, by dividing by $T^3$ and bounding $\xi$ by~$\frac{a}{T}$ in \eqref{eq:ineq_deriv3}, we conclude that
    \begin{align*}
        \underset{\xi \in [-\frac{a}{T},\frac{a}{T}]}\sup\, \frac{1}{T^3}\Psi_T(\xi) &\leq \frac{2^{13/3}}{\sqrt{T}}\|S\|^{2/3}_{L^\infty}\left(\frac{2^6\|\mathcal{R}\|^6_{L^\infty}}{T^3}+\cbdg{6} \|\nabla\mathcal{R}\|^6_{L^\infty} \right)^{1/3} \\
        & \quad \times \left(\cbdg{9}+a^9\|u\|_{L^\infty}^9 T^{-9/2}\right)^{1/3}\|u\|_{L^\infty}^3 \exp(2a^2\|X\|^2_{L^\infty}T^{-1}) \\
        & \quad \xrightarrow[T \to \infty]{} 0.
    \end{align*}
    Similar estimates can be used to prove the convergence of~$\Theta_T$ and conclude to the claimed convergence result.
\end{proof}
\subsubsection{Asymptotic behavior of the optimal value}
\label{sec:limit-communation}
We now come back to~\eqref{eq:taylor_expansion}.
By using \cref{lemmaN,lemmaD,lemme:deriv3},
we obtain that for all fixed~$a \in \real$
\begin{align}
    \label{eq:lim_F_centered}
    \lim_{T \to \infty} F_T\left(\frac{a}{T}\right) - F_T(0) = D_1 a + \frac{D_2}{2} a^2,
\end{align}
where $D_1$ and $D_2$ are the constants introduced in from \cref{lemmaN,lemmaD}.
The next step is to understand how the optimal value $\alpha^*_T$ behaves.
Equation \eqref{eq:lim_F_centered} suggests that $\alpha^*_T$ scales as $\frac{1}{T}$,
which we justify rigorously based on the following result.
\begin{proposition}
    \label{prop:limits_exchange}
    Under~\cref{assumption:inv_decay,assump:fc_nonzero,assump:mes_inv_ci},
    \begin{equation}
        \label{eq:convergence_argmin}
        \lim_{T \to \infty} \argmin_{a \in \real}\left\{F_T\left(\frac{a}{T}\right) - F_T(0)\right\} = \argmin_{a \in \real} \left\{D_1 a + \frac{D_2}{2} a^2\right\}.
    \end{equation}
    Furthermore, 
    as a consequence of~\eqref{eq:convergence_argmin}
    \begin{equation}
        \label{eq:conv_min}
        \lim_{T \to \infty} \left(\min_{\alpha \in \real} F_T\left(\alpha\right) - F_T(0)\right)  = - \frac{D_1^2}{2D_2}.
    \end{equation}
\end{proposition}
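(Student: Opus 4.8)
The plan is to pass to the rescaled functions $G_T(a) := F_T(a/T) - F_T(0)$ and $G_\infty(a) := D_1 a + \tfrac{D_2}{2} a^2$, to upgrade the pointwise limit~\eqref{eq:lim_F_centered} into \emph{uniform convergence on compact sets}, and then to deduce convergence of minimizers and of minimal values from the convexity of the $G_T$. For the uniform convergence, fix $R > 0$; for $\lvert a \rvert \le R$ the intermediate point $\xi_{a,T}$ in the Taylor expansion~\eqref{eq:taylor_expansion} lies in $[-R/T, R/T]$, so by \cref{lemme:deriv3} (used with the parameter $R$) the cubic remainder satisfies $\bigl\lvert \tfrac{a^3}{6T^3} F_T^{(3)}(\xi_{a,T}) \bigr\rvert \le \tfrac{R^3}{6}\, T^{-3} \sup_{\lvert\xi\rvert \le R/T} \lvert F_T^{(3)}(\xi)\rvert \to 0$, uniformly over $\lvert a \rvert \le R$; and the two remaining terms of~\eqref{eq:taylor_expansion} converge uniformly on $\{\lvert a \rvert \le R\}$ because $F_T'(0)/T \to D_1$ and $F_T''(0)/T^2 \to D_2$ by \cref{lemmaN,lemmaD}. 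Altogether $\sup_{\lvert a \rvert \le R} \lvert G_T(a) - G_\infty(a)\rvert \to 0$ for every $R > 0$.

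Next I would collect the structural facts. The limit $G_\infty$ is strictly convex and coercive because $D_2 > 0$: indeed $\norm{S} > 0$ and $\norm{u} > 0$ by \cref{assump:fc_nonzero}, and $\norm{\sigma^\top \nabla \mathcal{R}}^2 = \myip{\nabla\mathcal{R}, \sigma\sigma^\top \nabla\mathcal{R}} > 0$ since $\sigma\sigma^\top$ is uniformly positive definite and $\mathcal{R}$ is non-constant (otherwise $R = -\mathcal{L}\mathcal{R} = 0$); hence $G_\infty$ has the unique minimizer $a^\star = -D_1/D_2$, with $G_\infty(a^\star) = -D_1^2/(2D_2)$. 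On the other hand, each $G_T$ is strictly convex, coercive and smooth, being $F_T$ composed with an affine bijection (see \cref{prop:differentiation}); it therefore has a unique minimizer, which is precisely $a_T^\star := T\alpha_T^\star$, and one has $G_T(0) = 0$.

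The delicate point is to show that the family $(a_T^\star)$ is bounded, which is exactly where convexity is needed — pointwise convergence alone could not prevent minimizers from escaping to infinity. I would pick $R > \lvert a^\star \rvert$ large enough that $G_\infty(\pm R) \ge 1$ (possible by coercivity of $G_\infty$); then $G_T(\pm R) \to G_\infty(\pm R) \ge 1$, so $G_T(\pm R) > 0$ for all $T$ large. Since a convex function is nonincreasing on $(-\infty, a_T^\star]$ and nondecreasing on $[a_T^\star, +\infty)$, and $G_T(0) = 0 < G_T(\pm R)$, the minimizer $a_T^\star$ must lie in $(-R, R)$. Given any sequence $T_n \to \infty$, the eventually-bounded family $(a_{T_n}^\star)$ has a subsequence converging to some $\bar a \in [-R, R]$; letting $n \to \infty$ in $G_{T_n}(a_{T_n}^\star) \le G_{T_n}(a)$ and using the uniform convergence on $[-R,R]$ together with the continuity of $G_\infty$ gives $G_\infty(\bar a) \le G_\infty(a)$ for all $a$, whence $\bar a = a^\star$ by uniqueness. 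As every subsequence admits a further subsequence with the same limit, $a_T^\star \to a^\star = -D_1/D_2$, which is~\eqref{eq:convergence_argmin}. Finally $\min_{\alpha \in \real} F_T(\alpha) - F_T(0) = G_T(a_T^\star)$ and $\lvert G_T(a_T^\star) - G_\infty(a^\star)\rvert \le \sup_{\lvert b\rvert \le R}\lvert G_T(b) - G_\infty(b)\rvert + \lvert G_\infty(a_T^\star) - G_\infty(a^\star)\rvert \to 0$, which yields~\eqref{eq:conv_min}. The main obstacle is thus the boundedness of the minimizers and the attendant careful bookkeeping with convexity; all the genuine analytic effort has already been carried out in \cref{lemmaN,lemmaD,lemme:deriv3}.
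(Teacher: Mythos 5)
Your proof is correct, and its overall architecture coincides with the paper's: establish uniform convergence of $G_T$ to $G_\infty$ on compact sets, show the minimizers stay in a fixed compact set using convexity, and conclude by a standard subsequence argument. The one genuine difference is how uniform convergence on compacts is obtained. The paper first records only the pointwise limit~\eqref{eq:lim_F_centered} and then invokes Rockafellar's theorem that pointwise convergence of finite convex functions is automatically uniform on compacta; you instead extract uniformity directly from the Taylor expansion~\eqref{eq:taylor_expansion}, observing that for $\lvert a\rvert \le R$ the intermediate point lies in $[-R/T,R/T]$ so that the supremum bound of \cref{lemme:deriv3} controls the remainder uniformly, while the linear and quadratic terms converge uniformly on $\{\lvert a\rvert\le R\}$ by \cref{lemmaN,lemmaD}. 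This is more self-contained (it avoids the external convex-analysis result and uses the full strength of \cref{lemme:deriv3}, whose supremum formulation exists precisely for this purpose), at the cost of being tied to the $C^3$ structure; the paper's route is softer and would survive with only pointwise information. Your compactness argument (comparing $G_T(0)=0$ with $G_T(\pm R)>0$ and using monotonicity of a convex function on either side of its minimizer) is the same device as the paper's \cref{lem:min_comp}, merely anchored at $0$ and $\pm R$ rather than at $a_\infty$ and $a_\infty\pm\delta$. You also explicitly verify $D_2>0$ (via $\norm{S},\norm{u}>0$ and $\sigma\sigma^\top$ uniformly positive definite together with $\mathcal R$ non-constant), a point the paper leaves implicit but which is needed for the limit to have a unique minimizer; this is a welcome addition.
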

\begin{proof}
    Fix a sequence $(t_n)_{n \in \nat}$ such that $t_n \to \infty$,
    and consider the sequence of functions $\left(f_n\right)_{n \in \mathbb N}$ defined as
    \begin{align}
        \label{eq:suite_fn}
        f_n(a) = F_{t_n}\left(\frac{a}{t_n}\right) - F_{t_n}(0).
    \end{align}
    This is a sequence of strictly convex coercive functions converging pointwise to $f_{\infty}(a) = D_1 a + D_2 a^2 / 2$ on $\real$ by~\eqref{eq:lim_F_centered}.
    Therefore, by~\cite[Theorem 10.8]{rockafellar2015convex},
    this sequence converges uniformly to~$f_{\infty}$ on every compact subset of~$\real$. 
    It is then straightforward to show 
    that~$\left(\argmin_{a \in \real} f_n(a)\right)_{n\in\mathbb{N}}$ is contained in a compact set, which allows to prove~\eqref{eq:convergence_argmin}. 
    We give a proof of this result in~\cref{lem:min_comp}.
    As a consequence,~\eqref{eq:conv_min} directly follows from~\eqref{eq:convergence_argmin}.
\end{proof}
As a direct corollary of \cref{prop:limits_exchange},
we have that
\begin{align}
    \label{eq:equi_alpha_opt}
    \alpha^*_T := \argmin_{\alpha \in \real}\,F_T\left(\alpha\right) \underset{T \to \infty}\sim -\frac{D_1}{D_2 T}.
\end{align}
This is the main result of this paper.
The latter equation gives us the scaling with respect to~$T$ of the parameter~$\alpha$ that minimizes the variance of the Girsanov estimator~\eqref{eq:many_replica}.
Equation~\eqref{eq:conv_min} in~\cref{prop:limits_exchange},
together with~\cref{proposition:asymptotic_variance_gk},
also imply that
\begin{align}
    \label{eq:gain_var_rel}
    \frac{F_T(0) - F_T(\alpha^*_T)}{F_T(0)}
    \underset{T \to \infty} \sim \frac{D_1^2}{4 D_2 \|S\|^2\left\langle R, -\mathcal{L}^{-1}R \right\rangle} \frac{1}{T}.
\end{align}
This last result shows that the variance can only be reduced by a factor that scales asymptotically as~$T^{-1}$.
\section{Numerical results}
\label{sec:numerical-results}
We illustrate in this section the theoretical predictions obtained in~\cref{sec:replin-and-GK,sec:variance-reduction}.
In~\cref{sec:discrete_girsanov},
we present the numerical method used to perform the numerical computations,
and motivate the discretization formula for the Girsanov weights.
Then,
in~\cref{sec:numerical_experiments},
we consider toy problems in dimensions 1 and 2.
Throughout this section, we consider for simplicity only the case where~$\sigma$ is a constant positive scalar.
\subsection{Numerical method}
\label{sec:discrete_girsanov}
At the continuous-time level,
Girsanov's theorem ensures that
the following equality holds for any functional~$\mathcal F \colon C\bigl([0, T], \real^d\bigr) \to \real$ on path space:
\begin{align}
    \label{eq:girsanov-application}
    \expect \left[ \mathcal F\bigl(q_{[0, T]}\bigr) \right]
    = \expect \Bigl[\mathcal{E}(X^{\alpha})_T \mathcal F\Bigl(q^{\alpha}_{[0, T]}\Bigr) \Bigr].
\end{align}
In the context of this paper,
the functional of interest is the truncated Green--Kubo integral:
\[
    \mathcal F\bigl(q_{[0, T]}\bigr) = \int_{0}^{T} R(q_t) S(q_0) \, \d t.
\]
We consider Euler--Maruyama discretizations of
the reference dynamics $(q_t)_{t\geq0}$ given in~\eqref{eq:dyn_brow} and the biased dynamics~$(q^{\alpha}_t)_{t\geq0}$ given in~\eqref{eq:biased_dynamics}. 
We denote by~$\qapprox{n}$ and~$\qapproxa{n}$ the iterates of the numerical schemes \textit{i.e.} $\qapprox{n}$ and~$\qapproxa{n}$ are approximations of~$q_{n\Delta t}$ and~$q_{n\Delta t}^\alpha$:
\begin{subequations}
\begin{align}
    \label{eq:disc_ref}
    \qapprox{n+1} &= \qapprox{n} + b(\qapprox{n})\Delta t + \sigma\Delta W_n, \\
    \label{eq:disc_biased}
    \qapproxa{n+1}
            &= \qapproxa{n} + \Bigl(b(\qapproxa{n}) +\alpha \sigma u(\qapproxa{n})\Bigr) \Delta t + \sigma\Delta W_n 
            =: \qapproxa{n} + b^\alpha(\qapproxa{n})\Delta t + \sigma\Delta W_n,
\end{align}
\end{subequations}
where~$(\Delta W_n)_{n\in\mathbb{N}}$ are i.i.d Gaussian random variables with mean~$0$ and covariance~$\Delta t\, \mathrm{I}_d$.
In order to implement the weighted Green--Kubo estimator~\eqref{eq:many_replica},
it is also essential to discretize the Green--Kubo functional~$\mathcal F$ and the Girsanov weight.
A discretization of the functional, denoted by~$\widehat {\mathcal F} \colon \real^{d(N+1)} \to \real$,
is obtained for instance by an approximation of the time integral using the left point rule:
\[
    \widehat {\mathcal F} \bigl(\qapprox{0}, \dots, \qapprox{N}\bigr) =
    \sum_{n=0}^{N} R(\qapprox{n}) S(\qapprox{0}) \Delta t.
\]
Ideally, the discretization of the Girsanov weight should be consistent with the discretization of the dynamics in~\eqref{eq:disc_ref} and \eqref{eq:disc_biased},
in the sense that given finite discretized trajectories~$(\qapprox{0}, \dots, \qapprox{N})$ and~$(\qapproxa{0}, \dots, \qapproxa{N})$,
it should hold
\begin{equation}
    \label{eq:discrete_girsanov_formula}
    \expect \left[ \widehat {\mathcal F}\bigl(\qapprox{0}, \dots, \qapprox{N}\bigr) \right]
    =
    \expect \Bigl[ M^{\alpha}_N\Bigl(\qapproxa{0}, \dots, \qapproxa{N}\Bigr)
    \widehat{\mathcal F}\Bigl(\qapproxa{0}, \dots, \qapproxa{N}\Bigr) \Bigr],
\end{equation}
where $M^{\alpha}_N$ is the discretized Girsanov weight.
In the next paragraph, we motivate the discretization of this weight,
relying on the calculations provided in the appendix of~\cite{DHK}.
For the Euler--Maruyama discretization considered in~\eqref{eq:disc_ref} and~\eqref{eq:disc_biased},
the calculation of the likelihood ratio~$M^{\alpha}_N$ is relatively simple.
Let $\mu_{\Delta t,N}$ and $\mu^\alpha_{\Delta t,N}$ denote the Lebesgue densities 
of the random vectors~$(\qapprox{0}, \dots, \qapprox{N})$ and~$(\qapproxa{0}, \dots, \qapproxa{N})$,
respectively.
In order for~\eqref{eq:discrete_girsanov_formula} to hold,
the weight $M_N^\alpha$ must coincide with the quotient $\mu_{\Delta t,N} / \mu^\alpha_{\Delta t,N}$.
Now the transition kernel describing~\cref{eq:disc_ref} has a density
\begin{align}
    \label{eq:proba_chgmt_etat}
    \mathscr{P}(\qapprox{n+1} | \qapprox{n}) = \frac{1}{\sqrt{2\pi \Delta t \sigma^2}}\exp\!\left(-\frac{1}{2\Delta t\sigma^2}\left\lvert \qapprox{n+1}- \qapprox{n} - b(\qapprox{n})\Delta t \right\rvert^2 \right).
\end{align}
From this equation,
we deduce that
\begin{align*}
    \mu_{\Delta t, N}(\qapprox{0}, \dots, \qapprox{N}) &= \mathscr{P}(\qapprox{0}) \prod_{n=0}^{N-1} \proba(\qapprox{n+1} | \qapprox{n}) \\
 &= \frac{\mathscr{P}(\qapprox{0})}{(2\pi \Delta t \sigma^2)^{n/2}}\exp\left(-\frac{1}{2\Delta t\sigma^2}\sum_{n=0}^{N-1}\left\lvert \qapprox{n+1} - \qapprox{n} - b(\qapprox{n})\Delta t \right\rvert^2 \right).
\end{align*}
A similar expression holds for~$\mu^\alpha_{\Delta t,N}$,
with $b^{\alpha}$ instead of $b$.
Given~\cref{assump:mes_inv_ci},
it follows that
\begin{align*}
&M_N^\alpha(\qapproxa{0}, \dots, \qapproxa{N})  \\
&= \frac{\mu_{\Delta t,N}(\qapproxa{0}, \dots, \qapproxa{N})}{\mu^\alpha_{\Delta t,N}(\qapproxa{0}, \dots, \qapproxa{N})}
=\frac{\displaystyle \exp\left(-\frac{1}{2\Delta t\sigma^2}\sum_{n=0}^{N-1}\left\lvert \qapproxa{n+1}-\qapproxa{n}-b(\qapproxa{n})\Delta t \right\rvert^2 \right)}{\displaystyle \exp\left(-\frac{1}{2\Delta t \sigma^2}\sum_{n=0}^{N-1}\left\lvert \qapproxa{n+1}-\qapproxa{n}-b^\alpha(\qapproxa{n})\Delta t \right\rvert^2 \right)}
\\ & =  \exp\left( \sum_{n=0}^{N-1}\frac{(\qapproxa{n+1}-\qapproxa{n})\cdot\left(b(\qapproxa{n})-b^\alpha(\qapproxa{n})\right)}{\sigma^2}\right) \!\exp\left(-\sum_{n=0}^{N-1}\frac{\left(|b(\qapproxa{n})|^2-|b^\alpha(\qapproxa{n})|^2\right) \Delta t}{2\sigma^2}\right).
\end{align*}
Writing $\qapproxa{n+1} - \qapproxa{n} = b^\alpha(\qapproxa{n}) \Delta t + \sigma \Delta W_n$,
we obtain
\begin{align*}
    &\frac{(\qapproxa{n+1}-\qapproxa{n})\cdot \bigl(b(\qapproxa{n})- b^\alpha(\qapproxa{n})\bigr)}{\sigma^2} \\
    &\qquad\qquad\qquad\qquad= \frac{b(\qapproxa{n})\cdot b^\alpha(\qapproxa{n})\Delta t - |b^\alpha(\qapproxa{n})|^2\Delta t + \sigma \bigl(b(\qapproxa{n})-b^\alpha(\qapproxa{n})\bigr)\cdot \Delta W_n}{\sigma^2},
\end{align*}
and therefore
\[
    M_N^\alpha(\qapproxa{0}, \dots, \qapproxa{N}) = \exp\left( \sum_{n=0}^{N-1}\frac{\bigl(b(\qapproxa{n})-b^\alpha(\qapproxa{n})\bigr)\cdot\Delta W_n}{\sigma} -\sum_{n=0}^{N-1}\frac{\left\lvert b(\qapproxa{n})-b^\alpha(\qapproxa{n})\right\rvert^2 \Delta t}{2\sigma^2}\right).
\]
Finally, since $b^{\alpha} = b +\alpha \sigma u$,
we obtain that
\begin{align}
    \label{eq:discrete_girsanov}
    M_N(\qapproxa{0}, \dots, \qapproxa{N}) = \exp\left(-\alpha \sum_{n=0}^{N-1}u(\qapproxa{n})^\top\Delta W_n -\frac{\alpha^2}{2}\sum_{n=0}^{N-1} |u(\qapproxa{n})|^2 \Delta t\right).
\end{align}
The above expression gives us the discrete Girsanov weight needed to reweight trajectories.
Note that~\eqref{eq:discrete_girsanov} is the discrete counterpart of~\eqref{eq:expression_expo}.
Thus, we have shown that
\begin{align}
    \label{eq:discrete_GK}
\expect\!\left[\sum_{n=0}^{N}R(\qapprox{n})S(\qapprox{0})\Delta t\right] = \expect\!\left[M_N(\qapproxa{0}, \dots, \qapproxa{N})\sum_{n=0}^{N}R(\qapproxa{n})S(\qapproxa{0})\Delta t\right],
\end{align}
which is the discrete counterpart of~\eqref{girsanov-application}.
\subsection{Numerical results}
\label{sec:numerical_experiments} 
We illustrate here the theoretical results established in~\cref{sec:asymptotic}. 
First, in~\cref{ssub:num_1d}, we simulate a one-dimensional overdamped Langevin dynamics where the perturbation is a fraction of the energy function. Then, we simulate in~\cref{sec:2d_num} a two-dimensional overdamepd Langevin dynamics with the free energy as the perturbation.
\subsubsection{One dimensional overdamped Langevin}
\label{ssub:num_1d}
\paragraph{The model}
We first consider the one dimensional overdamped Langevin dynamics
\begin{align}
    \label{eq:ovd_D}
    \d q_t = -V'(q_t)\,\d t + \sqrt{\frac{2}{\beta}}\,\d W_t,\qquad q_t \in \torus,
\end{align}
with~$\beta = 3$ and
\begin{align}
    \label{eq:potentiel_numD}
    V(q) = \frac{1 + \cos(2\pi q)}{2}.
\end{align}
We take the perturbation to be $u = -V'$, 
so the biased dynamics is
\begin{align*}
    \d q_t^\alpha = -\!\left(1+\alpha\sqrt{\frac{2}{\beta}}\right)V'(q_t^\alpha)\,\d t + \sqrt{\frac{2}{\beta}}\,\d W_t,\qquad q_t^\alpha \in \torus.
\end{align*}
For the numerical experiments we consider the schemes~\eqref{eq:disc_ref} and~\eqref{eq:disc_biased} for a time step of~$\Delta t  = 10^{-4}$ and a final time~$T$ in the range $[0, 2]$. 
Expectations were estimated using~$J = 10^{6}$ realizations.
The observables are~$R=V'$ and~$S = \beta R$, 
which appear in the computation of mobility, see for instance~\cite[Theorem 1]{FHS}.
\paragraph{Asymptotic scalings}
We first illustrate in~\cref{fig:fig1D} the behavior of the derivatives~$F_T'(0)$ and $F_T''(0)$ with respect to~$T$.
The dependence is clearly linear for the former and quadratic for the latter,
in very good agreement with the theoretical results~\eqref{eq:estimate_num} and~\eqref{eq:equivalent_second_derivative}.
Note that the curves in~\cref{fig:fig1D} are plotted with a 95\% confidence interval, 
which is nevertheless small and therefore difficult to distinguish on the figures.
Futhermore, we compute~$D_1$ and~$D_2$ from~\eqref{eq:estimate_num} and~\eqref{eq:equivalent_second_derivative}.
To this end we solve various Poisson equations. 
In the one dimensional case, the unique solution~$\Phi \in L^2_0(\mu)$ to the Poisson equation~$-\mathcal L  \Phi = \varphi$, with~$\varphi \in L^2_0(\mu)$, can be analytically obtained by~$\Phi = \Pi \widetilde{\Phi}$ with
\begin{align}
    \label{eq:sol_poiss_analytical}
    \widetilde{\Phi}(x,y) = \int_{0}^{x} \ee^{\beta V(u)}\left(\kappa-\beta\int_{0}^{u}\ee^{-\beta V(s)}\varphi(s)\,\d s\right)\d u ,
\end{align}
where
\begin{subequations}
\begin{align}
    \kappa & = \frac{\displaystyle\beta\int_0^1 \int_{0}^{u}\ee^{-\beta V(s)}\varphi(s)\,\d s\,\d u}{\displaystyle\int_0^1 \ee^{\beta V(u)}\,\d u}\cdot
\end{align}
\end{subequations}
This expression of~$\kappa$ ensures that~$\Phi$ is periodic 
and~$\expect_\mu[\Phi]=0$. 
Using the expression~\eqref{eq:sol_poiss_analytical}, we obtain~$D_1 \simeq 11.6$ and~$D_2 \simeq  116.6$ while the fits in~\cref{fig:fig1D} gives~$D_1 \simeq  11.4$ and~$D_2 \simeq  117.9$ which respectively represents an error of~$1.7\%$ and~$1.1\%$. 
\begin{figure}[h!]
    \centering
    \begin{subfigure}{0.49\textwidth}
        \centering
        \includegraphics[width=\linewidth]{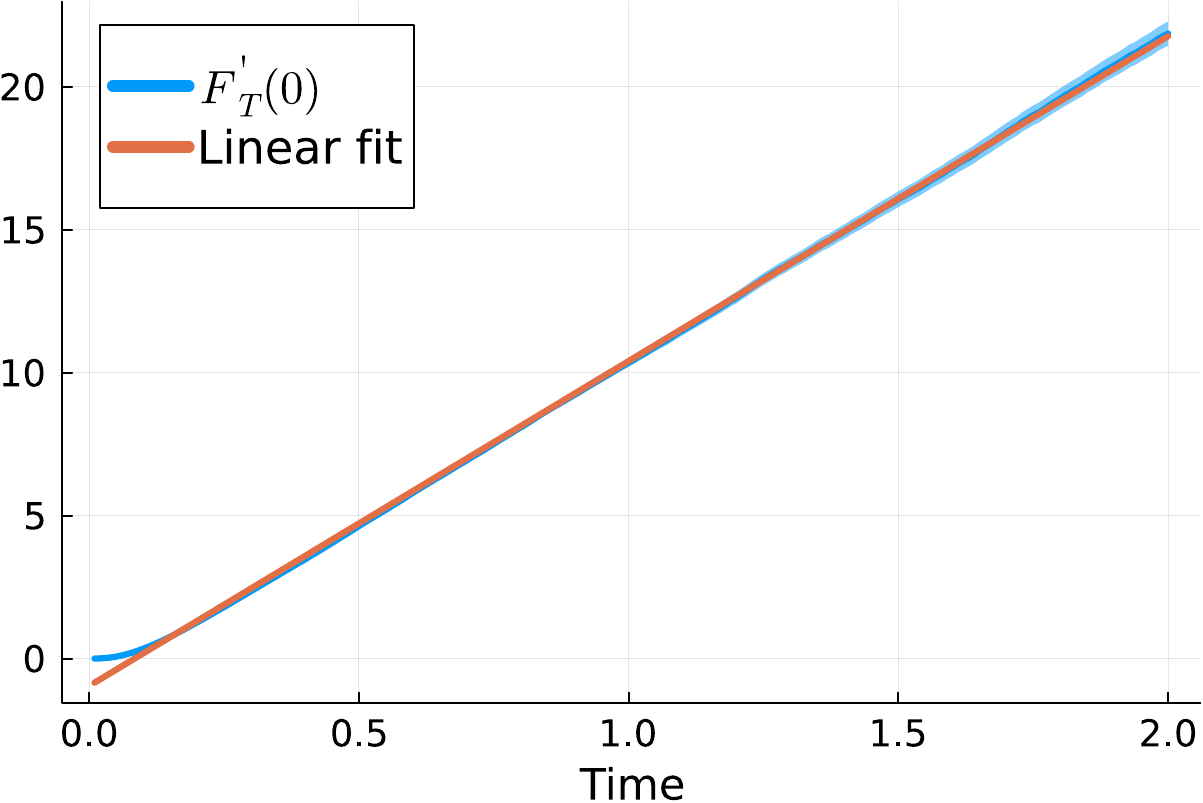}
        \caption{Time behavior of~$F_T'(0)$.}
        \label{fig:num1D}
    \end{subfigure}
    \hfill
    \begin{subfigure}{0.49\textwidth}
        \centering
        \includegraphics[width=\linewidth]{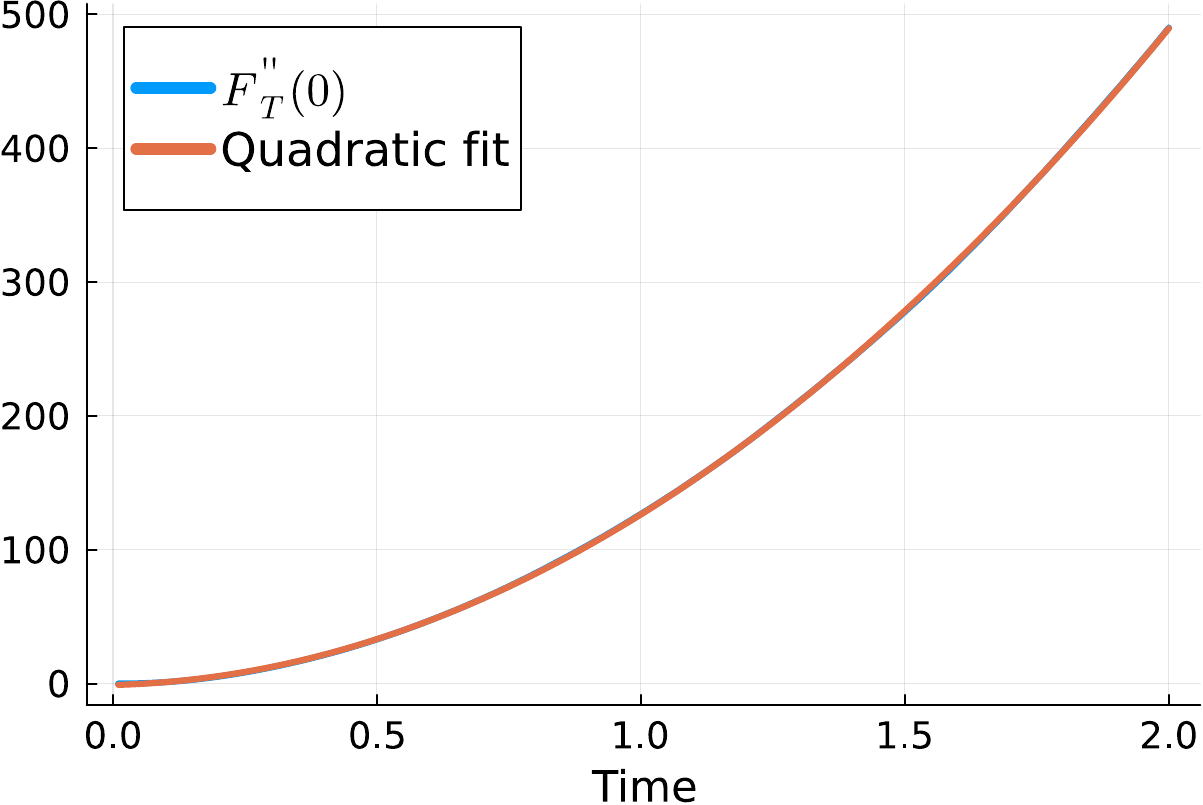}
        \caption{Time behavior of~$F_T''(0)$.}
        \label{fig:den1D}
    \end{subfigure}
    \caption{Time behavior of~$F_T'(0)$ and~$F_T''(0)$.}
    \label{fig:fig1D}
\end{figure}
\paragraph{Obtaining~$\alpha_T^*$}
In~\cref{fig:Obtaining_alpha}, 
we compare~$F_T(\alpha)$ with its quadratic approximation around~$0$ given by~\eqref{eq:taylor_expansion}. 
Furthermore, 
we numerically perform a quadratic fit of~$F_T$ around~$\widehat{\alpha}_T^*$.
The latter value is obtained with~\eqref{estimator-alpha} (which is the minimum of the analytic quadratic approximation~\eqref{eq:taylor_expansion}) to obtain a value of~$\alpha_T^*$.
\begin{figure}[h!]
    \includegraphics[width=0.85\textwidth]{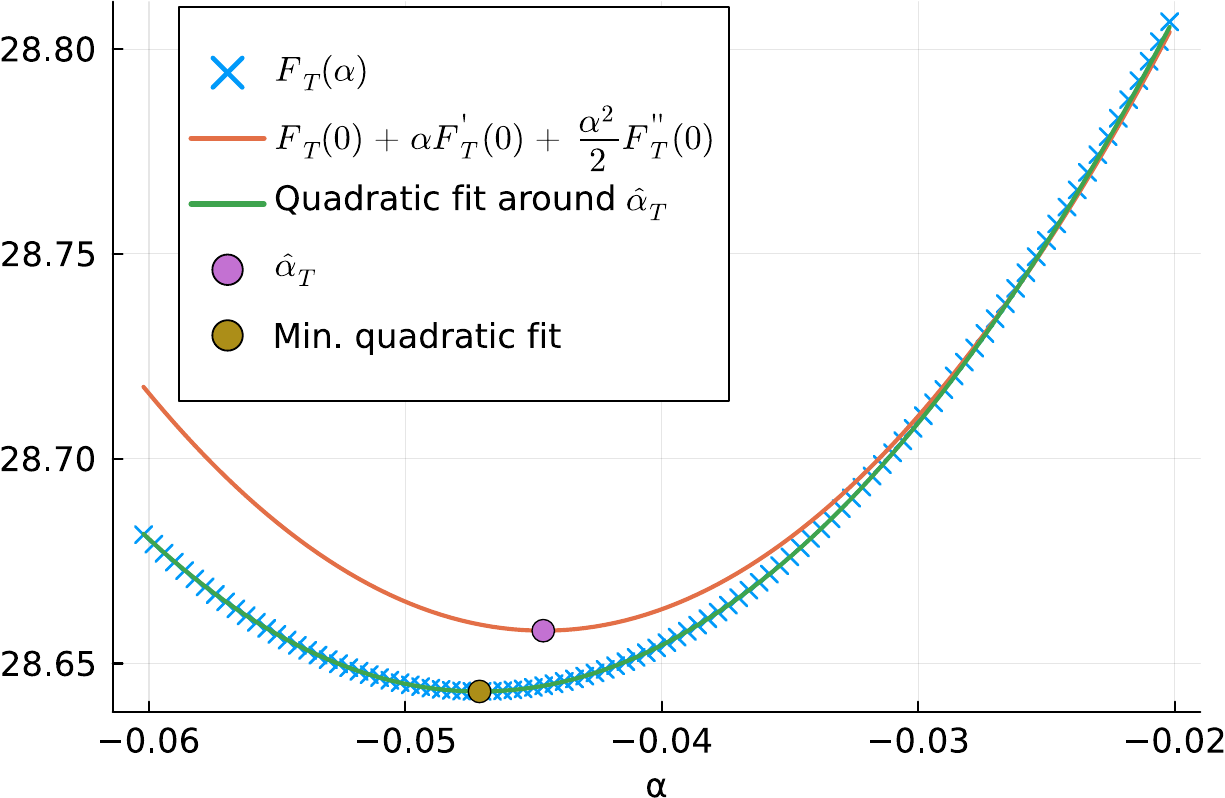}
    \caption{Comparison between~$F_T(\alpha)$,~\eqref{eq:taylor_expansion} and a quadratic fit around~$\widehat{\alpha}_T^*$ at~$T=2.0$.}
    \label{fig:Obtaining_alpha}
\end{figure}
\paragraph{Comparison of~$\alpha_T^*$ and~$\widehat\alpha_T^*$}
We compare the value of~$\widehat\alpha_T^*$ with~$\alpha_T^*$ obtained as described above. 
In~\cref{fig:true_scaling_alpha_1D}, 
the left panel~\cref{fig:comp_alpha} shows that these two values behave similarly with~$T$.
In~\cref{fig:scaling_log_alpha_1D} we show the asymptotic behavior in a~$\log$-$\log$ scale for~$T = 1.5$ to~$T = 2$ which is in agreement with the expected~$T^{-1}$ scaling~\eqref{eq:equi_alpha_opt}. 
In the following we only consider~$\widehat\alpha_T^*$ since~$\alpha_T^*$ is much more computationally expensive for only a small improvement in terms of variance reduction.
\begin{figure}[h!]
    \centering
    \begin{subfigure}{0.49\textwidth}
        \centering
        \includegraphics[width=\linewidth]{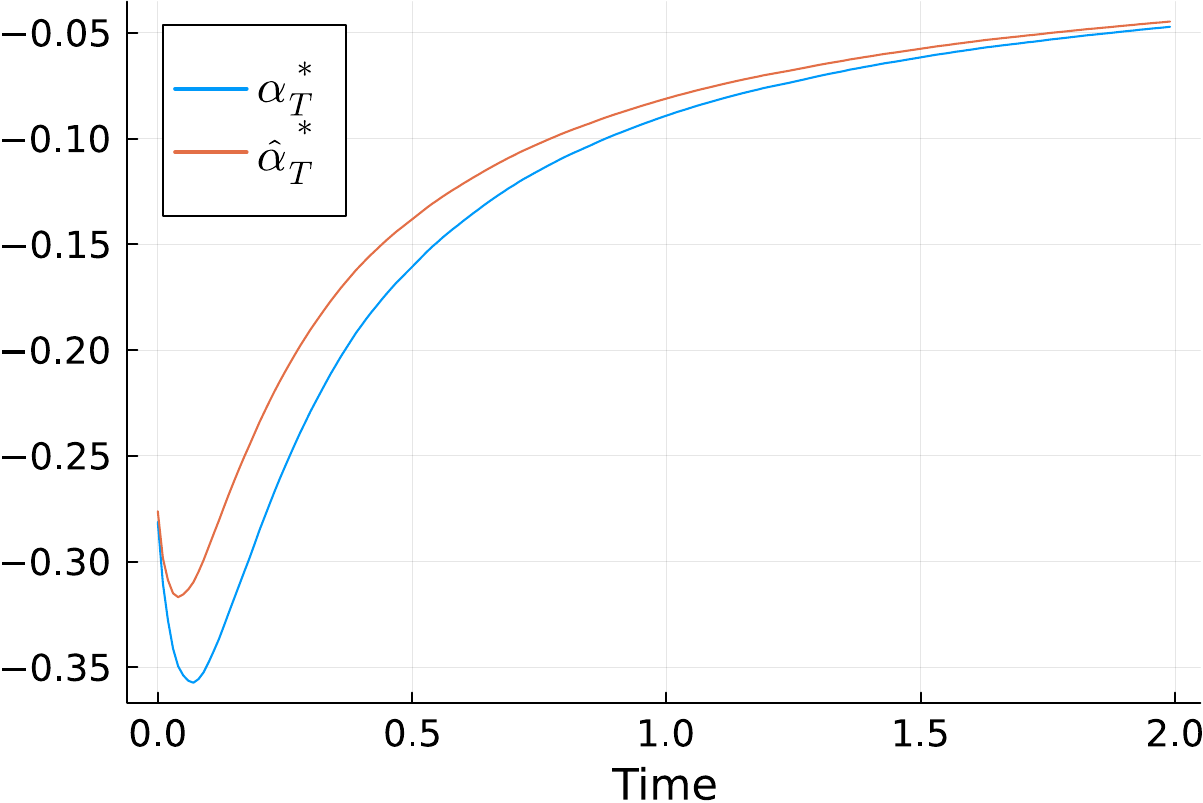}
        \caption{Comparison of~$\alpha_T^*$, obtained by quadratic interpolation, and~$\widehat\alpha_T^*$.}
        \label{fig:comp_alpha}
    \end{subfigure}
    \hfill
    \begin{subfigure}{0.49\textwidth}
        \centering
        \includegraphics[width=\linewidth]{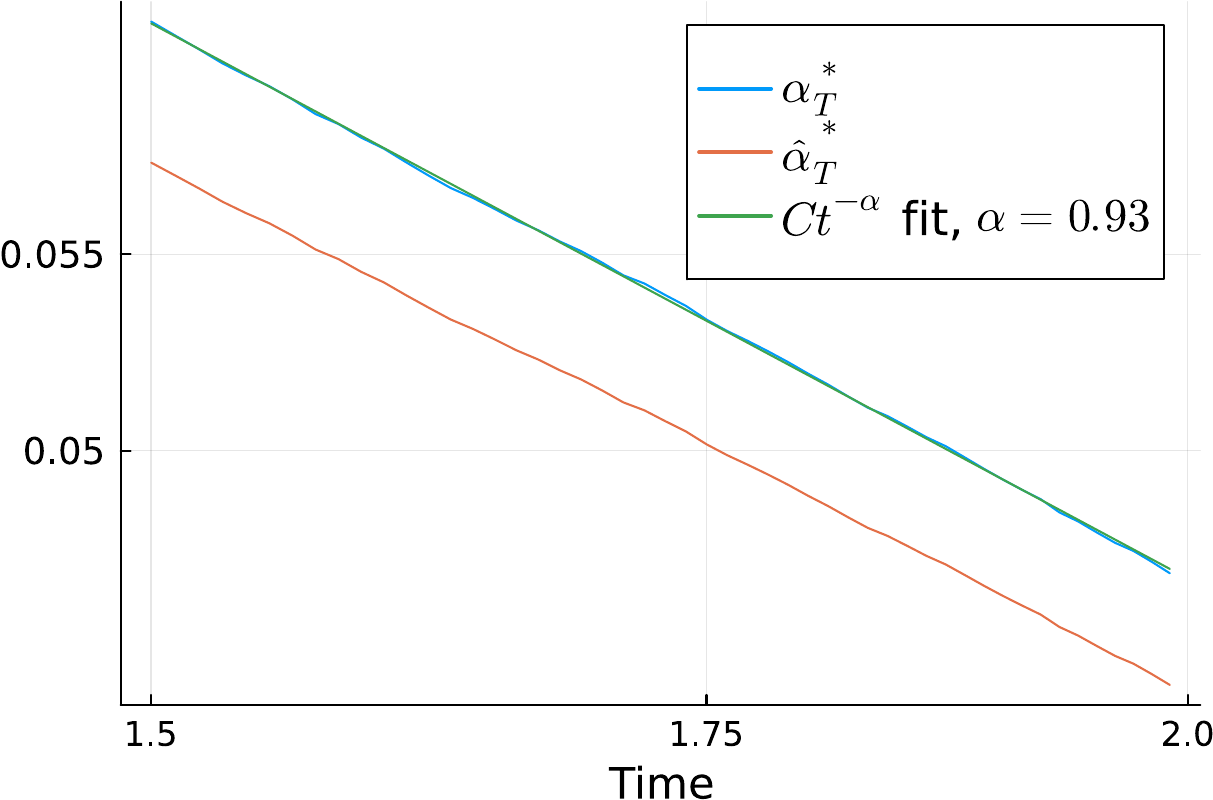}
        \caption{Scaling~$\log$-$\log$ of~$\alpha_T^*$ and~$\widehat\alpha_T^*$
        }
        \label{fig:scaling_log_alpha_1D}
    \end{subfigure}
    \caption{Comparison of~$\alpha_T^*$ and~$\widehat\alpha_T^*$ with respect to time.}
    \label{fig:true_scaling_alpha_1D}
\end{figure}
\paragraph{Variance reduction for~$\widehat{\alpha}_T^*$}
For~$\beta = 3$
we obtain a variance reduction of approximately $7\%$ for a time $T = 0.2$ and~$\widehat \alpha_T^* = -0.240$; 
see~\cref{fig:redvar1D}. 
The time~$T$ is close to the time when the Green-Kubo integral
\begin{align}
    \label{eq:1d_gk}
    \rho_T = \beta\int_{0}^{T}\expect\!\left[V'(q_t)V'(q_0)\right]\,\d t,
\end{align}
is close to its asymptotic limit; 
see~\cref{fig:gk1}.
We also report the variance reduction and optimal values estimators of the forcing~$\widehat{\alpha}_T^*$ for smaller values of~$\beta$ than~$3$.
The first panel of~\cref{fig:redvar1D} shows that in this case,
the larger the value of~$\beta$ (i.e the more the dynamics is metastable), 
the greater the variance reduction.
\begin{figure}[h!]
    \centering
    \begin{subfigure}{0.49\textwidth}
        \centering
        \includegraphics[width=\linewidth]{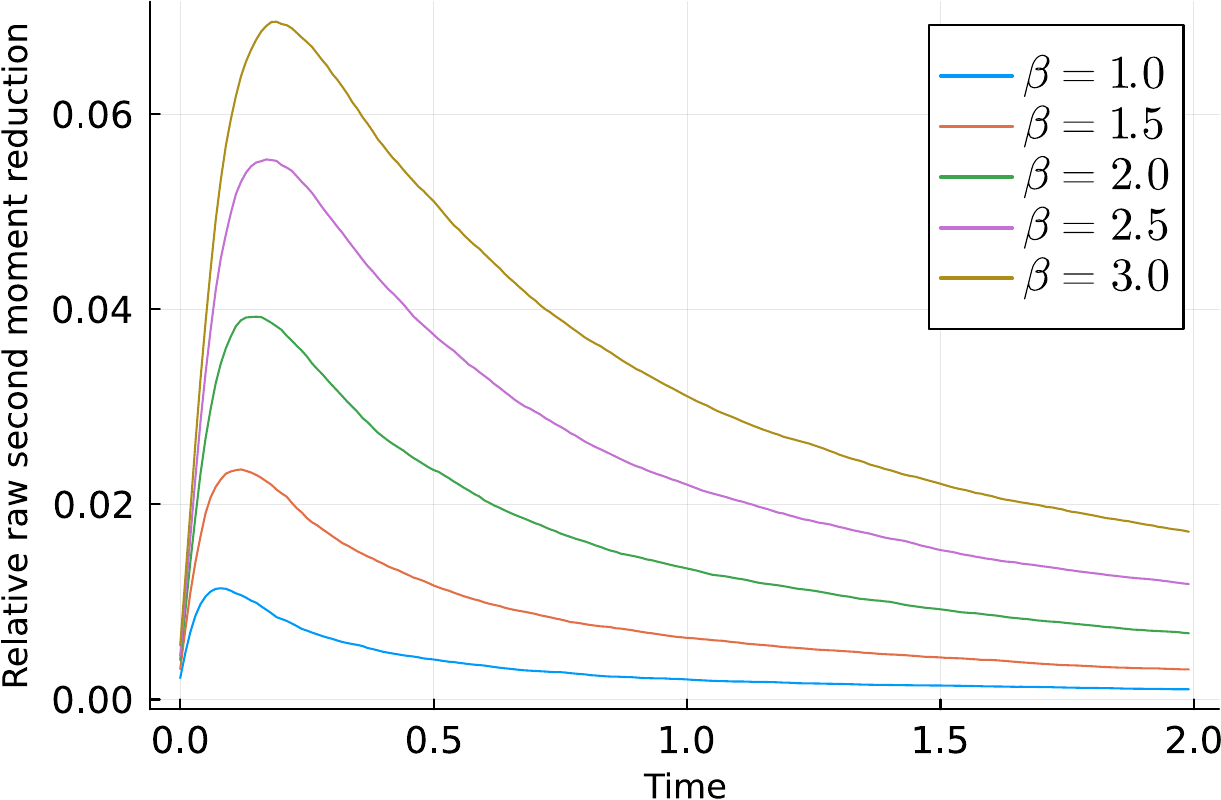}
        \caption{Relative reduction of the raw second moment with respect to~$T$.}
        \label{fig:var_redF_1D}
    \end{subfigure}
    \hfill
    \begin{subfigure}{0.49\textwidth}
        \centering
        \includegraphics[width=\linewidth]{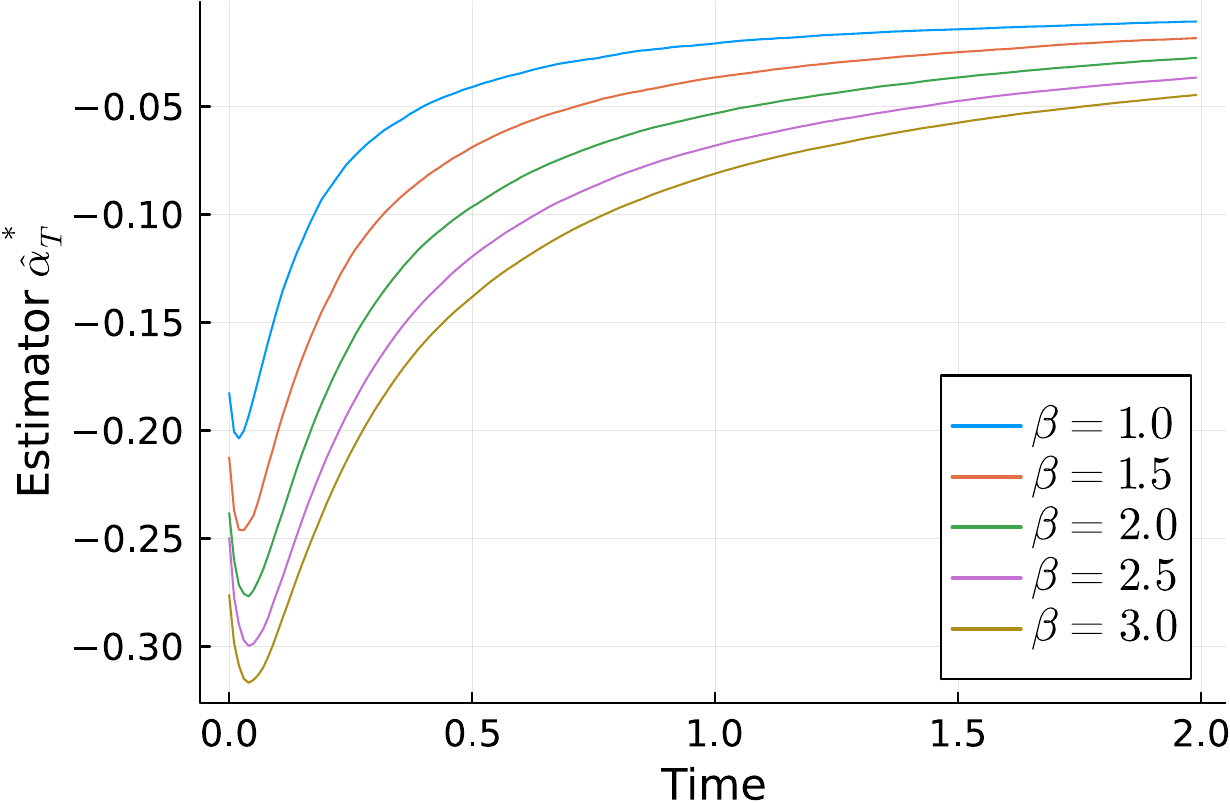}
        \caption{%
            Evolution of~$\widehat{\alpha}^*_T$ with respect to the truncation time~$T$,
            for various values of~$\beta$.
        }
        \label{fig:scaling_alpha_1D}
    \end{subfigure}
    \begin{subfigure}{0.49\textwidth}
        \centering
            \includegraphics[width=\linewidth]{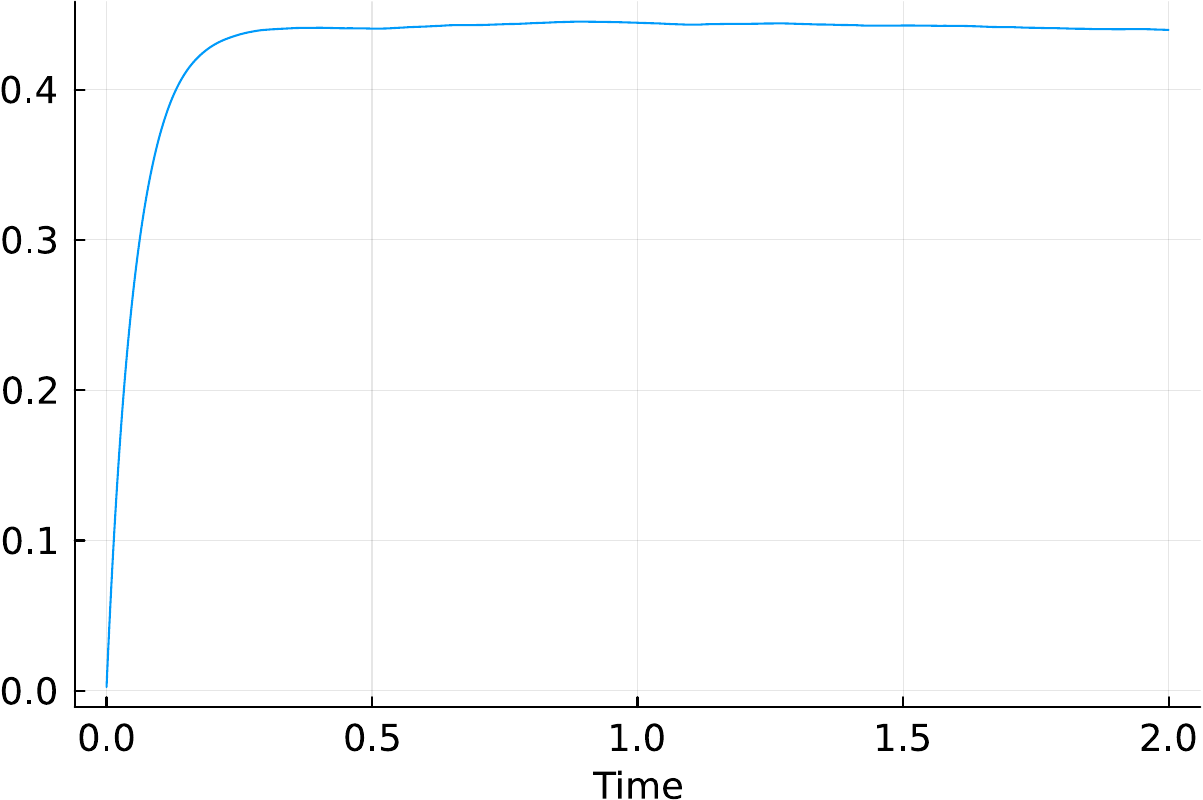}
            \caption{Green--Kubo estimator~\eqref{eq:1d_gk}.}
            \label{fig:gk1}
    \end{subfigure}
    \hfill
    \caption{Numerical results for the one-dimensional dynamics considered in~\cref{ssub:num_1d}.}
    \label{fig:redvar1D}
\end{figure}
\subsubsection{Two dimensional overdamped Langevin}
\label{sec:2d_num}
\paragraph{The model}
We present here a numerical experiment for the overdamped Langevin dynamics in dimension two:
\begin{align}
    \label{eq:ovd_2D}
    \d q_t = -\nabla V(q_t)\,\d t + \sqrt{\frac{2}{\beta}}\,\d W_t,\qquad q_t = (x_t,y_t)\in \torus^2.
\end{align}
Throughout this section we consider
\begin{align}
    \label{eq:potentiel_num2D}
    V(x,y) = \sin(4\pi (x+0.15))\left(2+\frac23\sin(2\pi (x+0.15))\right) + 4\cos(2\pi y) =: A(x) + B(y).
\end{align}
Unless otherwise stated, we fix $\beta = 2$.
Note that the potential~$V$ is separable.
\Cref{fig:trajectory} depicts the level sets of the potential~\eqref{eq:potentiel_num2D},
together with a realization of the dynamics~\eqref{eq:first_derivative},
simulated with the Euler--Maruyama discretization~\eqref{eq:disc_ref}.
This potential has 2 wells of different depths,
so that the dynamics~\eqref{eq:ovd_2D} is metastable.
Both~\cref{fig:trajectory,fig:xcoordinate}
(the latter illustrating only the evolution of the~$x$ coordinate)
show that the particle remains stuck in a well for a long time, before eventually jumping to the other well.
For the following numerical experiment,
the initial condition $q_0$ is sampled from the invariant probability measure with density proportional to~$\ee^{-\beta V(q)}$
by means of the rejection algorithm with uniform proposal.
The biasing drift is taken as~$u = -F'e_1$,
with~$e_1 = (1, 0)^\top$ and where $F$ denotes the free energy associated with the~$x$ coordinate  
(see~\cite{stoltz2010free}):
\begin{align*}
    F(x) &= \displaystyle - \frac{1}{\beta}\ln \left(\frac{1}{Z_\beta}\displaystyle\int_\torus
\e^{-\beta V(x,y)}
\d y\right) = A(x) - \frac{1}{\beta}\ln\!\left(\frac{1}{ Z_{\beta}} \int_\torus
\e^{-\beta B(y)}\d y\right),
\end{align*}
where~$Z_{\beta} = \displaystyle\int_{\torus^2}\e^{-\beta V(x,y)}\d x \, \d y$.
Therefore, $F' = A'$ and the biased dynamics reads
\begin{align}
    \label{eq:biased_dynamics_num}
    \d q_t^\alpha = -\left(\nabla V(q_t^\alpha) + \alpha\sqrt{\frac{2}{\beta}} F'(q_t^\alpha)e_1\right)\,\d t + \sqrt{\frac{2}{\beta}}\,\d W_t.
\end{align}
We consider the computation of the coefficient
 \begin{align}
    \label{eq:coeff_num}
    \rho = \int_0^\infty \expect_\mu\!\bigl[R(q_t)S(q_0)\bigr]\, \d t,
    \qquad S(q)=\beta R(q) = \beta\Bigl(\sin(2\pi x) - \expect_\mu\bigl[\sin(2\pi x)\bigr]\Bigr).
 \end{align}
 The chosen observable $R$ is smooth on the torus, 
 belongs to~$L^2_0(\mu)$ and takes different values in the two wells.
 The factor~$\beta$ in front of~$S$ in~\eqref{eq:coeff_num} is a reference to the previous 1-dimensional case for mobility and to the fact that in the general framework,~$S$ is a conjugated response function, 
 see~\cite[Section 5]{LS}.
 The constant~$\expect_\mu\bigl[\sin(2\pi x)\bigr]$ is pre-calculated with a deterministic quadrature method.
 The simulations were performed for $T = 200$,
$\Delta t = 4\times 10^{-4}$ and a number~$J = 10^8$ of realizations for the estimation of expectations.
 \begin{figure}[h!]
    \centering
    \begin{subfigure}{0.49\textwidth}
        \centering
        \includegraphics[width=\linewidth]{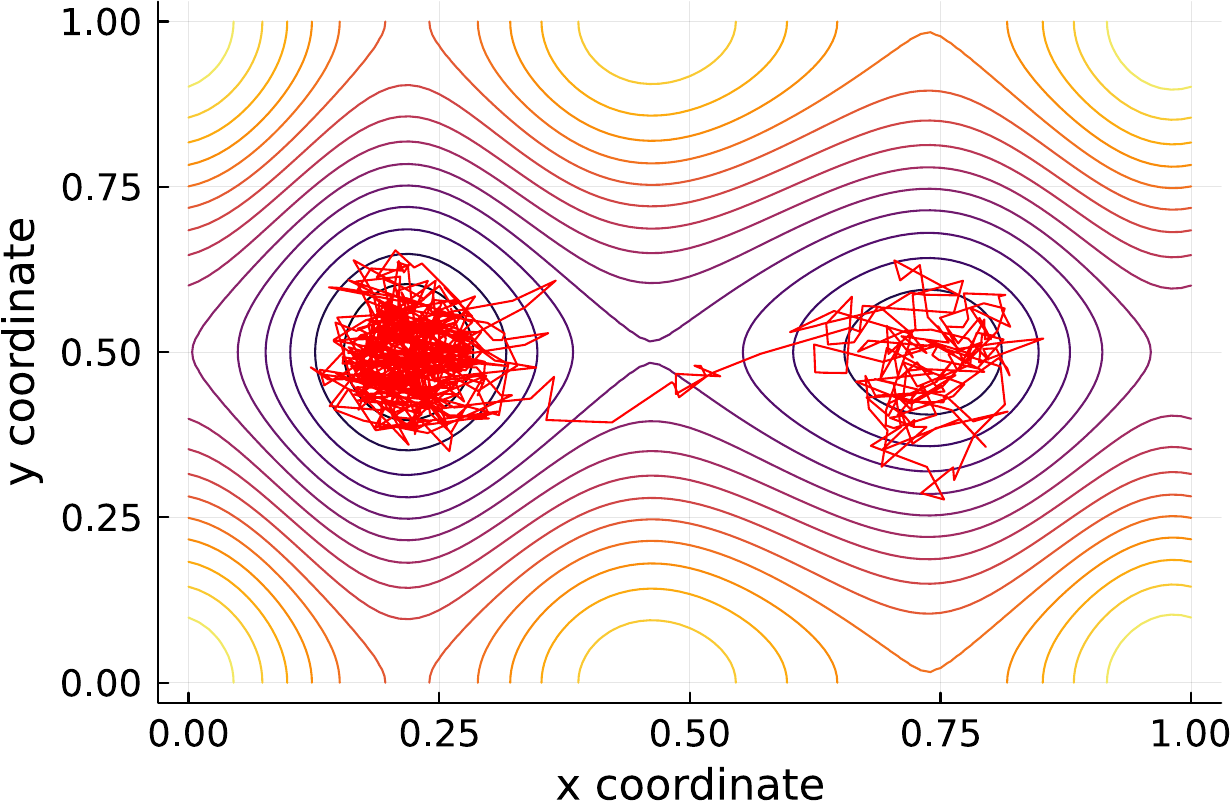}
        \caption{Level sets of the potential~\eqref{eq:potentiel_num2D} with a trajectory until~$T=1$.}
        \label{fig:trajectory}
    \end{subfigure}
    \hfill
    \begin{subfigure}{0.49\textwidth}
        \centering
        \includegraphics[width=\linewidth]{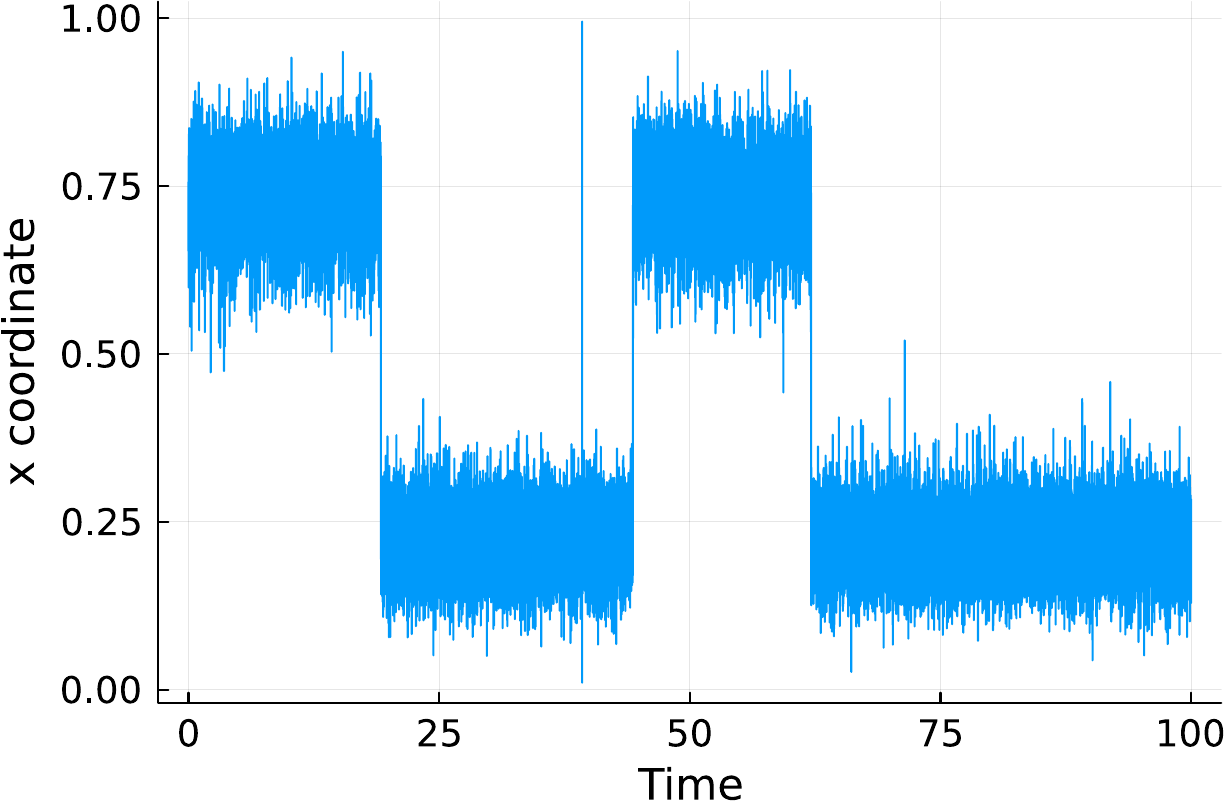}
        \caption{Projected trajectory in the $x$ variable for $\Delta t = 10^{-3}$ and $\beta = 2$.}
        \label{fig:xcoordinate}
    \end{subfigure}
    \caption{Visualization of the level sets of the potential~$V$ and trajectories of the reference dynamics~\eqref{eq:ovd_2D}.}
    \label{fig:all_figures}
\end{figure}
\paragraph{Motivation}
The impact of the parameter~$\alpha$ on the decay rate of the autocorrelation in~\eqref{eq:coeff_num} can be observed in~\cref{fig:multiples_autocor}, 
where we report~$\expect\!\left[R(q_t^\alpha)S(q_0^\alpha)\right]$ for various values of~$\alpha$.
We observe what can be physically expected:
for~$\alpha<0$,
increasing~$|\alpha|$ leads to faster decay of the correlation to~$0$,
while for~$\alpha > 0$, 
increasing~$|\alpha|$ leads to slower decay.
\begin{figure}
    \centering
        \includegraphics[height=6cm]{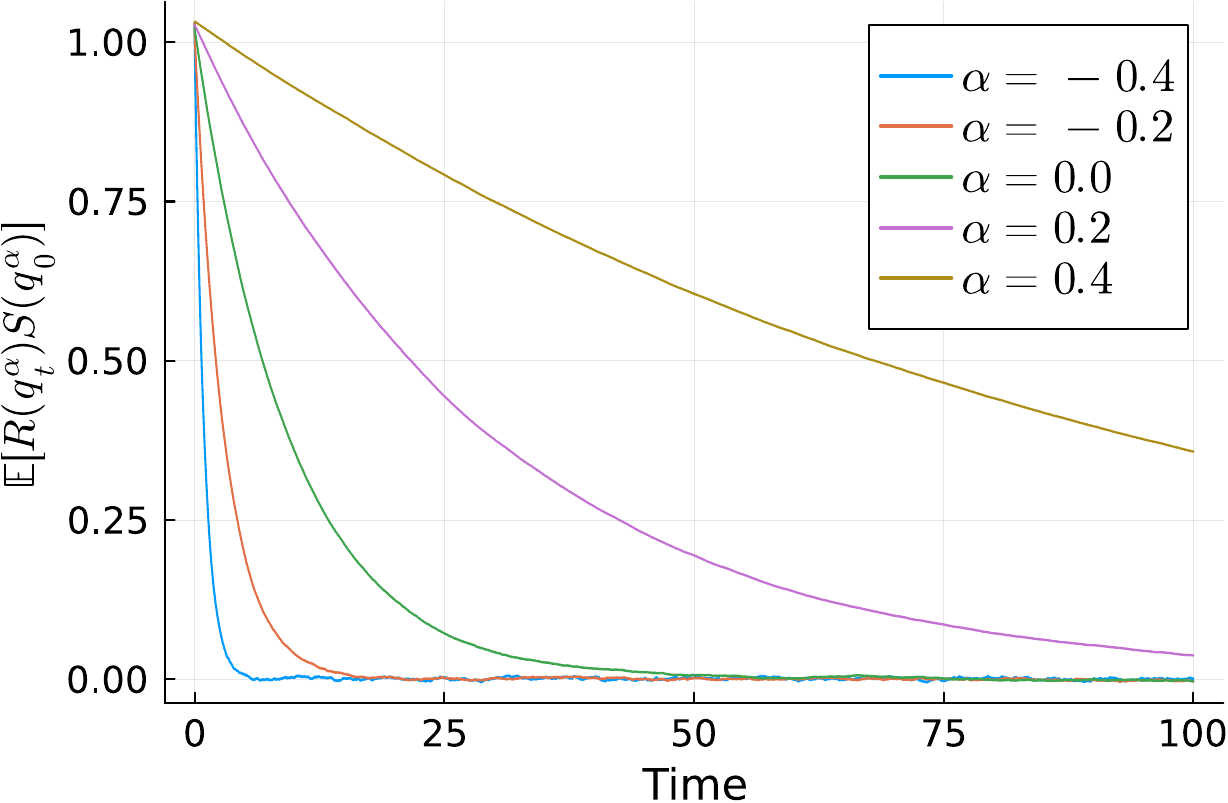}
        \caption{Evolution of~$\expect\!\left[R(q_t^\alpha)S(q_0^\alpha)\right]$, as a function of~$t$, for various values of~$\alpha$.}
        \label{fig:multiples_autocor}
\end{figure}
\paragraph{Scalings}
We next illustrate the asymptotic behavior of~$F_T'(0)$ and $F_T''(0)$ in~\cref{fig:f_prime_glob,fig:f_sec_glob}.
We observe that $F_T'(0)$ and $F_T''(0)$ are indeed asymptotically proportional to~$T$ and $T^2$, 
as predicted by~\cref{lemmaN,lemmaD} respectively.
In order to confirm these scalings we estimate the coefficients~$D_1$ and~$D_2$ by solving Poisson equations similarly to what was done in the one dimensional case, based on the expression~\eqref{eq:sol_poiss_analytical} and remplacing~$V$ by~$A$ (this is indeed possible since the potential~\eqref{eq:potentiel_num2D} is separable).
We obtain~$D_1 \simeq -67.2$ and~$D_2 \simeq 6.78\times 10^3$.
The numerical simulation gives~$D_1 \simeq -67.7$ and~$D_2 \simeq 7.32\times 10^3$, which respectively correspond to
a relative error compared to the analytical solution of~$0.74\%$ and~$8.0\%$.
\begin{figure}[h!]
    \centering 
    \begin{subfigure}{0.49\textwidth}
        \centering
        \includegraphics[width=\linewidth]{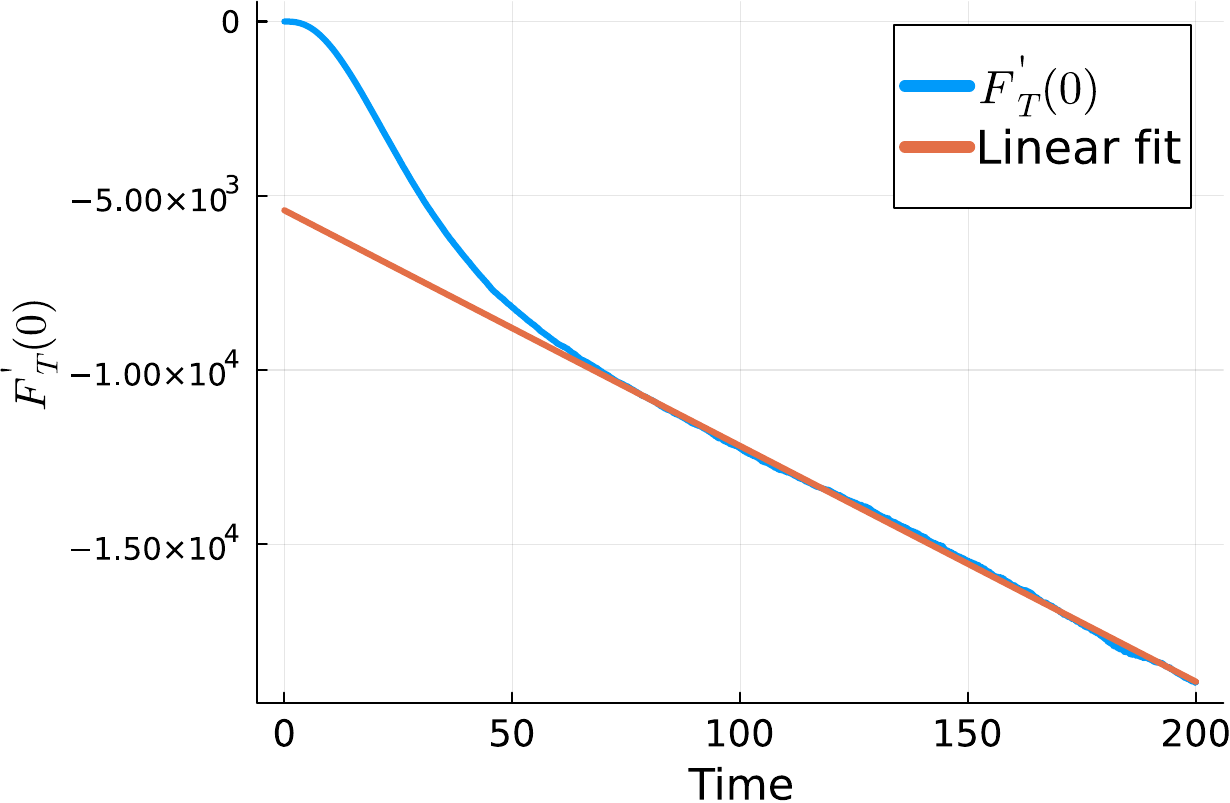}
        \caption{Time behavior of~$F_T'(0)$ and affine fit.}
        \label{fig:f_prime_glob}
    \end{subfigure}%
    \hfill
    \centering
    \begin{subfigure}{0.49\textwidth}
        \centering
        \includegraphics[width=\linewidth]{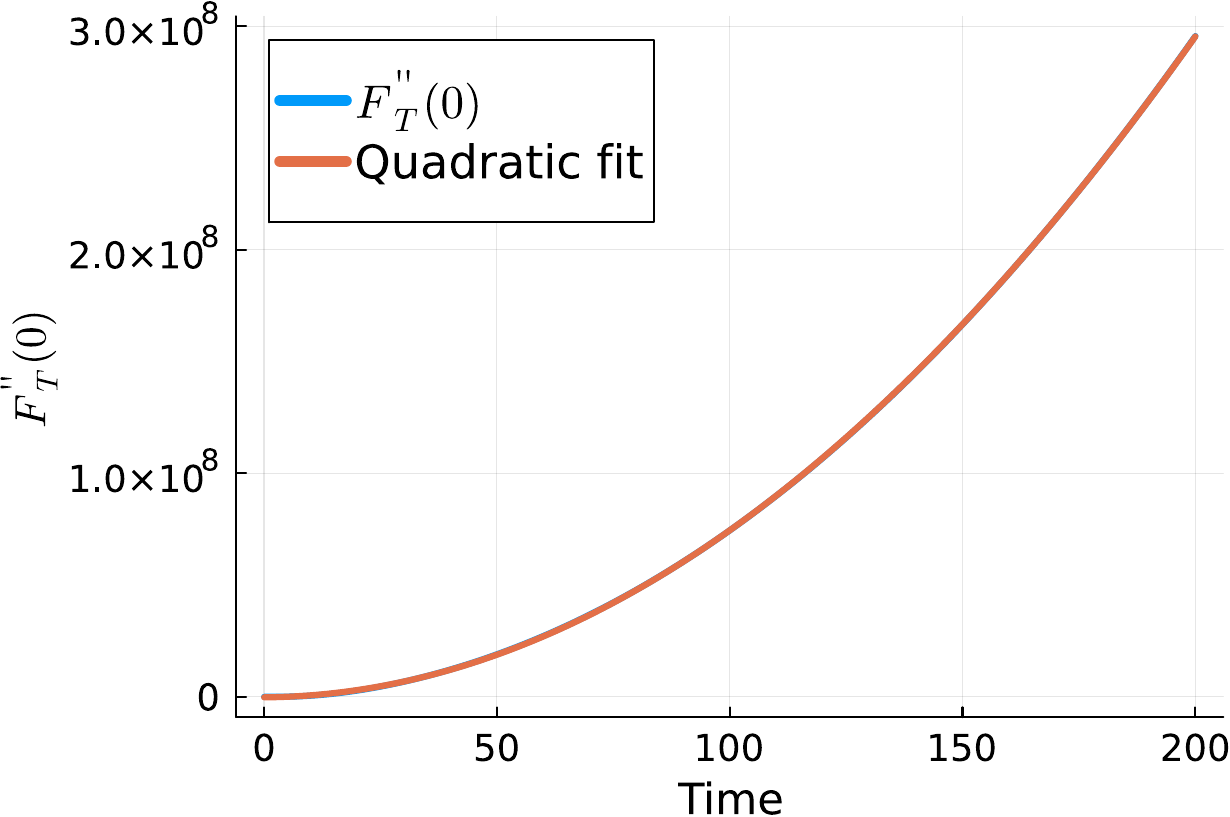}
        \caption{Time behavior of~$F_T''(0)$ with a polynomial fit of order~$2$.}
        \label{fig:f_sec_glob}
    \end{subfigure}%
    \caption{Time behavior of~$F_T'(0)$ and~$F_T''(0)$.}
    \label{fig:f_sec}
\end{figure}
\paragraph{Variance reduction and optimal~$\alpha$}
\Cref{fig:var_redF,fig:scaling_alpha} show the relative variance reduction of~$\displaystyle\frac{\left|F_T(0)- F_T(\widehat{\alpha}^*_T)\right|}{F_T(0)}$
and the behavior of~$\widehat{\alpha}^*_T$ for various values of~$\beta$.
For almost all values of~$\beta$ and $T$ considered, 
the relative reduction of~$F_T$ is less than~$1\%$.
One can observe that~\cref{fig:var_redF,fig:scaling_alpha} illustrate~\eqref{eq:gain_var_rel}, 
\textit{i.e.} that the lower the value of~$\widehat \alpha_T^*$ is,
the smaller the variance reduction is.
However,
according to~\cref{fig:multiples_autocor},
the autocorrelation function has not converged yet.
This example further demonstrates,
for a slightly more complex example than in~\cref{ssub:num_1d},
that the benefits in terms of variance reduction provided by the method studied in this paper are rather modest for large~$T$.
\begin{figure}[h!]
    \centering
    \begin{subfigure}{0.49\textwidth}
        \centering
        \includegraphics[width=\linewidth]{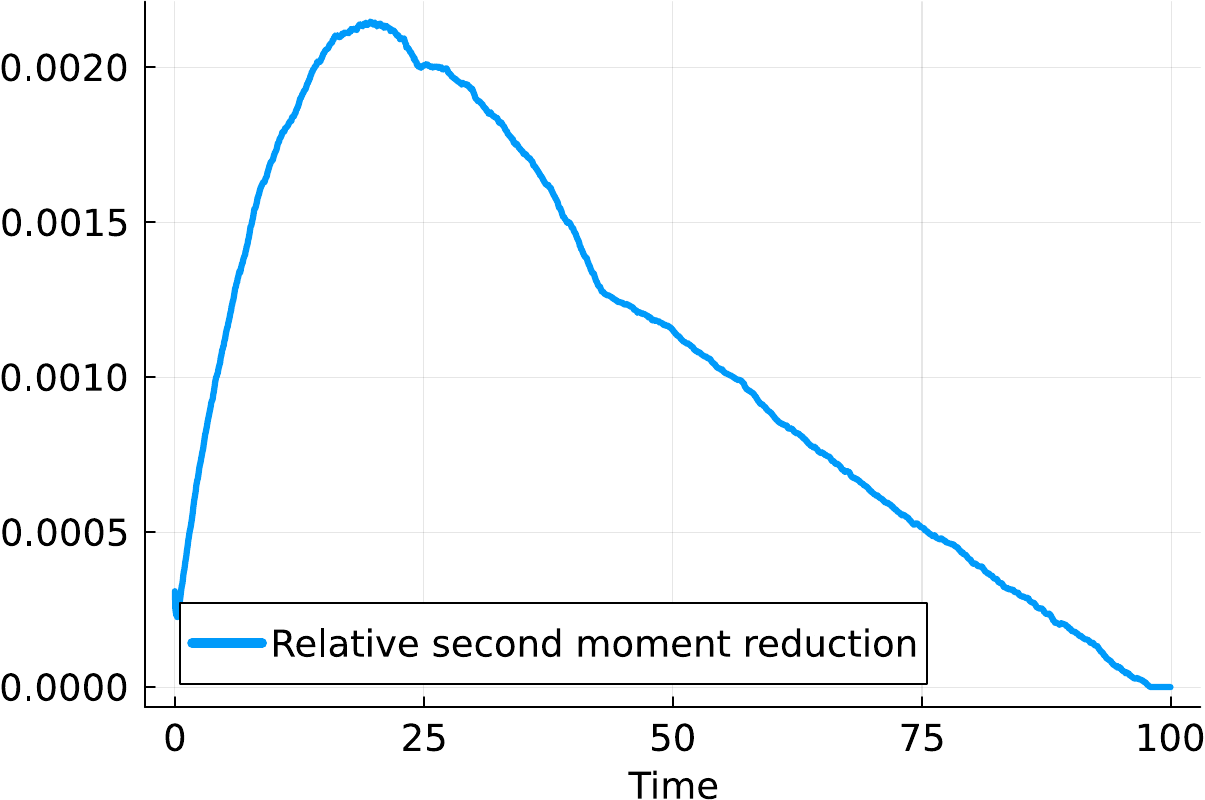}
        \caption{Relative reduction of the raw second moment with respect to~$T$.}
        \label{fig:var_redF}
    \end{subfigure}
    \hfill
    \begin{subfigure}{0.49\textwidth}
        \centering
        \includegraphics[width=\linewidth]{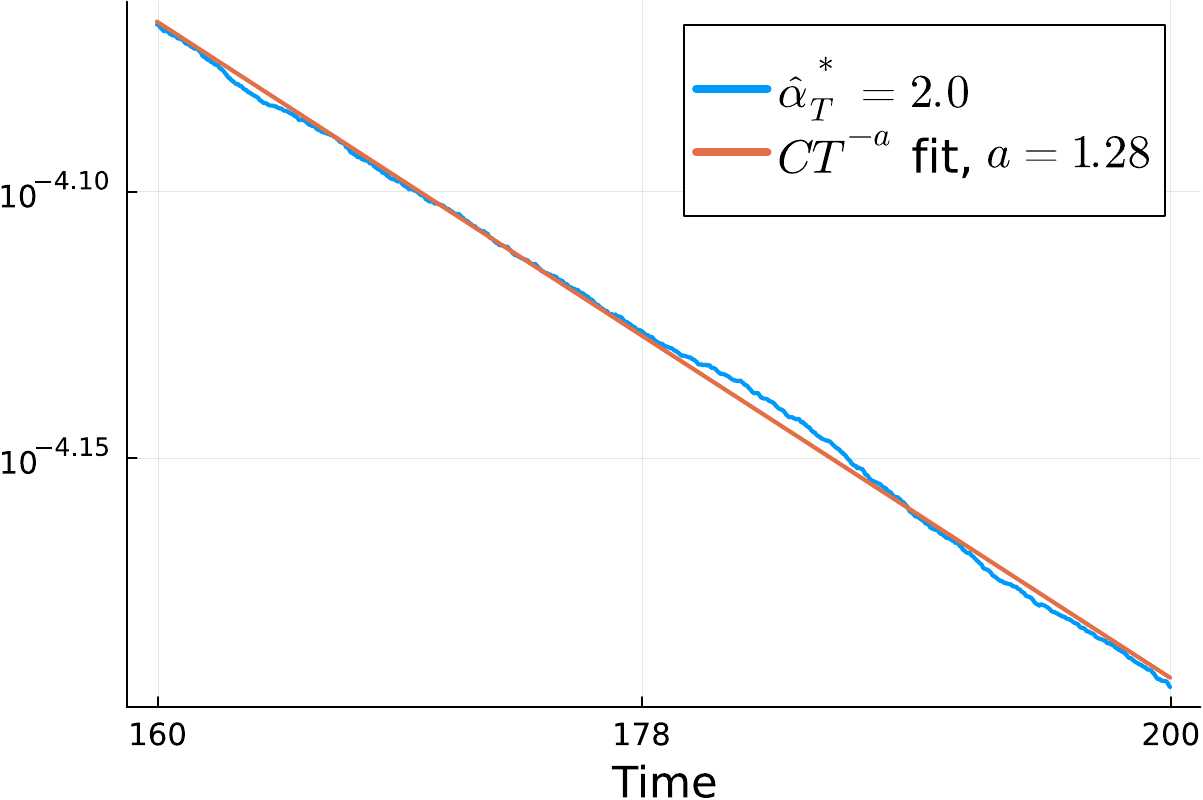}
        \caption{Scaling log-log of~$\widehat{\alpha}^*_T$.}
        \label{fig:scaling_alpha}
    \end{subfigure}
    \caption{Behavior of~$\widehat{\alpha}^*_T$.}
    \label{fig:alpha_behavior_2D}
\end{figure}
\section{Extensions and perspectives}
\label{sec:conclusion}
In this final section,
we list possible extensions of the variance reduction method for the calculation of transport coefficients presented in this work.
\begin{itemize}
    \item
        The first natural extension is to optimize the functional form of the biasing drift~$u$, 
        which amounts to optimizing the value of~$\frac{D_1^2}{D_2}$ in view of~\eqref{eq:conv_min} and~\eqref{eq:gain_var_rel}.
        In dimension one,
        it is natural to choose a perturbation of the form $u = -V'$.
        However,
        this approach is impractical in high-dimensional setting,
        such as for the simulation of proteins, as~$V'$ will be large, 
        and will depend on irrelevant degrees of freedom such as the solvent. 
        A biasing by a function depending only on a selected number of degrees of freedom (encoded by a collective variable),
        such as the free energy considered is~\cref{sec:2d_num}, 
        is more relevant.
    \item
        Throughout the theoretical part of this paper,
        we have restricted our attention to the continuous-time setting,
        but in practice the dynamics and estimators must be discretized, 
        as done in~\cite{DHK,donati2018girsanov}.
        Therefore, a natural direction for future work would be carry out the numerical analysis of the weighted Green--Kubo estimator,
        \textit{i.e.}\ after time discretization, 
        and rigorously quantify the time-step bias.
    \item
        Since, for a given $\alpha$, the stochastic process $\mathcal E_t(\alpha)$ in~\eqref{girsanov-application} is a martingale,
        we could also consider instead of~\eqref{eq:good_estimator} the estimator
        \[
            \widehat{B}_T^{\alpha} = \int_0^T R\left(q_t^\alpha\right) S\left(q_0^\alpha\right) \mathcal{E}_t(\alpha) \, \d t.
        \]
        Preliminary simulations suggest a variance reduction similar to that obtained earlier in this work. 
        However, 
        it is more difficult to perform the theoretical analysis developed here and thus to obtain precise analytical results.
    \item
        As another research direction,
        one could extend our results to the setting of degenerate diffusions,
        such as underdamped Langevin dynamics,
        which are often more relevant than overdamped Langevin dynamics in applications.
        Let us mention that path reweighting has already been employed in this context, see~\cite{kieninger2023girsanov}.
\end{itemize}
However, 
given the relatively modest variance reduction observed in the numerical experiments,
it is unclear whether such a research effort is warranted and so it seems more relevant to focus on other more effective methods such as fluctuation methods~\cite{10.1093/imanum/drac073} or methods based on Einstein's formula~\cite{pavliotis2024neuralnetworkapproachesvariance,bob}.  
\appendix
\section{Auxiliary results}
This section contains two useful auxiliary results.
The first one that follows concerns the expectation of the product of three Brownian integrals.
\begin{lemma}
    \label{lemme:egalite-moment-3}
    Let $T>0$,
    and suppose that $f$, $g$ and $h$ are smooth functions on the torus.
    Introduce the Brownian martingales
    \[
        I_T = \int_0^T f(q_t)\,\d W_t, \qquad  J_T = \int_0^T g(q_t)\,\d W_t, \qquad  K_T = \int_0^T h(q_t)\,\d W_t,
    \]
    with~$q_t$ a continuous stochastic process adapted to~$(W_t)_{t\geq0}$.
    Then, for any initial condition~$q_0\in\torus^d$,
    \[
        \expect^{q_0}\!\left[I_T\,J_T\,K_T \right] =
        \expect^{q_0}\left[ \langle J,K\rangle_T I_T\right] + \expect^{q_0}\left[ \langle I,K\rangle_T J_T\right] + \expect^{q_0}\left[ \langle I,J\rangle_T K_T\right],
    \]
    where $\langle \, \cdot \, , \, \cdot \, \rangle_T$ denotes the covariation process at time~$T$.
\end{lemma}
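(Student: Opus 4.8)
The plan is to apply Itô's formula to the product $I_t J_t K_t$ and recognise that, after subtracting the three covariation corrections appearing in the statement, the drift part cancels identically, leaving a martingale that vanishes at time $0$. Since $f$, $g$, $h$ are continuous on the compact torus they are bounded, so $I$, $J$, $K$ are continuous square-integrable martingales with $\langle I, J\rangle_t = \int_0^t f(q_s)\cdot g(q_s)\,\d s$ and likewise for the other pairs; in particular these covariation processes are bounded on $[0,T]$, e.g.\ $|\langle I, J\rangle_t| \le \|f\|_{L^\infty}\|g\|_{L^\infty}T$. Moreover, by the Burkholder--Davis--Gundy inequality~\eqref{eq:bdg} together with the boundedness of the integrands, $I_t$, $J_t$, $K_t$ have finite moments of every order, uniformly in $t \in [0,T]$.

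First I would compute, by applying the two-factor Itô product rule twice (first to $I_t J_t$, then to $(I_t J_t)\,K_t$, using $\d\langle IJ, K\rangle_t = I_t\,\d\langle J, K\rangle_t + J_t\,\d\langle I, K\rangle_t$ since the brackets only see the martingale parts),
\begin{align*}
    \d(I_t J_t K_t)
    &= J_t K_t\,\d I_t + I_t K_t\,\d J_t + I_t J_t\,\d K_t \\
    &\quad + K_t\,\d\langle I, J\rangle_t + J_t\,\d\langle I, K\rangle_t + I_t\,\d\langle J, K\rangle_t.
\end{align*}
Equivalently, setting
\[
    Z_t := I_t J_t K_t - I_t\langle J, K\rangle_t - J_t\langle I, K\rangle_t - K_t\langle I, J\rangle_t,
\]
and using $\d\bigl(I_t\langle J, K\rangle_t\bigr) = \langle J, K\rangle_t\,\d I_t + I_t\,\d\langle J, K\rangle_t$ (and the analogous identities), all finite-variation contributions cancel, so that
\[
    \d Z_t = \bigl(J_t K_t - \langle J, K\rangle_t\bigr)\d I_t + \bigl(I_t K_t - \langle I, K\rangle_t\bigr)\d J_t + \bigl(I_t J_t - \langle I, J\rangle_t\bigr)\d K_t .
\]
Thus $Z$ is a stochastic integral against the Brownian motion, with $Z_0 = 0$.

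Next I would check that $Z$ is a true martingale, not merely a local one. Each integrand is a bounded function ($f$, $g$, or $h$) times a factor of the form $J_t K_t - \langle J, K\rangle_t$; since $\langle J, K\rangle_t$ is bounded on $[0,T]$ and $\expect^{q_0}[|J_t K_t|^2] \le \expect^{q_0}[J_t^4]^{1/2}\expect^{q_0}[K_t^4]^{1/2}$ is bounded on $[0,T]$ by the moment estimates above, each integrand lies in $L^2\bigl(\Omega \times [0,T]\bigr)$, so the corresponding It\^o integrals are genuine martingales. Taking expectations in $Z_T = Z_0 = 0$ and noting that the bracket terms vanish at $t = 0$ (as $I_0 = J_0 = K_0 = 0$) yields exactly
\[
    \expect^{q_0}[I_T J_T K_T] = \expect^{q_0}\!\left[\langle J, K\rangle_T I_T\right] + \expect^{q_0}\!\left[\langle I, K\rangle_T J_T\right] + \expect^{q_0}\!\left[\langle I, J\rangle_T K_T\right].
\]

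I do not expect a genuine obstacle here: the only points requiring some care are the bookkeeping in the iterated product rule (in particular the identity for $\d\langle IJ, K\rangle_t$ and the cancellation of the finite-variation terms) and the integrability check that upgrades the local martingale $Z$ to a true martingale, both of which are routine consequences of the boundedness of $f$, $g$, $h$ on the torus and the Burkholder--Davis--Gundy inequality.
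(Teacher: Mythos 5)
Your proof is correct and follows essentially the same route as the paper's: Itô's formula applied to the triple product $I_tJ_tK_t$, followed by the martingale property of the stochastic integrals (justified by boundedness of $f$, $g$, $h$). The only cosmetic difference is in how the drift terms are converted to the stated form: you subtract $I_t\langle J,K\rangle_t$, $J_t\langle I,K\rangle_t$, $K_t\langle I,J\rangle_t$ and observe that the finite-variation parts cancel, leaving a true martingale $Z$ with $Z_0=0$, whereas the paper takes expectations first and uses the martingale property of $I$, $J$, $K$ to replace $\int_0^T\expect^{q_0}\!\left[I_t\,g(q_t)h(q_t)\right]\d t$ by $\expect^{q_0}\!\left[I_T\int_0^T g(q_t)h(q_t)\,\d t\right]$; both steps are routine and equivalent.
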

\begin{proof}
    Letting $\Psi(I_t,J_t,K_t) = I_t J_t K_t$ and applying It\^o's formula,
    we obtain that
    \begin{align*}
        \d \Psi (I_t,J_t,K_t) 
            =&\,
            J_t K_t f(q_t) \d W_t + I_t K_t g(q_t) \d W_t + I_t J_t h(q_t) \d W_t \\
            &+ I_t\langle J,K \rangle_t + J_t\langle I, K\rangle_t + K_t\langle I, J\rangle_t.
    \end{align*}
    By integrating in time,
    \begin{align*}
        \Psi_T
            &=
            \int_{0}^{T}
            \Bigl(
                I_t g(q_t) h(q_t)
                + J_t f(q_t) h(q_t)
                + K_t f(q_t) g(q_t)
            \Bigr)
            \, \d t
            \\
            &\qquad
            + \int_{0}^{T}
            \Bigl( J_t K_t f(q_t)
                + I_t K_t g(q_t)
            + I_t J_t h(q_t) \Bigr) \, \d W_t.
    \end{align*}
    Then,
    taking the expectation and using the martingale property of stochastic integrals,
    we have
    \begin{align*}
        \expect^{q_0} \!\left[\Psi_T\right]
            &=
            \int_{0}^{T}
            \Bigl(
                \expect^{q_0}\!\left[ I_t g(q_t) h(q_t) \right]
                + \expect^{q_0} \!\left[ J_t f(q_t) h(q_t) \right]
                + \expect^{q_0} \!\left[ K_t f(q_t) g(q_t) \right]
            \Bigr) \d t \\
            &=
            \expect^{q_0} \biggl[ I_T \int_{0}^{T} g(q_t) h(q_t) \, \d t \biggr]
            + \expect^{q_0} \biggl[ J_T \int_{0}^{T} f(q_t) h(q_t) \, \d t \biggr] \\
            &\quad + \expect^{q_0} \biggl[ K_T \int_{0}^{T} f(q_t) g(q_t) \, \d t \biggr],
    \end{align*}
    which concludes the proof.
\end{proof}
    The following result combines a central limit theorem for martingales with non-stationary increments,
    with a central limit theorem for additive functionals of Markov processes started at a point.
    For results of the former type, see~\cite{MR668684,MR2368952};
    and for results of the latter type,
    see~\cite[Section 8]{cattiaux}.
\begin{theorem}
    \label{corollary:central_limit}
    Suppose that~\cref{assumption:ex_mes_inv} is satisfied (recall in particular that $\sigma$ takes values in~$\real^{d \times m}$),
    and let~$(q^x_t)_{t \geq 0}$, 
    for~$x \in \torus^d$, 
    denote the solution to
    \[
        \d q^x_t = b (q^x_t) \, \d t + \sigma(q^x_t) \, \d W_t, \qquad q^x_0 = x.
    \]
    Consider smooth functions $g\colon \torus^d \to \real^n$
    and $h\colon \torus^d \to \real^m$ 
    and denote by $\psi \in H^1_0(\mu)^n$ denote the~$\real^n$-valued solution to the Poisson equation $- \mathcal L \psi = g$.
    Then
    \[\displaystyle
        \begin{bmatrix}
            \displaystyle\frac{1}{\sqrt{T}} \int_{0}^{T} g(q^x_t) \, \d t  \\[.3cm]
            \displaystyle\frac{1}{\sqrt{T}} \int_{0}^{T} h(q^x_t) \cdot \d W_t
        \end{bmatrix}
        \xrightarrow[T \to \infty]{\rm Law}
        \mathcal N\left(
            0,
            \Sigma \right),
    \]
    with
    \[
        \Sigma = \left[\begin{matrix}
            \expect_\mu\!\left[\nabla \psi \sigma \sigma^\top \nabla \psi^\top\right] & \expect_\mu\!\left[\nabla \psi \sigma h\right] \\
            \expect_\mu\!\left[h^\top \sigma^\top \nabla \psi^\top\right] & \expect_\mu\!\left[h^\top h\right]
        \end{matrix}\right] \in \real^{(n+1)\times (n+1)},
        \]
    where $\nabla \psi \colon \torus^d \to \real^{n \times d}$ is the Jacobian matrix of~$\psi$.
\end{theorem}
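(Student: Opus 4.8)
The plan is to reduce the joint statement to a central limit theorem for a single continuous $\real^{n+1}$-valued martingale. The only genuinely non-martingale piece is the additive functional $T^{-1/2}\int_0^T g(q^x_t)\,\d t$, which we remove using the Poisson equation exactly as in the proof of \cref{proposition:asymptotic_variance_gk}. Since $\psi$ solves $-\mathcal L \psi = g$ componentwise, ellipticity and smoothness of the coefficients in \cref{assumption:ex_mes_inv} give $\psi \in \mathcal C^\infty(\torus^d)^n$ by elliptic regularity, so $\psi$ and $\nabla\psi$ are bounded on the compact torus; It\^o's formula then yields
\[
    \frac{1}{\sqrt T}\int_0^T g(q^x_t)\,\d t
    = \frac{\psi(q^x_0) - \psi(q^x_T)}{\sqrt T}
    + \frac{1}{\sqrt T}\int_0^T \nabla\psi(q^x_t)\,\sigma(q^x_t)\,\d W_t .
\]
The boundary term tends to $0$ in $L^2$, hence in probability, by boundedness of $\psi$. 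Therefore, writing
\[
    M_T := \frac{1}{\sqrt T}\int_0^T \Theta(q^x_t)\,\d W_t,
    \qquad
    \Theta := \begin{pmatrix} \nabla\psi\,\sigma \\ h^\top \end{pmatrix} \colon \torus^d \to \real^{(n+1)\times m},
\]
whose second (scalar) block is precisely $T^{-1/2}\int_0^T h(q^x_t)\cdot \d W_t$, it suffices by Slutsky's lemma to prove that $M_T \xrightarrow[T\to\infty]{\rm Law} \mathcal N(0,\Sigma)$.

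For this I would invoke the martingale central limit theorem for continuous martingales: since $M$ has continuous paths there is no Lindeberg/jump term to check, so convergence of the quadratic variation is the only hypothesis. Here
\[
    \langle M \rangle_T = \frac{1}{T}\int_0^T (\Theta\Theta^\top)(q^x_t)\,\d t,
    \qquad
    \Theta\Theta^\top = \begin{pmatrix} \nabla\psi\,\sigma\sigma^\top\nabla\psi^\top & \nabla\psi\,\sigma h \\ h^\top\sigma^\top\nabla\psi^\top & h^\top h \end{pmatrix},
\]
and the entries of $\Theta\Theta^\top$ are exactly the integrands appearing in the definition of $\Sigma$. By the ergodic theorem for the diffusion $(q^x_t)$ — which is uniformly ergodic under \cref{assumption:ex_mes_inv} — the time average $T^{-1}\int_0^T F(q^x_t)\,\d t$ converges in probability to $\expect_\mu[F]$ for every bounded continuous $F$, so $\langle M\rangle_T \to \Sigma$ in probability. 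Applying the martingale CLT (see \cite{MR668684,MR2368952}), together with the additive-functional CLT started at a point (see \cite[Section~8]{cattiaux}), then gives $M_T \xrightarrow[T\to\infty]{\rm Law}\mathcal N(0,\Sigma)$, and combining with the first paragraph and Slutsky's lemma concludes the proof.

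The main obstacle is the \emph{non-stationary} initialisation: because the dynamics starts at a deterministic point $x$ rather than from $\mu$, one cannot quote the stationary martingale CLT verbatim, and the law of large numbers for $\langle M\rangle_T$ must be justified for a point-mass initial condition. With the exponential ergodicity available on the torus this is routine — one bounds $T^{-1}\int_0^T \expect^x[F(q_t)]\,\d t = T^{-1}\int_0^T (\ee^{t\mathcal L}F)(x)\,\d t \to \expect_\mu[F]$ using the decay of the semigroup on $L^2_0(\mu)$, and controls the variance of the time average by a standard double-integral estimate — but it is the one step where one uses the pointwise (not merely $L^2(\mu)$) regularisation of $\ee^{t\mathcal L}$, which is why smoothness of $b$ and $\sigma$ in \cref{assumption:ex_mes_inv} matters. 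The references cited just before the statement package precisely this combination of a martingale CLT with non-stationary increments with an additive-functional CLT started at a point.
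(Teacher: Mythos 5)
Your proposal is correct and follows essentially the same route as the paper: It\^o's formula with the Poisson equation converts the additive functional into a stochastic integral plus a vanishing boundary term, and the resulting $\real^{n+1}$-valued continuous martingale is handled by a martingale CLT whose only hypothesis is convergence of the (matrix) quadratic variation, verified by an ergodic-average argument that works for a deterministic initial condition. The paper merely supplies the technical scaffolding you gesture at — a sequence of rescaled martingales $M_k(t)=k^{-1/2}\int_0^{kt}v(q_s)\,\d W_s$, the functional CLT in Skorohod space, the continuous mapping theorem for pointwise evaluation, and a Slutsky step for the fractional part $t-\lfloor t\rfloor$ — but the decomposition and the key convergence step are identical.
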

\begin{proof}
    Note first that, by It\^o's formula,
    \begin{align*}
        \begin{bmatrix}
            \displaystyle\frac{1}{\sqrt{T}} \int_{0}^{T} g(q^x_t) \, \d t  \\[.3cm]
            \displaystyle\frac{1}{\sqrt{T}} \int_{0}^{T} h (q^x_t) \cdot \d W_t
       \end{bmatrix}
       &=
       \begin{bmatrix}
        \displaystyle\frac{1}{\sqrt{T}} \int_{0}^{T} -\mathcal{L}\psi(q^x_t) \, \d t  \\
        \displaystyle \frac{1}{\sqrt{T}} \int_{0}^{T} h(q^x_t) \cdot \d W_t
      \end{bmatrix}
      \\
       &=
       \begin{bmatrix}
        \displaystyle\frac{1}{\sqrt{T}} \int_{0}^{T} \sigma^\top (q^x_t)\nabla\psi(q^x_t) \cdot \d W_t  \\
        \displaystyle \frac{1}{\sqrt{T}} \int_{0}^{T} h(q^x_t) \cdot \d W_t
       \end{bmatrix}
       -
       \frac{1}{\sqrt{T}}
       \begin{bmatrix}
           \psi(q_T) - \psi(q_0)\\
            0
       \end{bmatrix}.
    \end{align*}
    Since $\psi\colon \torus^d \to \real^{n}$ is bound,
    the second term tends to 0 almost surely.
    Thus, by Slutsky's theorem, it is sufficient to prove that
    \[
        \mathcal M(t) := \frac{1}{\sqrt{t}} \int_{0}^{t} v(q^x_s) \, \d W_s
        \xrightarrow[t \to \infty]{\rm Law}
        \mathcal N\left(
            0,
            \Sigma
        \right),
        \qquad
        v(q) :=
        \begin{bmatrix}
            \sigma^\top (q)\nabla\psi(q) \\
            h^\top(q)
       \end{bmatrix}
       \in \real^{(n+1) \times m}.
    \]
    To this end,
    denote by $(M_k)_{k \in \mathbb{N}}$ the following sequence of martingales:
    \[
        M_k(t) = \frac{1}{\sqrt{k}}\int_0^{kt}v(q_s)\,\d W_s.
    \]
    For any $k\in\mathbb{N}$,
    the matrix-valued quadratic variation~(see \cite[Definition 3.18]{pavliotis2008multiscale}) of $M_k(t)$ is given by
    \[
        \langle M_k \rangle_t
        = \frac{1}{k}\int_0^{kt}v(q_s) v(q_s)^\top \,\d s.
    \]
    Let $v^{i,j}(q) = v_i(q)\cdot v_j(q)$,
    where for $i \in \{1, \dotsc, n+1\}$
    the notation $v_i$ refers to the $i$-th row of $v$.
    Denote by~$\langle M_k \rangle_t^{i,j}$ the entry on row~$i$ and column~$j$ of $\langle M_k \rangle_t$.
    Then, writing $v^{i,j} = \Pi v^{i,j} + \expect_{\mu}\!\left[v^{i,j}\right]$,
    we obtain that
    \begin{align}
        \label{eq:quadra_covar}
        \langle M_k \rangle_t^{i,j} = \left( \expect_{\mu}\left[v^{i,j}\right]
        + \frac{1}{kt}\int_0^{kt}\Pi v^{i,j}(q_s)\,\d s \right) t.
    \end{align}
    The second term in the bracket on the right-hand side is an ergodic average of a mean-zero observable.
    By the usual reasoning involving a Poisson equation,
    see e.g.~\cite[Section 4.2]{MattinglyStuartTretyakov2010} or the proof of~\cref{proposition:asymptotic_variance_gk} here,
    it holds that
    \[
        \expect\!\left[\,\left|\frac{1}{kt}\int_0^{kt}\Pi v^{i,j}(q_s)\,\d s\right|^2\right] \xrightarrow[k \to \infty]{} 0.
    \]
    By continuity of the sum,
    and since convergence in squared expectation implies convergence in distribution,
    we conclude, 
    that for all $t \geq 0$,
    \begin{align}
        \label{lim:distrib_matrix}
        \langle M_k \rangle_t^{i,j}
        \xrightarrow[k \to \infty]{\rm Law} \expect_{\mu}\left[v^{i,j}\right]t,\qquad
        \langle M_k \rangle_t
        \xrightarrow[k \to \infty]{\rm Law} \Sigma t.
    \end{align}
    Since realizations of $(M_k)_{k\in\mathbb{N}}$ are almost surely continuous,
    they almost surely exhibit no jumps.
    Together with~\eqref{lim:distrib_matrix},
    this implies that the assumptions of~\cite[Theorem 2.1]{MR2368952} are satisfied,
    and so this theorem gives
    \[
        M_k \xrightarrow[k\to \infty]{\rm Law} \sqrt{\Sigma} B \qquad \text{with respect to $D^{n+1}$},
    \]
    where $B$ is a standard $\real^{n+1}$ Brownian motion and $D \equiv D([0, \infty))$ is the space of right-continuous real-valued functions on~$[0, \infty)$ endowed with the Skorohod $J_1$ topology.
    To conclude, we use the continuous mapping theorem~\cite[Theorem 2.7]{MR1700749}.
    For $t \geq 0$, let~$\mathcal P_t$ denote the pointwise evaluation functional
    \[
        \mathcal P_t \colon D^{n+1} \ni f \mapsto f(t) \in \real^{n+1}.
    \]
    It can be shown that this operator is continuous at any continuous $f$,
    see~\cite{2022arXiv221016026K}.
    Since Brownian motion is almost surely continuous,
    we deduce from the continuous mapping theorem that
    \begin{equation}
        \label{eq:pointwise_convergence}
        \mathcal P_t M_k
        \xrightarrow[k \to \infty]{\rm Law}
        \mathcal P_t \left(\sqrt{\Sigma} B\right) \sim \mathcal N(0, t\Sigma),
    \end{equation}
    This also follows from~\cite[Theorem~3.10.2]{MR0838085}.
    To conclude, we use that
    \begin{align}
        \notag
        \mathcal M(t)
        &= \frac{1}{\sqrt{t}} \int_{0}^{\lfloor t \rfloor} v(q^x_s) \, \d W_s
        +\frac{1}{\sqrt{t}} \int_{\lfloor t \rfloor}^{t} v(q^x_s) \, \d W_s \\
        \label{eq:two_terms_lemma}
        &= \sqrt{\frac{\lfloor t \rfloor}{t}} M_{\lfloor t \rfloor}(1)
        +\frac{1}{\sqrt{t}} \int_{\lfloor t \rfloor}^{t} v(q^x_s) \, \d W_s.
    \end{align}
    The first term in~\eqref{eq:two_terms_lemma} converges to~$\mathcal N(0, \Sigma)$ by~\eqref{eq:pointwise_convergence} with $t = 1$.
    For the second term in~\eqref{eq:two_terms_lemma}, we have by Itô's lemma that
    \begin{align*}
        \expect \left[ \left( \frac{1}{\sqrt{t}} \int_{\lfloor t \rfloor}^{t} v(q^x_s) \, \d W_s \right)^2 \right] = \frac{1}{t}\expect \left[ \int_{\lfloor t \rfloor}^{t} |v(q^x_s)|^2 \, \d s\right] \leq \frac{t - \lfloor t \rfloor}{t}\|v\|^2_{L^{\infty}},
    \end{align*}
    so this term converges to 0 as~$t \to \infty$~in squared expectation,
    and thus also in probability.
    The statement then follows from another application of Slutsky's theorem.
\end{proof}
\begin{lemma}
    \label{lem:min_comp}
    Let~$(f_n)_{n\in\mathbb{N}}$ be the sequence of functions defined by~\eqref{eq:suite_fn}. 
    Then there exists a compact set~$K\subset \real$ independent of~$n$ such that
    $$
    \left(\argmin_{\alpha \in \real} f_n(a)\right)_{n\in\mathbb{N}}\subset K.
    $$
\end{lemma}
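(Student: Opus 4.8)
The plan is to combine the strict convexity of each $f_n$ with the pointwise limit identified in~\eqref{eq:lim_F_centered}. Write $f_\infty(a) = D_1 a + \frac{D_2}{2} a^2$ for the limit function and, for each $n$, let $a_n^*$ denote the minimizer of $f_n$. First I would collect the elementary structural facts. Since $F_{t_n}$ is strictly convex and coercive by~\cref{prop:differentiation} and the map $a \mapsto a/t_n$ is an affine bijection of~$\real$ (recall $t_n > 0$), each $f_n$ is strictly convex and coercive; hence $a_n^*$ exists and is unique, and $f_n(0) = 0$ by construction. I would also note that $D_2 > 0$ under~\cref{assump:fc_nonzero}: the function $\mathcal R$ solving $-\mathcal L \mathcal R = R \neq 0$ is non-constant, so $\nabla \mathcal R$ does not vanish identically, and since $\sigma\sigma^\top$ is positive definite this forces $\norm{\sigma^\top \nabla \mathcal R} > 0$; together with $\norm{S} > 0$ and $\norm{u} > 0$, the expression for~$D_2$ in~\cref{lemmaD} is positive. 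In particular $f_\infty$ is a quadratic with positive leading coefficient, so $f_\infty(a) \to +\infty$ as $\lvert a \rvert \to \infty$.

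The heart of the argument is a deterministic convexity observation. Fix $M > 0$ large enough that $f_\infty(M) > 0$ and $f_\infty(-M) > 0$, which is possible by coercivity of $f_\infty$. By the pointwise convergence~\eqref{eq:lim_F_centered} applied at $a = \pm M$, there is $N \in \nat$ such that $f_n(M) > 0$ and $f_n(-M) > 0$ for all $n \ge N$. I claim $a_n^* \in (-M, M)$ for every such $n$. Indeed, a convex function is non-increasing to the left of its argmin and non-decreasing to the right of it; so if $a_n^* \ge M$ then $0 \le M \le a_n^*$ gives $f_n(M) \le f_n(0) = 0$, contradicting $f_n(M) > 0$, and symmetrically $a_n^* \le -M$ would give $f_n(-M) \le f_n(0) = 0$, again a contradiction. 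Hence $a_n^* \in (-M, M)$ for all $n \ge N$.

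It remains to absorb the finitely many indices $n < N$. Each $a_n^*$ with $n < N$ is a finite real number, so $M' := \max\bigl\{ M, \lvert a_0^* \rvert, \dots, \lvert a_{N-1}^* \rvert \bigr\}$ is finite, and $K := [-M', M']$ is a compact set containing $a_n^*$ for every $n \in \nat$. Since $M$, $N$ and the $a_n^*$ for $n<N$ depend only on the fixed sequence $(t_n)_{n \in \nat}$ and not on the running index $n$, the set $K$ is independent of~$n$, as required. I do not expect a genuine obstacle in this argument; the only points requiring attention are the convexity/monotonicity step that confines the minimizer once the signs $f_n(0) = 0 < f_n(\pm M)$ are known, and the remark that $D_2 > 0$, which guarantees that a single cutoff $M$ works uniformly for all large $n$ because $f_\infty$ is coercive.
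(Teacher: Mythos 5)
Your proof is correct and rests on the same mechanism as the paper's: a convexity barrier argument that confines the minimizer of $f_n$ once its values at finitely many test points are controlled by the convergence $f_n \to f_\infty$ and the coercivity of the limiting quadratic (your check that $D_2>0$ is a worthwhile addition). The only cosmetic difference is that you anchor at the exact value $f_n(0)=0$ and use pointwise convergence at $\pm M$, whereas the paper compares $f_n$ at $a_\infty$ and $a_\infty\pm\delta$ using the locally uniform convergence already established via~\cite[Theorem 10.8]{rockafellar2015convex}; both are valid, and your explicit absorption of the finitely many initial indices is welcome.
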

\begin{proof}
    Defining $a_{\infty} = -\frac{D_1}{D_2}$,
    let us rewrite~$f_\infty$ as
    \[
        f_{\infty}(a) = \frac{D_2}{2}(a - a_{\infty})^2 + f^{\min}_{\infty}, \qquad f^{\rm min}_{\infty} = -\frac{D_1^2}{2D_2}.
    \]
    Fix $\delta > 0$, let $K_{\delta} = [a_{\infty} - \delta , a_{\infty} + \delta]$,
    and let $\varepsilon = \frac{D_2 \delta^2}{6}$.
    By uniform convergence on compact subsets,
    there exists $n_\varepsilon$ such that
    \begin{align*}
        \forall n \geq n_{\varepsilon}, \qquad
        \left\{
        \begin{aligned}
            &f_n(a_{\infty})   &\le f_{\infty}(a_{\infty}) + \varepsilon  = f^{\rm min}_{\infty} + \varepsilon = f^{\rm min}_{\infty} + \frac{D_2 \delta^2}{6},\\
            &f_n(a_{\infty}+\delta) &\ge f_{\infty}(a_{\infty}+\delta) - \varepsilon = \frac{D_2 \delta^2}{2} + f^{\rm min}_{\infty} - \varepsilon = f^{\rm min}_{\infty} + \frac{D_2 \delta^2}{3},\\
            &f_n(a_{\infty}-\delta) &\ge f_{\infty}(a_{\infty}-\delta) - \varepsilon = \frac{D_2 \delta^2}{2} + f^{\rm min}_{\infty} - \varepsilon = f^{\rm min}_{\infty} + \frac{D_2 \delta^2}{3}.
        \end{aligned}
        \right.
    \end{align*}
    Consequently, by convexity of $f_n$,
    it holds that~$\argmin_{\alpha \in \real} f_n(a) \in K_{\delta}$ for all $n \geq n_{\varepsilon}$.
    Since $\delta$ was arbitrary,
    this concludes the proof.
\end{proof}
\section*{Acknowledgments}
The authors thank Bettina Keller for stimulating discussions and Clément Guillot for helpful insights on the numerical part.
This project has received funding from the European Research Council (ERC) under the European Union’s Horizon 2020 research and innovation programme (project EMC2, grant agreement No 810367). 
We also acknowledge funding from the Agence Nationale de la Recherche, under grants ANR-21-CE40-0006 (SINEQ) and ANR-23-CE40-0027 (IPSO).
\ifsiamart
\bibliographystyle{siamplain}
\bibliography{main}
\else
\printbibliography
\fi
\end{document}